\documentclass[12pt]{article}

\usepackage{color,graphicx}
\usepackage{amsmath,amssymb,amsthm}
\usepackage{epsfig}\usepackage{graphicx}
\newtheorem{theorem}{Theorem}
\newtheorem{lemma}{Lemma}
\newtheorem{obs}{Observation}
\newtheorem{prop}{Proposition}

\theoremstyle{remark}
\newtheorem{ex}{Example}
\newtheorem{remark}{Remark}
\newtheorem{definition}{Definition}

\newcommand{\ie}[1]{#1}

\newcommand{\N}{\mathbb N}
\newcommand{\R}{\mathbb R}
\newcommand{\pair}[1]{\langle #1\rangle}

\date{}

\title{Goodman-Strauss theorem revisited}
\author{Nikolay Vereshchagin\\
  Moscow State University, HSE University,
  Yandex\thanks{This paper was prepared within the framework of the HSE University Basic Research Program.}}

\begin{document}
\maketitle

\begin{abstract}
The Goodman-Strauss theorem states that for ``almost every" substitution, 
the family of substitution tilings is sofic, that is, it can be defined by local rules 
for some decoration of tiles. The 
conditions on the substitution that guarantee the soficity 
are quite complicated. 

In this paper we propose a version of Goodman-Strauss theorem with simpler
conditions on the substitution.  
Although the conditions are quite restrictive,
we show that, in combination with two simple tricks (taking a sufficiently large power of the substitution and combining small
tiles into larger ones), 
our version of Goodman-Strauss theorem can also prove the soficity of the family of substitution tilings for ``almost every'' substitution. 

We also prove a similar theorem for the family of \emph{hierarchical} tilings associated with the given substitution. 
A tiling is called hierarchical if it has a composition under the substitution, such that this composition also has a composition, 
and so on, infinitely many times. Every substitution tiling is hierarchical, but the converse is not always true. 
Fernique and Ollinger 
formulated some conditions on the substitution that guarantee that the family of hierarchical tilings is sofic. 
However, their technique does not prove this statement under such general conditions as in their paper. 
In the present paper, we show that under the same assumptions, as for our version of the Goodman-Strauss theorem, 
their technique works. 
\end{abstract}

\section{Introduction}

\subsection{Substitutions and Substitution Tilings}

Assume that a  finite set of polygons $\alpha_1,\dots,\alpha_M$ is given. Some of them may be congruent; to distinguish them, we  
mark them with different colors. These polygons are called \emph{prototiles}, and \emph{tiles} are any shifts of prototiles.
Prototiles cannot be rotated or flipped.
A shift of $\alpha_i$ is called a tile \emph{of the form $\alpha_i$}.
A \emph{tiling} is any set of pairwise non-overlapping  tiles, that is, tiles having no common interior points. 
 A tiling is said to be \emph{side-to-side} 
if any two sides of its distinct tiles sharing a fragment of positive length coincide. 


In addition,
a \emph{substitution} $\sigma$ is given, that is, for each
polygon $\alpha_i$ a \emph{rule} $\alpha_i\to \mathcal{M}_i$ is given,
where $\mathcal{M}_i$ is some tiling of the polygon $\theta \alpha_i$.
Here $\theta>1$ is a real number  and multiplication by $\theta$
means stretching by $\theta$ times without rotation.
The tiling $\mathcal{M}_i$
is called 
 the \emph{decomposition} of $\alpha_i$
and is denoted by $\sigma \alpha_i$. The image of a side $a$ of $\alpha_i$ 
is denoted by $\sigma a$, those images 
are called \emph{macrosides}.
The tiles from $\mathcal{M}_i$ are called \emph{children}
of the tile $\alpha_i$, and the tile itself is their \emph{parent}.
Three examples of a substitution are shown in Fig.~\ref{f12}--\ref{f14}.
\begin{figure}[t]
\begin{center}
\includegraphics[scale=1]{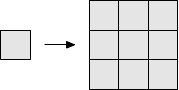}
\end{center}
\caption{First substitution}\label{f12}
\end{figure}
\begin{figure}[t]
\begin{center}
\includegraphics[scale=.6]{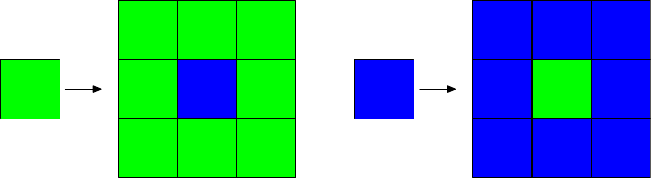}
\end{center}
\caption{Second substitution}\label{f13}
\end{figure}
\begin{figure}[t]
\begin{center}
\includegraphics[scale=.4]{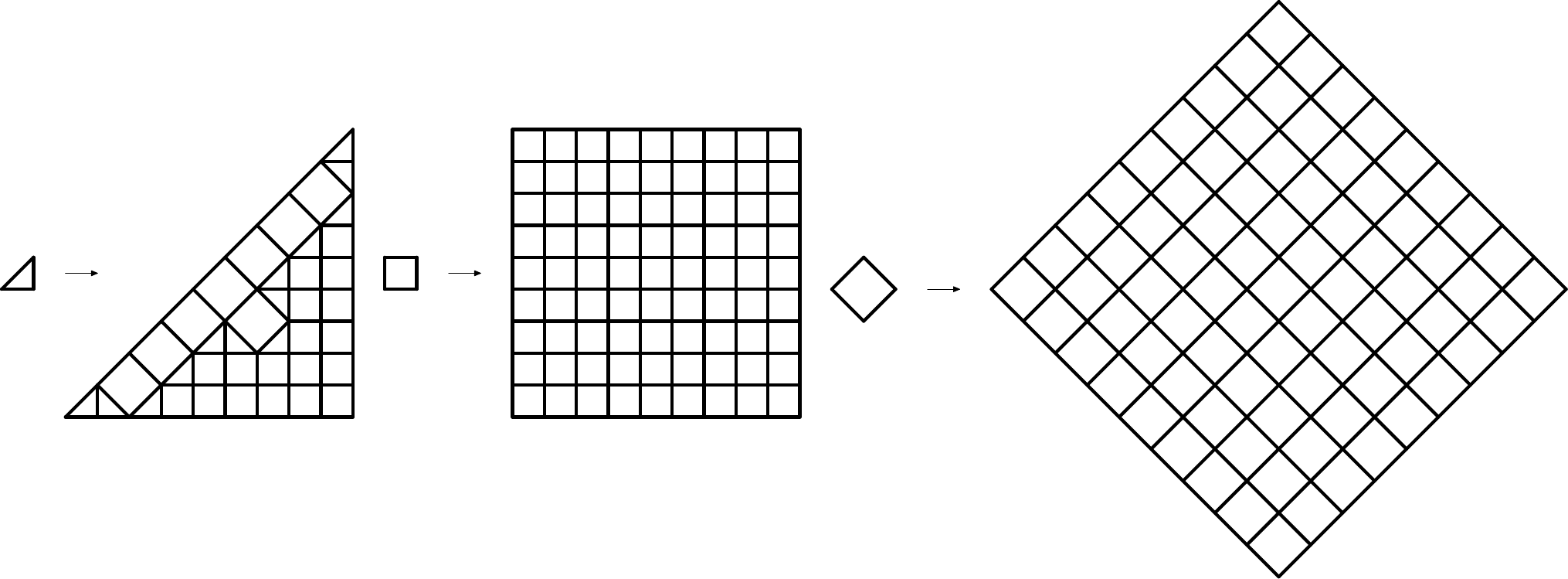}
\end{center}
\caption{Third substitution. The first prototile can be rotated by 90, 180, and 270 degrees.
Therefore, the total number of prototiles is 6.}\label{f14}
\end{figure}

The action of substitution on tilings is defined as follows: the
rules are applied to all tiles of the tiling simultaneously, i.e., the tile $\alpha_i$ is replaced by the tiling $\mathcal M_i$.
The resulting tiling is called the \emph{decomposition} of the original tiling.
The inverse operation is called the \emph{composition}, the composition of a tiling
may not exist and may not be unique.

\emph{Supertiles} are finite tilings obtained from prototiles by several 
decompositions. The number of decompositions is called the \emph{order} of the supertile.
For example, all macrotiles are supertiles of the first order.
See  Fig.~\ref{supertiles} for another example.
\begin{figure}[t]
\begin{center}
\includegraphics[scale=.8]{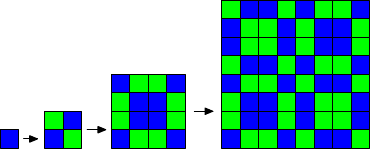}\hskip 2cm \includegraphics[scale=.8]{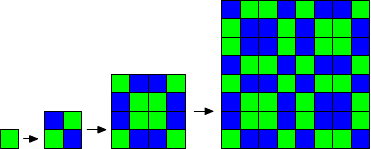}
\end{center}
\caption{The figure shows a substitution and supertiles of order 1, 2 and 3.}\label{supertiles}
\end{figure}
A tiling is called \emph{a substitution tiling} (for a given substitution)
if every its finite part is included in some supertile.

A family of tilings $\mathcal F$  is called   \emph{sofic} if 
one can color the sides of the polygons $\alpha_1,\dots,\alpha_M$, each prototile can be decorated in several ways, so that the following holds.
Call a tiling with decorated tiles \emph{proper} if it is side-to-side
and the colors of sides shared by two tiles match.
Consider the operation $\pi$ of erasing colors on tilings with decorated tiles. 
We say that a tiling with tiles from the set $\{\alpha_1,\dots,\alpha_M\}$ 
is a \emph{projection} of a tiling $\mathcal T'$ with decorated tiles
if $\mathcal T=\pi( \mathcal T')$.
Then a tiling  is in $\mathcal F$ 
if and only if it is a projection of a proper tiling with decorated tiles.

\subsection{Mozes' Theorem}
Mozes' theorem~\cite m states that under certain conditions on the substitution
the family of substitution tilings is sofic.
The assumptions on the substitution in Mozes' theorem are that all prototiles are squares of the same size 
and one more condition that we find difficult to state here.

\subsection{Goodman-Strauss Theorem}
Goodman-Strauss theorem generalizes Mozes' theorem. It
states the same thing (the soficity of
the family of substitution tilings) but under weaker conditions
on the substitution.
Here is how this theorem is formulated in~\cite{gs}:
\emph{Every substitution tiling of $\R^d$, $d > 1$, can be enforced with finite
matching rules, subject to a mild condition: the tiles are required to admit a set of ``hereditary edges''
such that the substitution tiling is ``sibling-edge-to-edge''}. In this quotation, the phrase
``Every substitution tiling of $\R^d$, $d > 1$, can be enforced with finite matching rules'' in our terminology means: ``Every family of substitution tilings is sofic''.
But the second part of the formulation, which talks about sufficient conditions for this,
is quite complicated, see Section 1.4 of~\cite{gs} on pp.  181--182.  
So we find it difficult to provide a precise formulation of the theorem.

\subsection{Fernique --- Ollinger Construction}

A tiling $\mathcal{T}$
is called \emph{hierarchical} (for a given substitution)
if there exists an infinite sequence of  side-to-side tilings
$\mathcal{T}_0=\mathcal{T},\mathcal{T}_1,\mathcal{T}_2,\dots$ in which for all $i$ the tiling $\mathcal{T}_{i+1}$ is a composition of $\mathcal{T}_i$.

Fernique --- Ollinger theorem states that, under some conditions on the given substitution, the set of hierarchical
tilings is sofic.\footnote{In fact, their statement is more general, they consider a broader class of substitutions
--- the so-called ``combinatorial substitutions".} They formulate those conditions explicitly, 
but they are not sufficient for their construction to work.
In Sections~\ref{sa2} and~\ref{sa3} we present two substitutions for which Fernique --- Ollinger construction does not work,
even though all their conditions are satisfied.

\subsection{Our contribution}
In one sentence, our contribution can be summed up as follows: we
improved Fernique --- Ollinger construction
to work under quite general  assumptions,
established how to modify the construction to prove the soficity of substitution tilings under the same assumptions,
and realized that, in combination with some simple tricks, the resulting theorem proves the soficity of families of substitution and
hierarchical tilings for almost every substitution.

In more detail, in this paper we do the following:

(a) We formulate
sufficient assumptions on the given substitution (conditions R0--R8 below) under which
the family of hierarchical tilings is sofic.
The conditions are stronger than those of Fernique --- Ollinger, but we have a complete proof,
unlike~\cite{fo}, where the proof is only sketched.
Moreover, the Fernique --- Ollinger construction does not work under the assumptions as in their paper.

(b) We show that under the same assumptions R0--R8 the family of 
 substitution tilings is sofic.

(c) Our assumptions  R0--R8 are quite restrictive. But we show
how to extend the theorem to ``almost any''
substitution using two simple tricks. For example, we prove the following generalization of Mozes’ theorem
(Theorem~\ref{th22} below):  \emph{If   all prototiles are squares of the same size
then the families of hierarchical and substitution tilings are
sofic.}


\emph{The structure of the paper.} 
To present our technique,  we first apply it to prove our generalization of Mozes' theorem (Section 2).
In Section 3, we formulate our assumptions  R0--R8.
In Section 4, we state and prove the main result. In Section 5 we  outline its scope of application.
In the Appendix we discuss the technique and explain why the construction of
Fernique --- Ollinger does not work under the assumptions as in their paper.

\section{A Generalization of Mozes' Theorem}
\begin{theorem}\label{th22}
Assume that all prototiles are squares of the same size. 
 Then (a) the family of hierarchical tilings is  sofic and (b) the family of  substitution tilings  is sofic.
\end{theorem}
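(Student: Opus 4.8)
The plan is to exhibit, by hand, a decoration of the unit squares that witnesses soficity, and to obtain part~(b) as a rigid refinement of part~(a). First I would normalise the data. Since each $\sigma\alpha_i$ is a tiling of the square $\theta\alpha_i$ by squares of the common side, and a square can be tiled by equal smaller axis-parallel squares only in the trivial grid fashion, $\theta$ is forced to be an integer $N\ge 2$ and $\sigma\alpha_i$ is simply the colouring of the $N\times N$ grid prescribed by the rule. A short observation then shows that every substitution tiling, and every tiling $\mathcal T_i$ occurring in a hierarchy, is grid-aligned up to a global shift: each finite patch lies in a supertile, supertiles are grid-aligned, and overlapping patches force one global grid; in particular all these tilings are automatically side-to-side. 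So from now on only the combinatorics of the colouring matters. This is also where the first announced trick is available: passing from $\sigma$ to a power $\sigma^m$ replaces $N$ by $N^m$ and enlarges the ``safety margins'' without changing either family of tilings, while the second trick (merging small tiles into larger ones) is not needed here because all tiles are already congruent.

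For part~(a) I would decorate a unit square $t$ with: its colour $c$; its position $(p,q)\in\{0,\dots,N-1\}^2$ inside its parent super-square; the colour $c'$ of that parent; and, for each of the four sides of $t$, a label (from a finite set, see the last paragraph) recording whether that side lies on the boundary of the parent and, if it does, a bounded certificate of how the corresponding macro-side is inherited upward, into the grandparent, great-grandparent, and so on. The local rules demand that $c$ equal the $(p,q)$-entry of $\sigma c'$; that two neighbours whose positions exhibit them as siblings of a parent of colour $c'$ be coloured and placed exactly as $\sigma c'$ prescribes; and that two neighbours that are \emph{not} siblings carry matching boundary labels, so that the macro-sides on which they sit are glued coherently at every order. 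I would then prove the two inclusions: a tiling carrying such a decoration admits a composition tower, reconstructed order by order from the positions and the boundary labels, hence is hierarchical; and conversely a hierarchical tiling is decorated by reading the positions and the inherited macro-sides off one of its composition towers. Hence the hierarchical tilings are exactly the projections of the proper decorated tilings, i.e.\ this family is sofic.

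For part~(b) the projections must single out the substitution tilings, which for equal squares form a \emph{proper} subfamily of the hierarchical ones: a ``fault'' tiling obtained by gluing, along a bi-infinite line that is a super-square boundary at \emph{every} order, two independent hierarchies is hierarchical but need not be a substitution tiling --- one can realise this already with a substitution on three colours. I would therefore add to the rules of~(a) the requirement that every super-square of every order be certified, again through a bounded amount of per-tile data, to sit at a genuine finite position inside a super-square one order higher, so that the ``parent pointers'' are globally consistent and no bi-infinite fault can survive; equivalently, each boundary label must come from an honest supertile of bounded order rather than merely be locally consistent. Checking that the projections of these more constrained proper tilings are exactly the substitution tilings then completes~(b).

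The main obstacle, and the place where the technique of this paper is really needed, is keeping the decoration alphabet finite: storing a square's position at every level of the hierarchy would require about $k$ labels for a square near the corner of an order-$k$ super-square, and $k$ is unbounded. The resolution uses that the decomposition of a square is again a perfect grid, so that the boundary of an order-$k$ super-square is a union of boundaries of order-$(k-1)$ super-squares with no new vertices (``hereditary edges''): the matching obligations along such boundaries can be discharged by a \emph{bounded} number of ``relay'' markings per tile, propagated to siblings and neighbours, rather than by an explicit unbounded address. Making this bookkeeping precise --- fixing the finite label set and checking that the rules of~(a) and of~(b) neither under- nor over-constrain --- is the one genuinely delicate step; the rest is elementary grid geometry. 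Alternatively one could verify that a suitable power of $\sigma$, after a regrouping of cells, satisfies the conditions R0--R8 and then invoke the general theorem of Section~4, but presenting the construction directly is what makes the method transparent.
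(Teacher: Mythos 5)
Your proposal correctly identifies the architecture (pass to a power of the substitution, record each square's position in its parent plus bounded certificates of how macro-sides are inherited upward, and for part~(b) add a local-realizability condition to kill fault tilings), but it stops exactly where the theorem actually lives. You yourself flag that ``making this bookkeeping precise \dots\ is the one genuinely delicate step,'' and that step is not a technicality: the paper's entire machinery --- the choice of a power $\tau^m$ satisfying the Cyclicity Requirement (Lemma~\ref{l6}), the net paths carrying green indices from the central tile to the main ports, the blue indices, and above all the Dichotomy Lemma --- exists precisely because the naive version of your plan fails. The failure mode is this: when you recompose a proper decorated tiling order by order, the sides that carry the upward-propagated (``global'') information do not know the type of the tile they sit on, so the composed tile's inherited contour can be inconsistent with its reconstructed type, producing a ``parent'' that is not itself a legal decorated tile; the hierarchy then breaks after finitely many compositions. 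The Appendix of the paper exhibits concrete substitutions (satisfying all the easy local conditions) where exactly this happens, which is why the Fernique--Ollinger sketch is incomplete. Your sentence ``a tiling carrying such a decoration admits a composition tower, reconstructed order by order'' asserts the hard part without an argument, and no amount of ``relay markings'' fixes it without something playing the role of the Cyclicity Requirement and the Dichotomy Lemma.

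For part~(b) your proposed extra condition is also not quite right as stated. Certifying that ``every super-square of every order sits at a genuine finite position inside a super-square one order higher'' is just hierarchicality restated; in a fault tiling the parent pointers \emph{are} globally consistent at every order, so this adds nothing. What the paper actually imposes is that every \emph{crown} (the configuration of tiles around a vertex) at every level of the composition tower be one that occurs in the interior of some supertile; Lemma~\ref{l22} then shows this forces every finite patch into a supertile. Your phrase ``each boundary label must come from an honest supertile of bounded order'' gestures at this but is not equivalent to it, and the ``equivalently'' is where a reader would get stuck. In short: the plan is the right plan, but the proof of the theorem is precisely the content you have deferred.
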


The rest of this section consists of the proof of this theorem. Let $\tau$ denote given substitution 
where all prototiles are squares of the same size.
Our plan is the following: we prove first that macrotiles for some power $\tau^m$ of $\tau$ 
has  certain property and then use this property to prove the statement.

\subsection{The First Step}
Fix a natural $m$ and  
let  $\sigma$ denote $\tau^m$. 
Every $\sigma$-macrotile $\mathcal  M_i$  is a 
grid of some size $k\times k$ that does not depend on $i$ (but depends on $m$).
A \emph{markup} of $\sigma$ is a number $l$ such that $1<l<k$.
Tiles from the union of $\sigma$-macrotiles $\mathcal{M}_1,\dots,\mathcal{M}_M$ will be called \emph{$\sigma$-types}.
We call the type in $\mathcal  M_i$ that is located in $l$th row and $l$th  column
the \emph{central $\sigma$-type} in $\mathcal  M_i$. 
\begin{center}
\includegraphics[scale=1.]{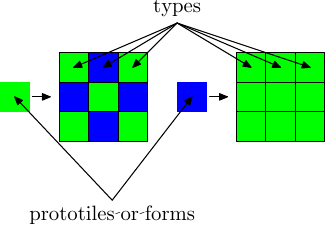}
\end{center}

\begin{definition}
For a central $\sigma$-type $t$ let $c_\sigma(t)$ denote the central type in the macrotile $\sigma t$;
the type $c_\sigma(t)$ depends only of the form of $t$.
We call a central type $t$ \emph{cyclic} if $c_\sigma(t)=t$.
\end{definition}
Here is the requirement for the markup mentioned above:
\begin{itemize}
\item \label{r9}
``Cyclicity Requirement''.   
\emph{For any central type $u$ we have $c_\sigma^{2}(u)=c_\sigma(u)$,
that is, the type $c_\sigma(u)$ is  cyclic.} See Figs.~\ref{f34} and~\ref{f1-5}. 
\end{itemize}
\begin{figure}
\begin{center}
\includegraphics[scale=1]{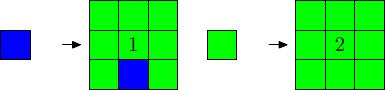}\hskip 2.5cm \includegraphics[scale= 1]{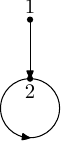}
\end{center}
\caption{An example of substitution for which there is an acyclic 
central type (type 1) but the Cyclicity Requirement is satisfied.  Central types are in the middle. 
On the right there is the graph of the function $c_\sigma(u)$.}\label{f34}
\end{figure}
\begin{figure}
  \begin{center}
    \includegraphics[scale=.6]{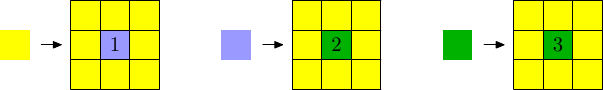}
  \end{center}
  \caption{A substitution that does not satisfy Cyclicity Requirement:
the central type 2 is acyclic and $c_\sigma(1)=2$.
}\label{f1-5}
\end{figure}

We start with the following
\begin{lemma}\label{l6}
For some $m$ 
there is a markup of $\sigma=\tau^m$ meeting
the Cyclicity Requirement.
\end{lemma}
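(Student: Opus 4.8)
The plan is to translate the Cyclicity Requirement into an elementary statement about iterating a self-map of the finite prototile set, and then to fulfil it by passing to a suitable power of $\tau$ equipped with a ``self-similar'' markup. Since all prototiles are unit squares, the scaling factor $\theta$ is an integer $\ge 2$, so for every $m$ the macrotile $\tau^m\alpha_i$ is a $k\times k$ grid of unit tiles with $k=\theta^m$; in particular $\tau^m$ admits a markup as soon as $\theta^m\ge 3$. Identify each $\sigma$-type with the index of its form. For any power $\rho=\tau^{m'}$ with a markup $l'$, define $\Phi_{\rho,l'}\colon\{1,\dots,M\}\to\{1,\dots,M\}$ by letting $\Phi_{\rho,l'}(j)$ be the index of the form occupying the $l'$th row and $l'$th column of $\rho\alpha_j$. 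For $j$ in the image of $\Phi_{\rho,l'}$ this is precisely $c_\rho$ applied to the corresponding central type, so the set of central-type forms is exactly $\mathrm{im}\,\Phi_{\rho,l'}$, and the Cyclicity Requirement for $\rho$ with markup $l'$ says that $\Phi_{\rho,l'}^2=\Phi_{\rho,l'}$ on $\mathrm{im}\,\Phi_{\rho,l'}$ --- equivalently, that $\Phi_{\rho,l'}^3=\Phi_{\rho,l'}^2$ as self-maps of $\{1,\dots,M\}$.

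Next I would fix any $m_0$ with $\theta^{m_0}\ge 3$ and a markup $l_0$ of $\rho:=\tau^{m_0}$, abbreviate $\Phi:=\Phi_{\rho,l_0}$ and $k:=\theta^{m_0}$, and exploit the compatibility of powers with markups. Writing $\sigma:=\rho^N=\tau^{m_0N}$, whose macrotile is a $k^N\times k^N$ grid obtained by $N$-fold iterated decomposition, I would take the self-similar markup $l:=1+(l_0-1)(1+k+\dots+k^{N-1})$; one checks directly that $1<l<k^N$. By the self-similar structure of the iterated grid, the tile in the $l$th row and $l$th column of $\sigma\alpha_j$ is obtained from $\alpha_j$ by repeatedly passing to the tile in the $l_0$th row and $l_0$th column of the one-step $\rho$-decomposition; hence an easy induction on $N$ gives $\Phi_{\sigma,l}=\Phi^N$.

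Finally I would invoke the fact that the iterate sequence $\Phi,\Phi^2,\Phi^3,\dots$ of a self-map of a finite set is eventually periodic: there are $a,p\ge 1$ with $\Phi^{n+p}=\Phi^n$ for every $n\ge a$. Choosing $N:=ap$ (any common multiple of $p$ that is at least $a$), we get $\Phi^{3N}=\Phi^{2N}$, since $2N\ge a$ and $3N-2N=N$ is a multiple of $p$; that is, $\Psi:=\Phi^N$ satisfies $\Psi^3=\Psi^2$. Putting $m:=m_0N$, the substitution $\sigma=\tau^m$ with markup $l$ has $\Phi_{\sigma,l}=\Phi^N=\Psi$, so by the equivalence of the first paragraph it meets the Cyclicity Requirement. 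This proves the lemma.

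The step I expect to need the most care is the geometric bookkeeping of the second paragraph: verifying precisely that the self-similar markup $l$ of $\tau^{m_0N}$ selects the $N$-fold iterate of the central branch, so that $\Phi_{\sigma,l}=\Phi^N$. Everything else is either the routine re-encoding of $c_\sigma$ as a self-map of $\{1,\dots,M\}$ (first paragraph) or the standard eventual-periodicity argument for maps of finite sets (third paragraph).
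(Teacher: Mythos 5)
Your construction is essentially the paper's: encode the central-tile assignment as a self-map $\Phi$ of the finite set of form indices, pass to a power at which the iterates of $\Phi$ stabilize, and use the self-similar markup (your closed form $l=1+(l_0-1)(1+k+\dots+k^{N-1})$ is exactly the paper's recursion $l_{j+1}=(l_j-1)k+2$ with $l_0=2$), so that $\Phi_{\tau^{m_0N},\,l}=\Phi^{N}$. The one flawed step is your translation of the Cyclicity Requirement in the first paragraph. A central type is the tile at position $(l',l')$ of \emph{each} macrotile $\rho\alpha_j$, one for every $j\in\{1,\dots,M\}$, and it is cyclic when it coincides with its own central child \emph{as a type} (not merely in form), i.e.\ when $\Phi(j)=j$. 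The requirement asks that $c_\rho(u)$ be cyclic for the central type $u$ of \emph{every} macrotile, which reads $\Phi(\Phi(j))=\Phi(j)$ for all $j\in\{1,\dots,M\}$, i.e.\ $\Phi^{2}=\Phi$ on the whole of $\{1,\dots,M\}$ --- not your weaker $\Phi^{2}=\Phi$ on $\mathrm{im}\,\Phi$, equivalently $\Phi^{3}=\Phi^{2}$. By identifying a central type with its form you quantify only over central-type forms and lose exactly the strength the paper uses later: in the proof of Proposition~\ref{th2} the deduction ``by Cyclicity Requirement $r$ itself is not central'' needs every element of $\mathrm{im}\,\Phi$ to be a fixed point, and in the central-cyclic case of Lemma~\ref{l10} ``cyclic'' must be equality of types so that the central child of $D$ has type $t$; with only $\Phi^{3}=\Phi^{2}$ (e.g.\ $\Phi\colon 1\mapsto 2\mapsto 3\mapsto 3$) these arguments can fail.

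Fortunately this does not sink your construction: with $N=ap$ you in fact have $\Phi^{2N}=\Phi^{N}$ (iterate $\Phi^{n+p}=\Phi^{n}$ starting from $n=N\ge a$), so $\Psi=\Phi^{N}$ is idempotent and your markup satisfies the requirement in the intended, stronger sense --- which is precisely what the paper does, choosing $m$ with $c_\tau^{2m}=c_\tau^{m}$. To repair the write-up, replace the claimed equivalence by ``Cyclicity Requirement $\iff \Phi_{\sigma,l}^{2}=\Phi_{\sigma,l}$'' and invoke $\Phi^{2N}=\Phi^{N}$ rather than $\Phi^{3N}=\Phi^{2N}$ in the last step.
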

\begin{proof}
We first demonstrate this for the substitution $\tau$ in Fig.~\ref{f1-5}.
Consider the square of this substitution (Fig.~\ref{f1-7}).
\begin{figure}
  \begin{center}
    \includegraphics[scale=1.1]{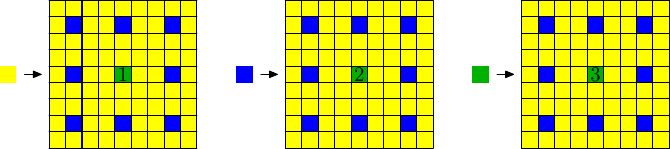}
  \end{center}
  \caption{The square of the substitution from Fig.~\ref{f1-5}.}\label{f1-7}
\end{figure}
Now all central types are mapped to the cyclic type 3.

In the general case, we 
first choose a power $\sigma$ of $\tau$ such that
each $\sigma$-macrotile  has at least 3 rows and 3 columns that is $k\ge 3$.
W.l.o.g. assume that $\tau$ itself has this property.
First try to set $\sigma=\tau$ and $c=2$. 
If the function $c_\tau$ satisfies Cyclicity Requirement, we are done.

Otherwise we claim that  
for some $m>0$ we have $c_\tau^{2m}(t)=c_\tau^m(t) $ for all central $\tau$-types $t$. In other words, $c_\tau^m(t)$ is a fixed point of the function $c_\tau^m$ for all $t$.
Indeed, the number of functions mapping central types to central types is finite. Therefore 
for some $n$ and $m>0$ we have $c_\tau^n=c_\tau^{n+m}$.
It follows that $c_\tau^{n+i}=c_\tau^{n+m+i}$ for all $i\ge 0$.
If $m\ge n$, then we set $i=m-n$ and obtain $c_\tau^{m}=c_\tau^{2m}$.
It remains to note that $m$ can be made arbitrarily large,
since the equality $c_\tau^n=c_\tau^{n+m}$ implies the equalities $c_\tau^n=c_\tau^{n+m}=c_\tau^{n+2m}=c_\tau^{n+3m}=\dots$.

Choose any $m>0$ with  $c_\tau^{2m}=c_\tau^m$ and let $\sigma=\tau^m$.
Then define the markup $l_j$ of $\tau^j$-macrotiles for $j=1,\dots,m$
recursively:\\ 
(1) $l_1=2$.\\
(2) The $i$th $\tau^{j+1}$-macrotile
  is obtained from $i$th $\tau^{j}$-macrotile 
 by replacing each tile by the corresponding  $\tau$-macrotile.
We choose the central row in $\tau^{j+1}$-macrotiles  
as the second row in the $\tau$-macrotile  
which replaces the central type of $\tau^{j}$-macrotiles.
In other words, $l_{j+1}=(l_j-1)k+2$.

What is the function $c_{\tau^m}$? It acts on central $\tau^m$-types
in the same way as $c_\tau^m$ acts on central  $\tau$-types. It follows that 
$c_\sigma(t)$ is a  cyclic type
for all central $\sigma$-types $t$. 
\end{proof}

By Lemma~\ref{l6}, to establish Theorem~\ref{th22},  it suffices to prove the following
\begin{theorem}\label{th0}
Assume that all prototiles for a substitution $\tau$ are squares of the same size and
some power $\sigma=\tau^m$ of $\tau$ has a markup meeting Cyclicity Requirement.
Then  (a) the family of hierarchical tilings for $\tau$ is  sofic and (b) the family of  substitution tilings for $\tau$ is sofic.
\end{theorem}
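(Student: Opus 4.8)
The plan is to build a decoration of the prototiles (i.e.\ a subshift of finite type, SFT) whose projection is exactly the set of hierarchical tilings, and then to handle the substitution tilings as a sub-case. Since the family of hierarchical tilings for $\tau$ coincides with the family of hierarchical tilings for $\sigma=\tau^m$ (a composition under $\tau$ iterated $m$ times is a composition under $\sigma$, and conversely any $\sigma$-hierarchy can be refined to a $\tau$-hierarchy because every $\sigma$-macrotile is a $\tau^m$-supertile with an internal $\tau$-hierarchy), it suffices to work with $\sigma$ and use the markup $l$ supplied by the Cyclicity Requirement. So from now on I would think only in terms of $\sigma$-macrotiles, which are honest $k\times k$ grids of squares.

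The core idea is the classical Mozes/Robinson ``self-simulation via skeletons'' trick, but organized around the markup $l$. Each square tile in a decorated tiling will carry: (i) its $\sigma$-type, i.e.\ its position $(r,c)$ inside the $\sigma$-macrotile it belongs to together with the form of that macrotile; (ii) this same datum one level up, i.e.\ the position and form of the macrotile of order two it sits in; and so on — but of course one cannot store infinitely much information, so instead the decoration stores the position/form at the current level together with \emph{enough markup-offset bits to reconstruct the position at the next level}, and the local rules force consistency between a tile and the tile that plays the role of its ``parent's representative''. Concretely, the tile in row $l$, column $l$ of each $\sigma$-macrotile — the central type — is designated as the \emph{center}; the local rules lay down, through the decoration, horizontal and vertical ``wires'' that connect each macrotile's center to the centers of the four neighbouring macrotiles of the same order, so that a macrotile of order $n+1$ is recognizable as a $k\times k$ arrangement of order-$n$ centers, and the order-$(n+1)$ center is the one whose relative position among those is again $(l,l)$. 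The Cyclicity Requirement is exactly what makes this bootstrapping terminate in a clean loop: $c_\sigma^2(u)=c_\sigma(u)$ means that after one step every center-type is cyclic, so the ``which type am I at level $n$'' data stabilizes after level $2$ and the finitely many local rules describing ``a cyclic center of form $t$ expands to a macrotile of form $\sigma t$ whose center is again of form $t$'' suffice to describe arbitrarily high levels. This is where the restrictiveness of the hypothesis pays off: without cyclicity one would need unboundedly much information to say ``which type sits at level $n$''.

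The key steps, in order, would be: (1) reduce from $\tau$ to $\sigma=\tau^m$ as above; (2) define the alphabet: for each prototile form, a decoration recording its grid-position $(r,c)$ in its level-$1$ macrotile, the form of that macrotile, a bit saying whether it is the level-$1$ center, and, for centers, the position $(l,l)$-offset data needed to place it in the level-$2$ macrotile, plus wire states (segments of horizontal/vertical connectors with endpoints at centers) and ``color'' markers on the four sides that encode which macroside of which level the side lies on; (3) write down the local (nearest-neighbour, side-to-side) matching rules: within-macrotile consistency of $(r,c)$ with neighbours, correct incrementation of positions across macrotile boundaries, wires going straight and only starting/ending at centers, the expansion rule forcing the pattern of centers visible inside a macrotile to be exactly the center-pattern of $\sigma t$, and the cyclic-center fixed-point rule; (4) prove soundness: any proper decorated tiling projects to a hierarchical tiling, by reading off the level-$n$ composition as ``group tiles by their level-$n$ macrotile, as witnessed by the wires/centers,'' and checking by induction on $n$ that each level is a well-defined side-to-side tiling that composes to the next; (5) prove completeness: any hierarchical tiling admits such a decoration, by using the given hierarchy $\mathcal T_0,\mathcal T_1,\dots$ to read off positions, centers and wires at every level, invoking cyclicity to see that only finitely many distinct center-data occur so the decoration is well-defined with a finite alphabet; (6) derive part (b): the family of substitution tilings is the subset of hierarchical tilings in which the hierarchy ``comes from prototiles'' — i.e.\ every finite patch lies in a supertile — and I would capture this by adding to the decoration a one-way ``order counter that can be born'' marker, or more simply by the standard argument that for primitive square substitutions the hierarchical tilings and the substitution tilings actually coincide up to the usual compactness argument, so (b) follows from (a); if they do not literally coincide, add local rules forbidding an ``infinite descending'' defect, which for these grid substitutions is expressible because a non-substitution hierarchical tiling must contain an infinite fault line, and fault lines are detectable by the wires.

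The main obstacle I expect is step (5) combined with the finiteness of the alphabet: one must be sure that the decoration at a tile only ever needs to record boundedly much information about arbitrarily high levels of the hierarchy, and this is precisely what the Cyclicity Requirement is designed to guarantee — after two applications of $c_\sigma$ the type data is periodic, the positions at every level are determined by the single offset $l$ via $l_{j+1}=(l_j-1)k+2$-type recursions reduced modulo the relevant period, and the wire structure at level $n$ is a scaled copy of the level-$1$ wire structure. Making this ``scaled copy / self-similarity'' precise and checking that the finitely many local rules really do force it at all levels simultaneously (the usual subtlety: ruling out ``spurious'' configurations that satisfy all local rules but are not part of any genuine hierarchy) is the technical heart; I would handle it exactly as in Mozes' and Goodman-Strauss's arguments, the Cyclicity Requirement being the clause that replaces their more complicated hereditary-edge/sibling-edge-to-edge hypotheses.
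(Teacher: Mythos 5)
There is a genuine gap, in fact two. For part (a), your construction is essentially the Mozes/Robinson skeleton (positions, centers, wires), but the entire difficulty of the theorem is the step you defer: proving that \emph{every} proper decorated tiling really carries a hierarchy, i.e.\ ruling out spurious configurations that satisfy all local rules without being compositions of compositions of \dots. You say you would handle this ``exactly as in Mozes' and Goodman-Strauss's arguments,'' but those arguments need precisely the extra hypotheses (Mozes' unstated condition, hereditary edges) that the present theorem does not have; the point of the Cyclicity Requirement is not, as you suggest, to make the ``which type am I at level $n$'' data stabilize so that the alphabet is finite (finiteness is never the issue: in the paper each level-$i$ datum is stored on its own $i$-ray of sides, so no tile ever carries unboundedly much). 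Cyclicity is used at one specific place in the hard direction: when a proper tiling is composed, the composed tile may have a \emph{central acyclic} type, for which the Dichotomy-type argument that certifies legality of composed tiles fails (the paper's Appendix, Section A.4, gives an explicit counterexample); cyclicity guarantees that the parent of such a tile has non-central type, so one can go one level up and apply the Dichotomy Lemma there. Your sketch contains no mechanism playing the role of the Dichotomy Lemma or of the blue indices (which prevent the inherited ``global'' data on a composed tile from being inconsistent with its position data), so as written the verification step (4) cannot be carried out.

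For part (b) your argument is wrong rather than merely incomplete. Primitivity is not assumed, and even for primitive square substitutions hierarchical tilings need not be substitution tilings; moreover one cannot ``add local rules forbidding an infinite fault line,'' because genuine substitution tilings can themselves contain lines that are boundaries of supertiles of every order. What distinguishes a substitution tiling among hierarchical ones is not the absence of such boundaries but the fact that the vertex configurations (crowns) across them, at every level of the hierarchy, are ones that occur inside supertiles. The paper's proof of (b) therefore (i) shows by a compactness/diagonal argument that every substitution tiling is hierarchical, (ii) proves that a hierarchical tiling all of whose levels have only allowed crowns is a substitution tiling, and (iii) enriches the decoration so that tiles at macrotile boundaries must agree on the number of an allowed crown at each boundary vertex, checking that this enrichment does not disturb the part (a) machinery. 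Some device of this kind (constraining, not forbidding, the infinite-order adjacencies) is indispensable, and it is missing from your step (6).
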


The rest of the section is the proof of this theorem.

\subsection{Proof  of Theorem~\ref{th0}(a)}

We will distinguish 
three kinds of sides of $\sigma$-types: 
\begin{enumerate}
\item  \emph{Inner sides}, these are sides 
whose both ends do not lie on the boundary of the macrotile. 
\item Sides that lie on the boundary of macrotile, these are called \emph{outer sides}. 
\item The remaining sides have one or two ends on the boundary but do not lie on it,
those sides are called \emph{border sides}.
\end{enumerate}
Border sides in Fig.~\ref{f66} are traversed by the green ring. 
\begin{figure}
\begin{center}
\includegraphics[scale=1.]{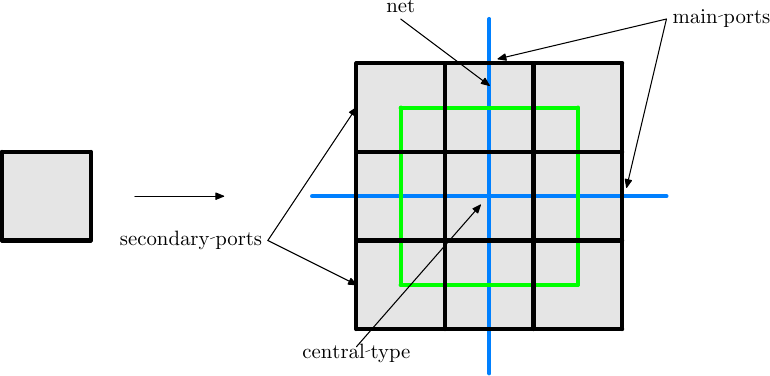}
\end{center}
\caption{Ports, central types and net paths.}\label{f66}
\end{figure}
The blue cross with the center in the central type  is called \emph{the net}.
It consists of four straight line segments $P_{\text{north}}$, $P_{\text{east}}$,
$P_{\text{south}}$, $P_{\text{west}}$
called \emph{net paths}.
The sides it traverses are called \emph{net sides}. 
All sides of the central type are thus net sides.
Outer net sides are called \emph{main ports} and the remaining 
outer sides are called \emph{secondary ports}.
Thus each net path connects a side of the central tile with a main port.
We will use terms ``north, ``east'',
``south'', ``west'' as \emph{names}
of sides of tiles. The term $z$th \emph{side of a tile} for $z\in\{\text{north, east, south, west}\}$ 
has the obvious meaning.

For each prototile $\alpha$ we define several its ``clones'' obtained by choosing colors
on the sides of $\alpha$. Then, on the tiles of the resulting set, we define a non-deterministic substitution $\sigma'$ such that:
\begin{itemize}
\item $\pi(\sigma'(A))=\sigma(\pi(A))$
for all decorated tiles $A$. Recall that $\pi$ denotes the operation of erasing colors from sides of tiles.\footnote{This is understood as follows:
for all macrotiles $\mathcal M$ that are images of $A$ under $\sigma'$ it holds 
$\pi(\mathcal M)=\sigma(\pi(A))$}
\item every $\tau$-hierarchical tiling 
  is a projection of a proper tiling with decorated tiles, and
\item every proper tiling $\mathcal T$ with decorated tiles  has a composition $\mathcal T'$ under $\sigma'$
which  is again a proper tiling; the tiling  $\mathcal T'$ has the following feature:
all tilings $\tau^{1} \pi(\mathcal T'), \dots, \tau^{m-1} \pi(\mathcal T')$
are side-to-side.
\end{itemize}
The last property guarantees that every proper tiling $\mathcal T$ with decorated tiles is $\sigma'$-hierarchical. Moreover,
for every proper tiling $\mathcal T$ its projection
 $\pi(\mathcal T)$  is $\tau$-hierarchical.
Indeed, let 
$$
\mathcal T_0=\mathcal T, \mathcal T_m, \mathcal T_{2m}, \dots
$$ 
be a sequence of proper tilings in which each tiling is a
composition of the previous one with respect to $\sigma'$.
Then in the sequence 
$$
\pi(\mathcal T_0),\tau^{m-1}\pi(\mathcal T_m),\dots,\tau^{1}\pi(\mathcal T_m),\pi(\mathcal T_m),
\tau^{m-1}\pi(\mathcal T_{2m}),\dots,\tau^{1}\pi(\mathcal T_{2m}),\pi(\mathcal T_{2m}),\dots
$$
each tilings is a $\tau$-composition of the preceding one. And due to the feature of $\mathcal T'$ all tilings in
this sequence are side-to-side.

\subsubsection{Decoration of prototiles: the general plan}
On each side of a
tile there will be three indices,
\emph{red, blue and green}, the triple of these indices constitutes the
color of the side. One more index will be written in the middle of the tile,
we will call it \emph{central}. 
This index does not affect the connection of tiles,
so we will remove it later. All indices will range through some finite sets,
which will be clear from the further presentation.
In  Fig.~\ref{f1} the decoration is shown for the first substitution assuming that $m=1$ and hence $\sigma=\tau$.   
\begin{figure}
\begin{center}
\includegraphics[scale=.9]{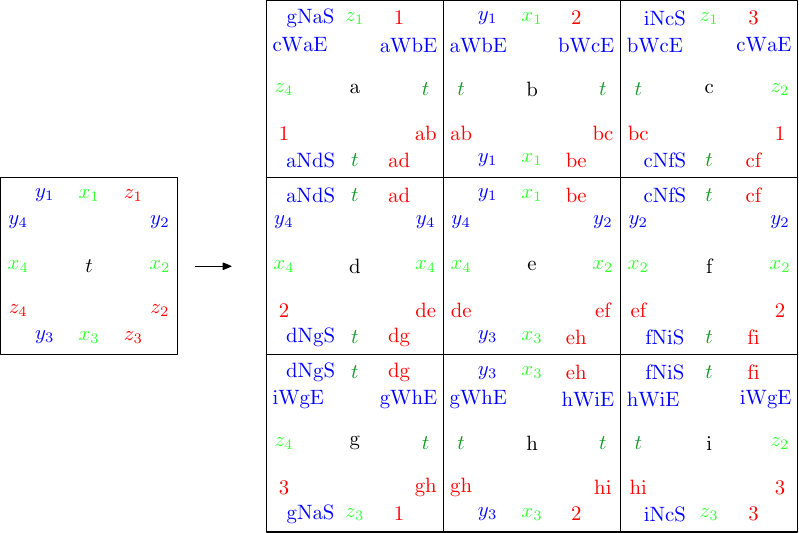}
\end{center}
\caption{
On the right there are 9 tiles assembled into a $\sigma$-macrotile where 
$\sigma=\tau$ is the first substitution. On the left there is 
the composition of that macrotile.
Types are represented by Latin letters.  2-letter strings  
represent pairs of types and 4-letter strings represent 
quadruples. Variable $t$ ranges over types
and variables $x_i,y_i,r_i$ range over  some finite sets.}\label{f1}
\end{figure}


The central index of a tile indicates in which
$\sigma$-macrotile this tile can be located, and at what place in the macrotile,
when partitioning a proper tiling into $\sigma$-macrotiles.
Therefore, we will also call this index the \emph{type}
of the decorated tile. Red indices force tiles of a proper tiling
to assemble into $\sigma$-macrotiles.
To this end, the red index on any inner or border  side of a tile $A$ consists of the pair
$\langle $the type of $A$, the type of the neighbor of $A$ on this side in the macrotile$\rangle$.
Red indices ensure also that
these macrotiles
connect macroside-to-macroside:
on an outer side,
the red index includes  the number of that side
in the order of traversing this macroside from left to right.

The tuple of red indices of a tile is called its
\emph{red contour}.
From the red contour of a tile we can compute its
 central index and the other way around.
Thus the red contour  has the same information 
as the central index. 

Why are red indices not enough?  Some tiles of each macrotile of a proper tiling
must carry information
about the type of the tile obtained from that macrotile via  composition. 
Similarly in each order-2 supertile there must be tiles carrying information about the type of the tile obtained from that supertile tile via the 2-fold composition. And so on.
This information will be called  \emph{global}, it  
is represented 
by green indices of some tiles. 
Those tiles are shown in Fig.~\ref{fgr}.
\begin{figure}
\begin{center}
\includegraphics[scale=.25]{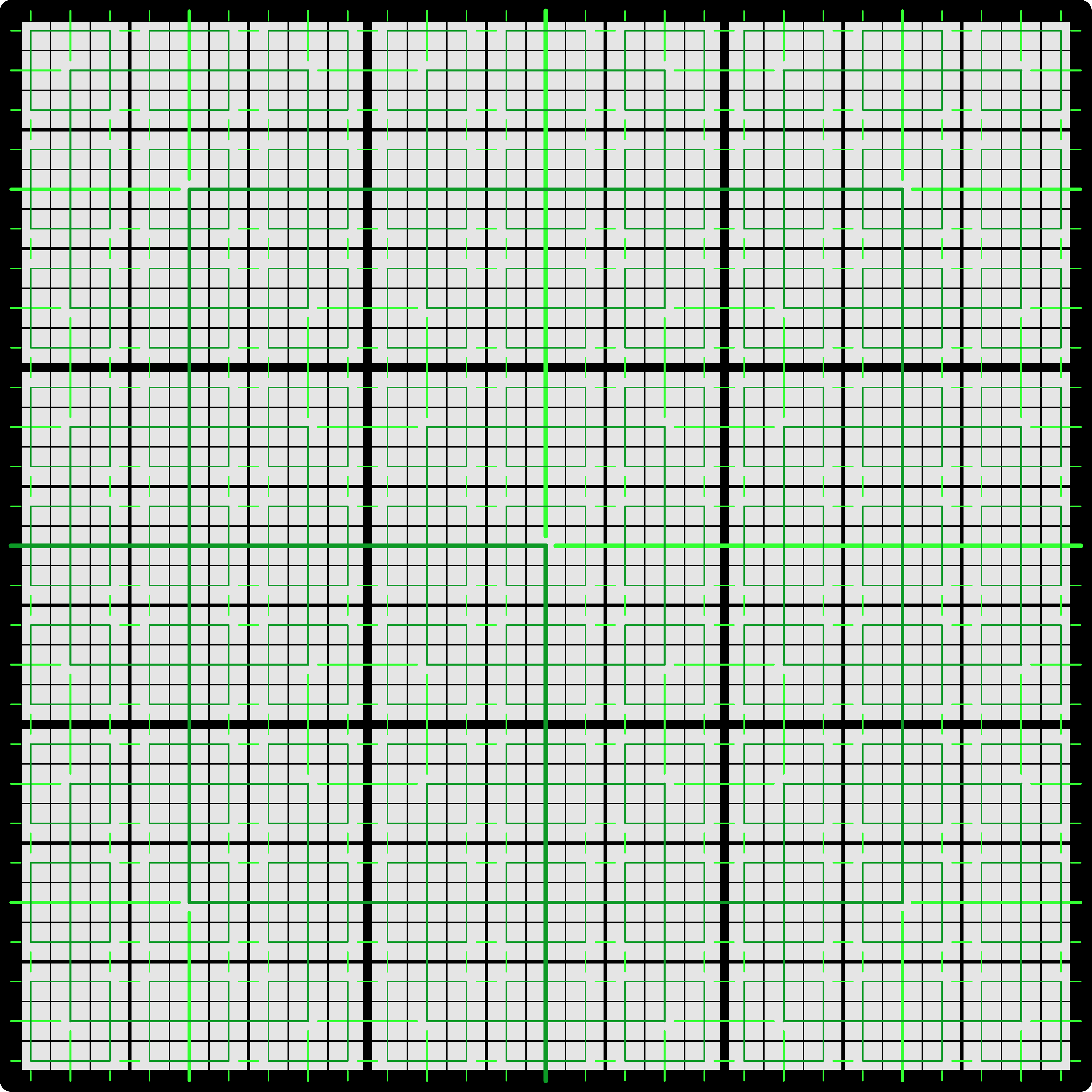}
\end{center}
\caption{The picture shows an order-3 supertile composed of 
$27\cdot 27$ tiles. Sides traversed by the same continuous green line segment have identical green indices.
Sides traversed by light green straight lines store the red indices of tiles obtained via compositions.
Sides traversed by dark green rings store the types of tiles obtained via compositions.}\label{fgr}
\end{figure}

More specifically, let $S_i$ be a supertile of order $i$ in a proper tiling.
It has 8 sequences of sides called the \emph{$i$-rays}. 
In Fig.~\ref{fgr} sides from 
the $i$-rays are crossed  by
light-green straight line segments of length $3^{i-1}$,
the length of a segment is defined as the number of sides it crosses.
The green index on all sides of each $i$-ray 
is equal to a red index of the tile $\sigma^{-i}S_i$. 
Actually, 4 $i$-rays would suffice, as each tile has 4 red indices,
but eight $i$-rays make the construction more symmetric.
Essentially the same information is stored in green indices 
of sides crossed by the dark green \emph{$i$-ring}:
the green index of all those sides is  equal
to the type of $\sigma^{-i}S_i$. The dark green
$i$-ring consists of four   straight line segments,
each of length $2\cdot 3^{i-1}$. 
For each $i$-ray, 
the dark green $i$-ring crosses a side that belongs to the $i$-ray. This 
allows to make  the type
of $\sigma^{-i}S_i$ consistent 
with red indices of $\sigma^{-i}S_i$. Each $i$-ray
is continued by a similar $i$-ray of the adjacent 
supertile, which allows to make each red index 
of $\sigma^{-i}S_i$ match the red index on 
the adjacent side.

It may seem that this is enough, and blue indices are not needed.
However, this is not the case. 
The problem is that sides whose green indices carry global
information have no information
about the type of  tiles they belong to. This may result
in that the composition of a proper tiling contains a tile
whose green contour is inconsistent with its  red 
contour.
For simple substitutions where each tile is replaced by a $3\times3$
grid this cannot happen. However, for more complex ones,
for instance, 
for the substitutions
where square prototiles are replaced by the $5\times5$ grid,
it can.

To overcome this problem we have to add more information in green indices.
It is to convenient to consider this extra information as another index called the 
``blue index''.
Almost each blue index of a tile identifies its type.
The only exceptions from this rule are blue
indices on the sides crossed by blue straight line segments
of non-unit length in
Fig.~\ref{fgr1}, where blue indices carry the information 
about blue indices of tiles obtained via compositions.
\begin{figure}
\begin{center}
\includegraphics[scale=.8]{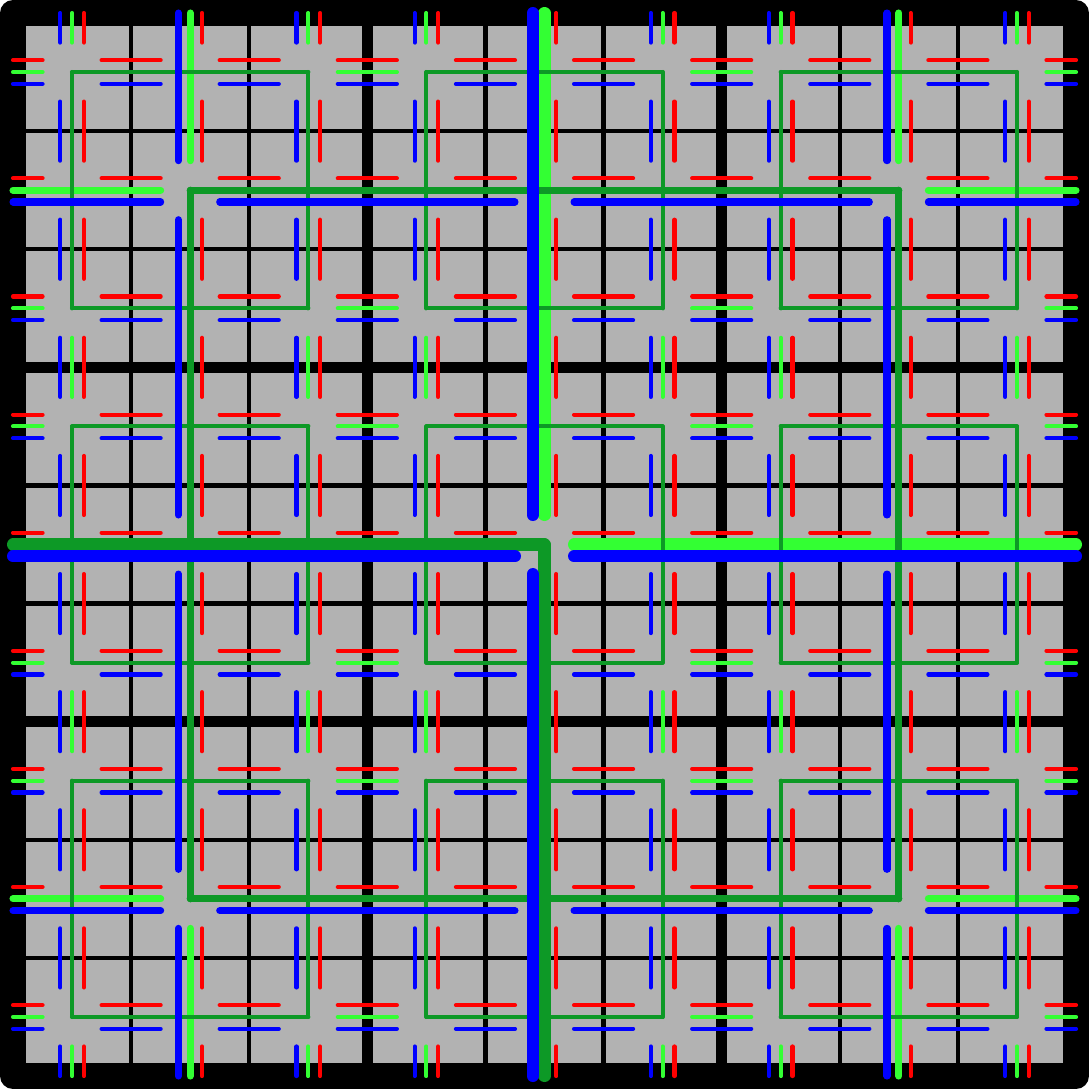}
\end{center}
\caption{The picture shows an order-2 supertile composed of 
$9\cdot 9$ tiles. 
Sides traversed by every blue line segment have the same blue indices.
}\label{fgr1}
\end{figure}
Fortunately, those exceptions do not ruin the construction.

\subsubsection{Tile decoration: the formal definition}
\label{s43}

\newcommand{\cld}{\text{Ch}}

Now let us move on to constructing
the set of decorated tiles. First,
we define the substitution $\sigma'$ on decorated tiles.
In general, it will be non-deterministic due to the fact that
the secondary ports can be decorated in several ways.

If  a decorated tile $A$ is obtained from a prototile
$\alpha$ by choosing indices,
we will say that $A$ is \emph{of the form} $\alpha$.
For each decorated tile
$A$ of the form $\alpha_i$ and for each type
$s\in \mathcal{M}_i$, we
define a set of decorated tiles $\cld_s(A)$, whose members are called
\emph{the children of $A$ of type $s$}.
The tiles of the sets $\cld_s(A)$ for all $s\in \mathcal{M}_i$ will be used to
construct  decompositions of $A$ under the substitution $\sigma'$.
If the type $s$ has no secondary ports,
then this set will be a singleton, otherwise it may
contain several tiles due to the possibility
of choosing blue indices.

Here is how the set $\cld_s(A)$ is defined (see Fig.~\ref{f1}):
\begin{itemize}
\item
  If $B\in\cld_s(A)$ then the central index of $B$ is $s$, in the sequel this index is called \emph{the type of} $B$.

The indices on the side $b$ of a tile $B$ from $\cld_s(A)$ are defined as follows:

\item Red indices:
\begin{itemize}
\item \label{redind} If $b$ is an inner side,
  then denote by $r$ the type of the
  tile from $\mathcal{M}_i$ that lies in $\mathcal{M}_i$ on the other side of $b$.
Then the red index of $B$ on $b$
is equal to the ordered pair $\pair{s,u}$ or $\pair{u,s}$
depending on whether $s$ is to the left or right of $b$.
If the side $b$ is horizontal, then the upper tile is considered to be the left one.

\item
The red index on each outer side $b$
of a tile $B$ from macrotile $\tau^m A$ is defined as the tuple  $\pair{n_1,\dots, n_m}$ 
where 
$n_i$ is the number of the side in $\tau^{i} A$, from left to right 
along the superside $\tau^{i} a$, which $b$ belongs to. 
In Fig.~\ref{f-8}  we have shown red indices on the 
north side  of a $\tau^{2}$-macrotile, where $\tau$ is the substitution of our first example (a square is substituted with a 3 by 3 grid).
\begin{figure}
  \begin{center}
    \includegraphics[scale=1]{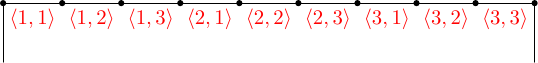}
  \end{center}
  \caption{Red indices on the north side of a $\tau^{2}$-macrotile, where $\tau$ is the substitution of our first example (a square is substituted with a 3 by 3 grid). }\label{f-8}
\end{figure}
\end{itemize}
We call a decorated tile $B$ \emph{normal} if its red indices are determined by its type according to this rule.
For example, all tiles on the right in Fig.~\ref{f1} are normal.

\item Green indices:
\begin{itemize}
\item
If $b$ belongs to the path $P_z$, where $z\in\{\text{north, east, south, west}\}$, then the green index of tile $B$ on side
$b$ is equal to the green index on $z$th side of $A$.
\item
If $b$ is a border side, then the green index of tile $B$ on side $b$ is equal to the type of $A$.
Therefore, we will call this index of $B$ \emph{the parent index of} $B$.
\item
If $b$ is a secondary port on the macroside $\sigma a$,
then the green index of $B$ on side $b$
is equal to the \emph{red} index on side $a$ of $A$.
\item
In the remaining case  $b$ is an inner non-net side. Then
its green index is zero.
\end{itemize}

\item Blue indices:
\begin{itemize}
\item
Similarly to green indices, if $b$ belongs to the path $P_z$, then the blue
index of tile $B$ on side
$b$ is equal to the blue index on $z$th side of $A$.
\item

If $b$ is a non-outer non-net side and is $z$th side if $B$,
then the blue index on it is the quadruple $\pair{s,z,r,u}$ 
or $\pair{r,u,s,z}$ depending on whether $s$ is to the left or right of this side.
Here $r$ denotes  the type of the
tile $C$ from $\mathcal{M}_i$ that lies on the other side of side $b$
and $u$ denotes its name in $C$.

\item 
Finally, if $b$ is a secondary port and is $z$th side of $B$, then similarly to the previous case the blue index on side $b$ 
can be any admissible quadruple $\pair{s,z,r,u}$ 
or $\pair{r,u,s,z}$ depending on whether $s$ is to the left or right of this side.
 We call a quadruple $\pair{r,u,s,z}$ \emph{admissible}
if $r$ is to the left of its $u$th side and 
there is a shift $r'$ of $r$ that 
does not overlap with  $s$ and such that $z$th side of $s$ coincides with $u$th side of $r'$. Note 
that blue indices on non-outer non-net sides are admissible quadruples as well.
Since $r$ can be chosen in several ways, the set $\cld_s(A)$ can have several tiles.
\end{itemize}
\end{itemize}

\begin{definition}
If $B\in\cld_s(A)$ then we call $A$ a \emph{parent} of $B$. 
\end{definition}
\begin{remark}
A tile can have several parents or no parents at all. In the former case all its parents
are of the same form. If $B$ is a border tile, then, moreover, all its parents
are of the same type.
All tiles that have a parent are normal.
\end{remark} 

\begin{definition}
If $B\in\cld_s(A)$ then
blue and green indices of net sides, green indices of border sides
and of secondary ports in tile $B$
are borrowed from $A$. These indices
are called \emph{borrowed},
and the corresponding index of $A$ is called the \emph{source} of that borrowed index. Non-borrowed indices are called \emph{native}.
\end{definition}
It follows from the definition that each index on each side $a$ of a
decorated tile $A$
is borrowed by some outer side of some tile in $\cld_s(A)$
for some $s$ depending on the type of tile $A$, on $a$ and the color of the index.

\begin{definition}
Let a decorated tile $A$ of the form $\alpha_i$ be given.
For each type $s$ from $\mathcal{M}_i$,  choose some tile $B_s$ from $\cld_s(A)$ and replace
in $\mathcal{M}_i$ the type $s$ with the tile $B_s$. Then
we obtain a proper tiling. 
Tilings obtained in this way are called
\emph{decompositions of $A$ under the substitution} $\sigma'$.
\end{definition}

For the first substitution, a decorated macrotile is shown
in Fig.~\ref{f1} on the right, and the tile $A$ itself is drawn on the left.
In this example, we could make the 
substitution $\sigma'$ deterministic, that is, make all sets $\cld_s(A)$
singletons.
This is because we have only one prototile,
so we know which macrotile should be the neighbor on each side. However, in general, we cannot make all $\cld_s(A)$ singletons, and we
need non-deterministic substitution.

\begin{obs}\label{rem2}
Let a type $s$ from a $\sigma$-macrotile $\mathcal{M}_i$ be fixed. 
Then the set $\cld_s(A)$ 
depends only on the blue-green\footnote{The blue-green index on a side of a tile
is defined as the pair consisting of
its blue and green indices.} index
on $z$th side of  $A$ provided the net path $P_z$ passes through $s$
and on the type of 
$A$ and its red indices provided 
$s$ is a border type. 
\end{obs}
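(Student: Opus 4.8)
The plan is to unwind the definition of $\cld_s(A)$ from Section~\ref{s43}, going through it side by side and index colour by index colour, and to record for every piece of data what part of $A$ it actually depends on. Throughout we keep the type $s\in\mathcal M_i$ fixed; in particular $A$ is of the form $\alpha_i$, since otherwise $\cld_s(A)$ is not defined. The first remark I would make is that whether a side $b$ of a prospective child $B$ of type $s$ is inner, an outer main or secondary port, a border side, or a net side lying on a given path $P_z$, is a purely geometric matter, determined by the position of $s$ inside $\mathcal M_i$, hence by $s$ alone and not by $A$. So the case distinctions made in the definition of $\cld_s(A)$ are governed by $s$, and the cases are mutually exclusive (as they must be for the definition to be well posed).

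Then I would treat each colour in turn. \emph{Red indices.} By the two bullets defining them, the red index of $B$ on an inner side is the ordered pair assembled from $s$ and the type of its neighbour in $\mathcal M_i$, and on an outer side it is the tuple $\pair{n_1,\dots,n_m}$ recording the positions of $b$ along the supersides of $\tau^1 A,\dots,\tau^m A$; both are functions of $s$ only. Hence every $B\in\cld_s(A)$ has the same red contour, determined by $s$. \emph{Green and blue indices on net sides.} On a side $b$ lying on a net path $P_z$ (this includes the main ports), the green and blue indices of $B$ are copied verbatim from the $z$th side of $A$, so they are functions of the blue-green index on the $z$th side of $A$; and a child of type $s$ has such a side exactly when $P_z$ passes through $s$. \emph{The remaining green and blue indices.} On an inner non-net side the green index is $0$ and the blue index is a quadruple read off from $\mathcal M_i$, so both depend on $s$ only. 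Border sides and secondary ports occur in $B$ only when $s$ is a border type; on a border side the green index is the type of $A$ and the blue index is again a quadruple determined by $s$ and $\mathcal M_i$; on a secondary port lying on the macroside $\sigma a$ the green index equals the red index on side $a$ of $A$, and the blue index ranges over the set of admissible quadruples $\pair{s,z,r,u}$, a set depending on $s$ and the name $z$ alone.

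Collecting these observations proves the claim: the form $\alpha_i$ of $A$ and the red contour of each child are pinned down by $s$; every green or blue index of a child is either a constant, or a function of $s$, or copied from the blue-green index on some side $z$ of $A$ with $P_z\ni s$, or — only when $s$ is a border type — equal to the type of $A$ or to a red index of $A$; and the only non-determinism in $\cld_s(A)$ is the choice of the admissible blue quadruples on the secondary ports, over a set depending on $s$ alone. Hence $\cld_s(A)$ is a function of the blue-green indices of $A$ on those sides $z$ for which $P_z$ passes through $s$, together with the type of $A$ and its red indices in the case that $s$ is a border type. I expect the only real work to be the bookkeeping: checking that the case analysis in the definition has been covered exhaustively and that each case's dependence has been identified correctly. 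There is no genuine mathematical difficulty here; the ``main obstacle'' is merely not to overlook a case.
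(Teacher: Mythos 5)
Your proof is correct and follows essentially the same route as the paper: the paper's own argument is just the compressed observation that only the borrowed indices of a child depend on $A$ (net-side blue-green indices always, plus the type and red indices of $A$ when $s$ is a border type), which is exactly what your case-by-case unwinding of the definition of $\cld_s(A)$ establishes.
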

\begin{proof}
Indeed, to compute tiles $B$ from $\cld_s(A)$, in addition to $s$, only the borrowed indices of $B$ are needed. 
If $s$ is not a border tile, then only the indices on net sides are borrowed. 
Otherwise, the type and red indices of tile $A$ are also borrowed, 
which become the parent index and green indices of secondary ports of $B$. 
\end{proof}

\subsubsection{Legal Tiles}\label{slt}

Now we will define our set of decorated tiles by imposing some
constraints on the decoration of the
 prototiles. Tiles that satisfy 
those constraints will be called
\emph{legal}.
For example, we want the red indices to force
decorated tiles to assemble into macrotiles, so
all tiles in our set must be normal. 
 For each specific initial substitution, we can define legal tiles explicitly,
but since we want our construction to be general, we will use
a different approach.

It is clear that any legal tile must have a parent, and moreover, a
legal parent.
Indeed, a proper tiling with legal tiles
must be composable, so any legal tile must be included in some
decorated macrotile whose composition is legal.
Therefore, we need to remove all decorated tiles that have no parents.
After that, we will have to remove
tiles whose parents were all removed.
And so on. Each new removal can increase the number of
tiles all of whose parents were removed. More or less, we will call a decorated tile legal if it has a parent, which in turn
has a parent, and so on, infinitely many times.

But unfortunately this is not enough. For example, for the first 
substitution, any normal
 type-\texttt e tile is its own parent.
So it will never be removed. But there are too many such
tiles, and they yield parasite tilings.
So we will impose another requirement.

\begin{definition}
We call a decorated tile $A$ \emph{legal} if
there is an infinite sequence of decorated tiles
$A_0=A,A_1,A_2,\dots$ with the following properties:
\begin{itemize}
\item $A_{i+1}$ is a parent of $A_{i}$ for all $i$,
\item
Consider some borrowed index $I_i$
in $A_{i}$ and its source $I_{i+1}$ in $A_{i+1}$. The latter can also
be borrowed, then we consider its source  $I_{i+2}$ in $A_{i+2}$ and so on.
If the sequence $I_{i}, I_{i+1},I_{i+2}, \dots$ ends at a native index,
then we call the latter \emph{the origin} of all indices in the sequence.
Otherwise, we say that the original index \emph{has no origin}.
It is required that for all $i$ all indices in $A_{i}$ without an
origin are zero.

\end{itemize}
\end{definition}

\subsubsection{Properties of legal tiles}
To get used to legal tiles,
let us formulate some of their simple properties:

\renewcommand{\labelenumi}{S\theenumi:}
\begin{enumerate}
\item Every legal tile is normal.
\item Every legal tile has a legal parent --- this is the second term
$A_1$  of the
 ancestor sequence that witnesses legality.
\label{s1}
\item Every child of every legal tile is legal as well.\label{s3}
\item Green indices on different border sides of a legal tile
coincide.
\label{s2}
\item Any blue index of a legal tile is either an admissible quadruple or zero.\label{s5}
\item Blue-green indices on net sides of a legal tile coincide.
\label{s6}
\end{enumerate}

The most important
property of legal tiles is the so-called
\emph{Dichotomy Lemma}. Reading its rather complicated statement and proof
can be postponed until it becomes clear why it is needed.

To state the lemma, we need some new notions.
Let tiles $A$ and $B$ have the same form.
We say that   $A,B$ are  \emph{similar on side }$z\in\{\text{north, east, south, west}\}$ if $A$ and $B$ 
have the same blue-green index on $z$th side.
We say that $A,B$ are \emph{similar} if they are similar on all their  sides.

\begin{lemma}[Dichotomy Lemma]\label{l10}
Let a legal tile $D$ and a central cyclic or non-central type $t$
of the same form as $D$ be given.
Then the following dichotomy holds:
either $D$ is similar to some legal tile of type $t$,
or there is $z\in\{\text{north, east, south, west}\}$ such that
$D$ is not similar to \emph{any} legal tile of type $t$ on $z$th side.
{\rm Reformulation:} if for each $z$  the tile $D$ is similar to
some legal tile $C_z$ of type $t$ on $z$th side, then $D$ is similar to 
some legal tile $C$ of type $t$ (on all sides).
\end{lemma}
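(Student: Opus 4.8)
The plan is to analyze what information determines a legal tile and how that information is distributed among its sides, then show that the blue-green indices on the four sides, when each is individually realizable by a legal tile of type $t$, together force the existence of a single legal tile of type $t$ carrying all four. The key structural fact to exploit is that a legal tile is normal (property S1), so its red contour, and hence its type (and parent's form), is rigid; the only genuine freedom is in the blue and green indices. Thus ``$D$ similar to a legal tile $C_z$ of type $t$ on side $z$'' says that the blue-green index appearing on the $z$th side of $D$ is one that can legally appear on the $z$th side of a type-$t$ tile. The claim is that these four local compatibility conditions are independent and jointly sufficient.

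First I would unwind legality via the ancestor sequence $A_0, A_1, \dots$ and use Observation~\ref{rem2}: once a type $s$ is fixed inside a macrotile, the possible children $\cld_s(A)$ depend only on the blue-green index of $A$ on the unique net-path side $P_z$ through $s$ (plus, if $s$ is a border type, the type and red indices of $A$, which are determined by normality). I would combine this with the remark on borrowed indices: every index on every side of $A$ is the source of exactly one borrowed index on an outer side of some child, and conversely each borrowed index of a child has a well-defined source in the parent. This gives a bijective bookkeeping: the full decoration of a legal tile $C$ of type $t$ is reconstructed side-by-side from (i) the native indices created by the substitution rule applied to $t$, which depend only on $t$, and (ii) the borrowed indices, which trace back through the ancestor chain to origins. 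The cyclicity hypothesis on $t$ (when $t$ is central) is what guarantees the ancestor chain can be taken to be the constant chain $t, t, t, \dots$ on net sides — this is exactly where we need $c_\sigma(t) = t$ rather than merely $t$ being in the image — while for non-central $t$ no such issue arises because non-central types have no children that are themselves non-central in the relevant sense, so their ancestor chains can be chosen freely.

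The heart of the argument is then a gluing step. Given legal witnesses $C_{\text{north}}, C_{\text{east}}, C_{\text{south}}, C_{\text{west}}$, each of type $t$ and of the same form as $D$, with $C_z$ agreeing with $D$ on side $z$, I would construct $C$ by specifying its blue-green index on side $z$ to be that of $C_z$ (equivalently of $D$) for each of the four $z$, and keeping the type $t$ and the red contour forced by normality. It remains to produce an ancestor sequence witnessing that this $C$ is legal. For each $z$ I have an ancestor sequence $A^{(z)}_0 = C_z, A^{(z)}_1, \dots$; because the side-$z$ data of $C$ matches that of $C_z$, and because (by Observation~\ref{rem2} and the borrowed-index bookkeeping) what propagates up the $z$th net path depends only on the blue-green index on that side, I can splice these four chains together — using for the side-$z$ portion of each ancestor the data coming from the chain $A^{(z)}_\bullet$. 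One must check that the four chains are compatible where they meet, i.e. at the parent's type and red contour: but since all $C_z$ have the same form as $D$ and are normal, they all share the same parent form and red contour at each level (this uses that cyclicity makes the central-type ancestry eventually constant, and the Remark after the legality definition that border tiles' parents all have the same type). Then the spliced sequence witnesses legality of $C$, and $C$ is similar to $D$ on all sides by construction.

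The main obstacle I anticipate is precisely this compatibility of the four ancestor chains: a priori the parent $A^{(z)}_1$ of $C_z$ might ``know'' about sides other than $z$ in ways that conflict across the four choices — for instance, a border side of $C_z$ carries the parent's type as its green index (property S2 and the ``parent index''), and if $s = t$ is a border type the red indices of the parent enter too. Resolving this requires showing that the portions of the parent relevant to distinct net paths are genuinely disjoint (the net paths $P_{\text{north}}, \dots, P_{\text{west}}$ only share the central type, and each outer side lies on exactly one net path or is a secondary port whose data is derived), and that the shared central data is forced by $t$ via cyclicity, hence common to all four chains. I would isolate this as the technical core, proving it by the careful side-classification (inner / outer / border; net / non-net) already set up in the construction, and I expect the remaining steps to be routine verification against the definition of legality.
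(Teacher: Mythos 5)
Your central move --- given witnesses $C_{\text{north}},\dots,C_{\text{west}}$, build a tile $C$ of type $t$ carrying the blue-green contour of $D$ and certify its legality by splicing the four ancestor chains of the $C_z$ --- does not work, and it is not the paper's argument. Legality requires a \emph{single} coherent infinite ancestor chain: at each level the tile has one parent, and that one parent supplies \emph{all} of the borrowed (net-side) indices simultaneously, while the blue indices on the non-net sides of a legal type-$t$ tile are native and are forced by $t$ itself (they are admissible quadruples naming $t$, its side, and its neighbour's type). So the data on different sides of a legal tile is strongly correlated, not ``independent and jointly sufficient'' as you assert; there is no freedom to import the side-$z$ history from a separate witness $C_z$ for each $z$, and the four chains cannot in general be merged into one. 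Indeed, the paper's Appendix (Sections A.3 and A.4) exhibits substitutions where a legal tile $D$ is similar on every side to \emph{some} legal type-$t$ tile and yet to \emph{no} legal type-$t$ tile on all sides at once; these examples violate only the $2/3$ Requirement, or concern a central acyclic $t$. Your argument never invokes the admissibility structure of blue indices, the fact that indices on the net sides of one tile coincide (property S6), the exclusion of quadruples $\pair{s,z,t,z}$, or the cyclic/non-central hypothesis in any load-bearing way, so it could not distinguish the true cases from these counterexamples; the step you call ``routine verification'' is exactly where the statement can fail.

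The paper's proof goes in the opposite direction: it analyzes the ancestor chain of $D$ itself. If $t$ is central cyclic, the central child of $D$ is a legal type-$t$ tile with $D$'s contour, and the first alternative holds outright. If $t$ is non-central, one looks at the first non-central ancestor $D_k$ (if any), of type $s$. If $s=t$, the contour of $D_k$ passes unchanged to $D$ and the first alternative holds. If $s\neq t$, Observations~\ref{o-diff1} and~\ref{o-diff2} produce a \emph{single} side $z$ on which $D$'s blue index (which encodes $s$, or is a shared net-path index occurring on two sides of $D_k$) cannot equal the blue index of any legal type-$t$ tile --- this is where the rectangular geometry (no admissible $\pair{s,z,t,z}$, the Outer Sides property) and, in the general theorem, the $2/3$ Requirement are used. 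If all ancestors are central, every blue index of $D$ is zero, while a non-central $t$ has a non-net side whose blue index in any legal tile is a nonzero admissible quadruple, so again the second alternative holds. In short, the dichotomy is proved by tracing where $D$'s indices originate, not by gluing hypothetical witnesses; your proposal is missing the mechanism (type-encoding blue indices plus the counting/geometric requirements) that makes the dichotomy true, and its key gluing step is false in general.
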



\begin{proof}
Assume first that $t$ is a central cyclic type. We claim that then the first alternative holds.
As $t$ is a cyclic type, it is its own central child.
Since $D$ and  $t$ have the same form, their central children are of the same type,
      \begin{center}
        \includegraphics[scale=1]{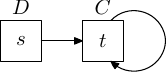}
      \end{center}
thus the central child $C$ of $D$ is of type $t$. 
The 
blue-green contour of $C$ is borrowed from  $D$.  The tile $C$ is legal because it is a child of 
the legal tile $D$; recall property S\ref{s3}. 

Now assume that $t$ is a non-central type.
Let 
$$
\dots\to D_2\to D_1\to D_0=D
$$ 
be a sequence of tiles that witnesses legality of $D$. 
Consider two cases.  

\emph{Case 1:} some tile $D_k$ has a non-central type, say type $s$.
Consider the minimal  such $k$. 
If $s=t$, then the first alternative holds, since the blue and green
indices of the tile $D_k$
pass unchanged to $D$.
We claim  that otherwise the second option holds. To prove the claim consider two cases.

\emph{Case 1a:} Assume first that  there is $z\in\{\text{north, east, south, west}\}$
such that $z$th side is not in the net in both types $s$ and $t$. Then the statement
follows from the following
\begin{obs}\label{o-diff1}
 If  $z$th side is not in the net in both types $s$ and $t$ and $s\ne t$,
then 
the blue index on $z$th side of any legal type-$t$ tile 
is different from that of any legal type-$s$ tile.
\end{obs}
\begin{proof} 
Let $y_s$ and $y_t$ denote those blue indices. 
W.l.o.g. assume that $s$ is to the left of its $z$th side.
Distinguish   the following cases:
\begin{itemize} 
\item $t$ is to the left of its $z$th side. Then   $y_s=\pair{s,z,*,*}$ and $y_t=\pair{t,z,*,*}$ hence $y_s\ne y_t$.
\item $t$ is to the right of its $z$th side. Then $y_s\ne y_t$ unless
$y_s=y_t=\pair{s,z,t,z}$. And the latter is impossible, since no admissible quadruple has the form  $\pair{s,z,t,z}$.
\end{itemize}
Note that in this proof we used only once that the tiles $t,s$ have rectangular form.
That assumption was used to prove that \emph{no admissible quadruple has the form  $\pair{s,z,t,z}$}.
\end{proof}

\emph{Case 1b:} Otherwise there is no $z\in\{\text{north, east, south, west}\}$
such that $z$th side is not in the net in both types $s$ and $t$. 
As types $s,t$ are non-central, they both  have at most 
two net sides. Hence $s$ has two net sides such that  both eponymous
sides in $t$ 
are not in the net. Let $a,b$ denote their names. Blue indices on $a$th and $b$th sides 
of $D_k$ (and hence of $D$) coincide. Denote them by $y$.
The statement
follows from the following

\begin{obs}\label{o-diff2}
Assume that $a\ne b$, both $a$th and $b$th sides are non-net sides in type $t$
and $y$ is a blue index of  a legal tile. 
Then there is a side $z\in\{a,b\}$ such that $y$  is different from the blue index on $z$th side 
 of any 
legal tile of type $t$.
\end{obs}
\begin{proof}
Every blue index of a legal tile is either zero, or an 
admissible quadruple (property~S\ref{s5} of legal tiles).
If $y=0$ then the blue index on both $a$th and $b$th sides of any type-$t$
tile is different from $y$ and we can let $z=a$ or $z=b$.  
Otherwise  $y$ is an admissible quadruple $\pair{u,c,v,d}$. 

W.l.o.g. assume that $t$ is to the left of its $a$th side. Distinguish now the following cases:
\begin{itemize}
\item $c\ne a$ or $u\ne t$; then we can let $z=a$,
\item $c=a,u=t$ and $t$ is to the left of its $b$th side; then we can let $z=b$, since $c=a\ne b$.\footnote{Actually, this case cannot happen, as $a$ and $b$ being non-net sides are parallel. We analyze this case, since in the sequel we
will consider non rectangular tiles and we want the argument be valid also
in that case.}
\item $t$ is to the right of its $b$th side and $d\ne b$ or $v\ne t$; then we can let $z=b$.

In the remaining case
$t$ is to the right of its $b$th side and $y=\pair{t,a,t,b}$. Since $y$ is an admissible quadruple,
there is a shift $t'$ of $t$ whose $a$th side coincide with $b$th side of $t$.
Thus $a$th and $b$th sides of $t$ are parallel.
At least one of them is a non-outer side.
\item If $a$ is a non-outer side then we can let $z=a$, as 
it cannot happen that the blue index on $a$th side of a type-$t$ tile $B$ has the form $\pair{*,*,t,b}$ ---
to the right of $B$ there is a tile of type different from $t$.
\item Otherwise $b$ is a non-outer side and we can  let $z=b$.\qed
\end{itemize}
\renewcommand{\qed}{}
\end{proof}

Later we will need this observation for non-rectangular tiles.
Note that in its proof we used only once that the tile $t$ has rectangular form.
That assumption was used to establish the following property:
\begin{enumerate}
\item[]``Outer Sides Requirement'': \emph{No type $t$ has  parallel \emph{outer} sides $a,b$ 
\label{outersides} such that $t$ is on the left of $a$ and on the right of $b$}.
\end{enumerate}

\emph{Case 2:} all $D_i$ are of central type. We claim that then the second alternative holds.
Indeed, in this case all blue indices of $D$ are zero. On the other hand,
since $t$ is a non-central type, it has a non-net side, which thus carries a non-zero blue index.
%
\end{proof}

\subsubsection{Proof of the correctness of the construction}
We need to prove that a tiling with undecorated tiles
is hierarchical if and only if its tiles can be decorated so that
the result is a proper tiling with legal tiles.
First, we prove the easy direction: it is possible
to properly color any hierarchical tiling.

\begin{prop}\label{th1}
Every $\tau$-hierarchical  tiling with undecorated tiles is a projection of
a proper tiling with legal tiles.
\end{prop}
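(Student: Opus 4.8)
The plan is to start from a $\tau$-hierarchical tiling $\mathcal T$ with witnessing sequence $\mathcal T=\mathcal T_0,\mathcal T_1,\mathcal T_2,\dots$ where each $\mathcal T_{i+1}$ is a $\tau$-composition of $\mathcal T_i$. First I would pass to the subsequence indexed by multiples of $m$, so that $\mathcal T_{(j+1)m}$ is a $\sigma$-composition of $\mathcal T_{jm}$; this lets me talk about $\sigma$-supertiles rather than $\tau$-supertiles, and the intermediate $\tau^j$-compositions remain available to check the side-to-side condition from the ``feature'' listed earlier. Now every tile $A$ of $\mathcal T$ lies, for each order $i\ge 0$, inside a unique $\sigma$-supertile $S_i(A)$ of order $i$ of the hierarchical structure, and I record for $A$ its coordinates/type within each $S_i(A)$.

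Next I would define the decoration of each tile $A$ of $\mathcal T$ by reading off exactly the data the construction prescribes. The central (type) index of $A$ is its position inside $S_1(A)$; this determines the red contour by the normality rule (which is consistent since $\mathcal T$ really is assembled into $\sigma$-macrotiles and these connect macroside-to-macroside in the hierarchy). The green indices on the $i$-rays and $i$-rings of $S_i(A)$ are set to be the red contour, resp.\ the type, of the tile $\sigma^{-i}S_i(A)$ — which exists precisely because $\mathcal T$ is hierarchical; one checks these assignments are well-defined (the $i$-ray of $S_i$ continues into the $i$-ray of the adjacent supertile because both are glued along a common macroside, and the $i$-ring meets each $i$-ray so the type is consistent with the red contour of $\sigma^{-i}S_i$). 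The blue indices are assigned by the same dichotomy: on the blue segments of non-unit length they carry the blue data of $\sigma^{-i}S_i$, and everywhere else they are the quadruple $\pair{s,z,r,u}$ determined by the actual local neighbour of $A$ in $\mathcal T$ (which is admissible because that neighbour genuinely fits). Green indices on inner non-net sides and blue/green on the remaining positions are zero as prescribed.

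Then I would verify the two required properties. For \emph{properness}: adjacent tiles of $\mathcal T$ share a side, and by the way I assigned indices (red from the common macroside structure, blue from the actual local fit, green from the shared rays/rings that run across the shared side) the colours match; side-to-side-ness of $\mathcal T$ comes from side-to-side-ness of all the $\tau^j\pi(\mathcal T_{jm})$ in the original hierarchy. For \emph{legality}: given $A$, I build the ancestor sequence $A_0=A, A_1, A_2,\dots$ by letting $A_{i+1}$ be the decoration of the tile $\sigma^{-1}S_i(A)$ — equivalently, $A_i$ is the decorated tile occupying the slot of $A$'s order-$i$ supertile one level up. By construction $A_{i+1}$ is a parent of $A_i$ (the decomposition rule $\sigma'$ applied to $A_{i+1}$ produces a macrotile in which the slot of $A_i$ carries exactly the indices I assigned — the net/border/secondary indices are borrowed from $A_{i+1}$, matching how I defined green indices along rays and rings and parent indices on border sides). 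Finally I must check the ``no index without an origin is nonzero'' clause: every nonzero index of $A_i$ was assigned either as a native quadruple (an origin at that level) or as data borrowed from $A_{i+1}$; tracing the borrowing chain upward, it terminates at the first level where the corresponding segment/ray becomes native, so it has an origin — the only indices that never acquire an origin are the inner non-net and other ``remaining-case'' positions, and those I set to zero.

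The main obstacle I expect is the legality verification, specifically checking that the upward ancestor chain I construct really satisfies the origin condition for \emph{every} index simultaneously, and that $A_{i+1}$ is genuinely a parent of $A_i$ in the formal sense (i.e.\ that the borrowed green/blue indices along an $i$-ray of $S_i(A)$ agree with what the decomposition rule $\sigma'$ forces when applied to the decorated $\sigma^{-1}S_i(A)$). This is a bookkeeping argument, but it is delicate because the green indices on a single side can be simultaneously part of an $i$-ray for several values of $i$ and of the net of the enclosing macrotile, so I have to argue consistency of all these roles at once; the hierarchical structure is exactly what guarantees it, since all the relevant data (types and red contours of $\sigma^{-i}S_i$ for all $i$) are honestly present in $\mathcal T$'s composition tower.
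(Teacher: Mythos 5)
Your construction is essentially the paper's: pass to the subsequence $\mathcal T_0,\mathcal T_m,\mathcal T_{2m},\dots$, decorate every level of the tower by the rules of the decomposition $\sigma'$ (choosing the free components on secondary ports to agree with the actual neighbour), take as ancestor sequence of a tile the decorated tiles occupying its slots up the tower, and check properness and legality. So the route is the same; the problem is in your treatment of the origin clause of legality.

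You claim that every borrowing chain ``terminates at the first level where the corresponding segment/ray becomes native, so it has an origin,'' and that the only origin-less indices are the inner non-net (``remaining-case'') positions. Both claims are wrong. Inner non-net sides carry \emph{native} indices (green natively zero, blue a native quadruple), so they are not the origin-less ones. The genuinely origin-less indices are blue-green indices on net sides of a tile that lies on a net path of its macrotile at \emph{every} level of the tower -- for instance a tile that is the central tile of its macrotile, whose composed tile is again central, and so on forever (this happens, e.g., in a fixed-point tiling with a cyclic central type; the paper's Case 2 of the Dichotomy Lemma exists precisely because such infinite chains occur). For those sides your recipe gives no well-defined value: they lie on no $i$-ray for any finite $i$, so ``the red contour of $\sigma^{-i}S_i$'' is never reached. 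The definition of legality demands that every index without an origin be zero, so the decoration must explicitly set all borrowed indices without an origin to zero (as the paper does), and the properness check then closes because two adjacent origin-less indices are both zero, while adjacent borrowed indices \emph{with} origins coincide because their sources, and hence their origins, are adjacent and native. With that correction -- and keeping your observation that the red indices on outer sides match because the intermediate tilings $\mathcal T_{im+1},\dots,\mathcal T_{(i+1)m}$ are side-to-side -- your argument matches the paper's proof.
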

\begin{proof}
  Fix a hierarchical tiling $\mathcal T$ and an infinite sequence $\mathcal T_0=\mathcal T, \mathcal T_1,\mathcal T_2,\dots$,
  in which each tiling is a $\tau$-composition of the previous one. 
  Consider its subsequence $\mathcal T_0, \mathcal T_m,\mathcal T_{2m},\dots$.
  In each tiling  $\mathcal T_{im}$ group the tiles into $\sigma$-macrotiles according
  to $\mathcal T_{(i+1)m}$. Since $\mathcal T_{(i+1)m}$ is side-to-side,
  in $\mathcal T_{im}$ every $\sigma$-macroside is adjacent to a $\sigma$-macroside. 
 By the choice of the location of central tiles, each main port is adjacent to a main port in $\mathcal T_{im}$.
  
  We want to properly color all tilings of our sequence so that for the decorated tilings
  $\mathcal T'_0, \mathcal T'_{m},\mathcal T'_{2m},\dots$, each tiling is a $\sigma'$-composition of the previous one. 
  To do this, we color each macrotile $\mathcal {M}$ from
the tiling $\mathcal T_{im}$ as described in Section~\ref{s43}. We choose the undefined components of blue indices of secondary ports so that they are
  the same for adjacent tiles. 
  The borrowed indices of the tiles from $\mathcal {M}$ are yet undefined since we have not yet completely decorated the parent $D$ of $\mathcal {M}$, which belongs to $\mathcal T_{(i+1)m}$.
  We set the borrowed indices that have an origin to their origin, and set the borrowed indices without an origin to zero.
  
By construction, this decoration has the following properties.
 
  (a) \emph{All tilings of the chain $\mathcal T'_0, \mathcal T'_m,\mathcal T'_{2m},\dots$ are proper.}
  Indeed, all indices on non-outer sides of each macrotile are the same on adjacent tiles by construction. We claim that
  native indices on outer sides are also the same. Indeed, 
  red indices on outer sides of tiles from $\mathcal T_{im}$ coincide, since all the tilings $\mathcal T_{im+1},\dots,\mathcal T_{(i+1)m}$ are side-to-side.
  Native blue indices coincide because of the choice of their undetermined components. 
   Borrowed indices on the outer sides  that have an origin
  are equal to that origin. If two outer sides from different macrosides are  adjacent, then their sources are adjacent as well.
  Therefore, their origins are adjacent. And at the origin, the indexes are native, so they coincide.
  Finally, borrowed indices without an origin are equal to zero, so they coincide.
 
  (b) \emph{All the resulting decorated tiles are legal.} Indeed, it follows from the construction that after decoration,
  each macrotile is a $\sigma'$-decomposition of its parent. Consider an arbitrary decorated tile $A$ from any of the tilings of the chain
  $\mathcal T'_0, \mathcal T'_{m},\mathcal T'_{2m},\dots$.
  Consider its ancestors
  $$
  \dots\to A_{3m}\to A_{2m}\to A_m\to A_0=A
  $$
  By construction, all indices without an origin in tilings from this sequence are zero. Therefore, the tile $A$ is legal.
\end{proof}

Let us now prove that any proper tiling with legal tiles has a $\sigma'$-composition
that is proper and consists of legal tiles only.

\begin{prop} \label{th2}
  Every proper tiling $\mathcal T$ with legal tiles has a unique proper $\sigma'$-composition $\mathcal T'$ consisting of legal tiles.
  The tiling  $\mathcal T'$ has the following feature:
all tilings $\tau^{1} \pi(\mathcal T'), \dots, \tau^{m-1} \pi(\mathcal T')$
are side-to-side.
\end{prop}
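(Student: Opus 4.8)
\emph{Overall plan.} I would perform the composition in two stages: a purely combinatorial stage that recovers, from the red indices alone, both the underlying tiling $\pi(\mathcal T')$ and the grouping of $\mathcal T$ into $\sigma$-macrotiles, and a second stage that attaches decorations to the composed tiles and checks legality. Every legal tile is normal (property~S1), so its red indices carry, on each inner or border side, the pair of types of the two tiles adjacent on that side inside the macrotile, and, on each outer side, the $m$-tuple of positions of the side along the macroside at the levels $\tau^1,\dots,\tau^m$. Since colours on shared sides of a proper tiling match, these constraints propagate and force a \emph{unique} partition of $\mathcal T$ into $\sigma$-macrotiles meeting macroside-to-macroside; one checks that the partition ``closes up'', because the type of a tile fixes both the macrotile containing it and its position inside it, so neighbouring tiles cannot impose contradictory types. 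Moreover, the $i$-th components of the outer red indices, once matching is taken into account, force the tilings $\tau^1\pi(\mathcal T'),\dots,\tau^{m-1}\pi(\mathcal T')$ (and $\pi(\mathcal T')$ itself) to be side-to-side. Applying $\sigma^{-1}$ to the macrotiles yields $\pi(\mathcal T')$, and, by normality, the red contour of a macrotile determines the red contour — hence the type $t^\ast$ — of the composed tile.

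\emph{Defining the decoration and proving that $\mathcal M$ is a $\sigma'$-decomposition.} Fix a macrotile $\mathcal M$ with composition $D$ of type $t^\ast$. By~S6 the blue-green index is constant along each arm of the net of $\mathcal M$, and all four arms agree at the central tile; write $\beta$ for this common value. Since the tiles lying on the net path $P_z$ borrow their net-side blue-green index from the $z$-th side of their parent, and that index is $\beta$ for every $z$, the blue-green index of $D$ on each of its four sides must equal $\beta$; this, together with $t^\ast$, \emph{defines} $D$ as a decorated tile (and makes $\mathcal T'$ unique, once $D$ is shown legal). It then remains to prove (i) $D$ is legal and (ii) $\mathcal M$ is a $\sigma'$-decomposition of $D$, i.e.\ every tile of $\mathcal M$ at a position $s$ lies in $\cld_s(D)$. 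For~(ii) I would use Observation~\ref{rem2}: for a tile off the net that is not of border type nothing about $D$ is needed; for a tile on the net its net-side blue-green equals $\beta$ by~S6 and matches $D$; the one delicate case is a border tile, where one must know that the type $t^\ast$ read off from the red contour of $\mathcal M$ equals the common border-green index of $\mathcal M$ — and it is precisely here that the blue indices are used, since the blue index of a legal tile, together with the requirement that indices with no origin vanish, pins down its type off the blue segments, which is enough to rule out the red/green clash that could otherwise occur for substitutions finer than the $3\times 3$ grid.

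\emph{Proving that $D$ is legal.} For~(i) I would exploit the legal ancestor chain $C_0=C,C_1,C_2,\dots$ of the central tile $C$ of $\mathcal M$. Since $C$ has central type it is the central child of $C_1$, so it borrows its entire blue-green contour from $C_1$; moreover the central position of every $\sigma'$-decomposition of $C_1$ is occupied by $C$, which forces $C_1$ to have the same form as $D$. Thus $C_1$ is a legal tile of the same form as $D$ with blue-green contour constantly equal to $\beta$. Applying the Dichotomy Lemma to $D_0=C_1$ and $t=t^\ast$ reduces the existence of a legal tile of that form, of type $t^\ast$, and with blue-green contour $\beta$ — which is exactly $D$ — to the statement that for every $z$ some legal tile of type $t^\ast$ agrees with $C_1$ on the $z$-th side; witnesses for this are furnished by the main ports and border tiles of $\mathcal M$. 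The Cyclicity Requirement is what makes the argument go through when $t^\ast$ is itself a central type: it guarantees that the central types encountered along the ancestor chain become cyclic after a bounded number of steps, so that the chain is self-sustaining and the ``no origin means zero'' condition can be verified all the way up.

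\emph{Properness of $\mathcal T'$ and the expected obstacle.} Two composed tiles that are adjacent in $\pi(\mathcal T')$ share a side that is a macroside of both macrotiles; their red indices on that side match by the first stage, and their blue-green indices there both equal the relevant common net value, and these agree because the net arm of $\mathcal M$ reaching that macroside ends in a main port which, by the placement of the central tiles, is adjacent to a main port of the neighbouring macrotile, so matching of $\mathcal T$ forces equality. Hence $\mathcal T'$ is a proper $\sigma'$-composition of $\mathcal T$ consisting of legal tiles, it is unique, and the intermediate side-to-side property is the one already obtained in the first stage. I expect the single real obstacle to be step~(i): it is the only place where the two non-trivial ingredients of the construction — the Cyclicity Requirement and the Dichotomy Lemma — must be combined, and where the bookkeeping of borrowed indices and their origins along an infinite ancestor chain has to be kept under control.
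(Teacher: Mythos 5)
Your overall architecture matches the paper's: red indices force the unique partition into macrotiles and the side-to-side property of the intermediate tilings, the net arms determine the blue-green contour of the composed tile $D$, Observation~\ref{rem2} shows each tile of the macrotile lies in $\cld_s(D)$, and legality of $D$ is obtained by applying the Dichotomy Lemma to a legal parent of the central tile, with the witnesses supplied by the border tiles on the net (item (c) of the paper's Composition Lemma). However, there is one genuine gap, and it is exactly the point the paper is at pains to isolate: the case where the type $t^\ast$ of $D$ is \emph{central acyclic}. The Dichotomy Lemma is stated, and is only true, for non-central or central cyclic types, so you may not ``apply the Dichotomy Lemma to $D_0=C_1$ and $t=t^\ast$'' in that case; Appendix~A.4 of the paper exhibits a substitution with an illegal normal tile of central acyclic type whose entire $\sigma'$-decomposition is legal, so the conclusion you want at this level is simply false without extra work. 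Your appeal to the Cyclicity Requirement (``the chain is self-sustaining'') does not describe a working mechanism. The actual repair is a two-level argument: first show $\mathcal T'$ is proper and normal, so it splits again into macrotiles; take the macrotile $\mathcal M'\subset\mathcal T'$ containing $D$ and its composition $C$ of type $r$; the Cyclicity Requirement guarantees $r$ is \emph{non-central} (its central child $t^\ast$ is acyclic), so the Dichotomy Lemma applies one level up, to a legal grandparent of the central tile of $\sigma' D$, yielding that $C$ is legal and hence that $D$ is legal as a child of $C$. This step is missing from your proof.

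Two smaller inaccuracies. First, the four net arms do not carry a single common value $\beta$: the arm $P_z$ carries the blue-green index destined for the $z$th side of $D$, and these four values are in general distinct (property S6 equates the two net sides of one non-central tile on a path; it does not identify the four arms at the centre). Your conclusion that $D$ has the same blue-green index on all four sides should be dropped; what survives, and is what you actually use, is that $D$ and the legal parent $C_1$ of the central tile have the same blue-green contour side by side. Second, the type $t^\ast$ of $D$ cannot be ``read off from the red contour of $\mathcal M$'': the red indices of a macrotile are the same for all macrotiles of a given form and only determine the form $\alpha_i$ of $D$, not where $D$ sits in its own parent. In the paper $t^\ast$ is defined as the common green (parent) index of the border tiles of $\mathcal M$ (property S4), and the red contour of $D$ is read from the green indices of the secondary ports; normality of $D$ is then something to be \emph{proved}, by exhibiting a legal parent of a border tile with the same type and red indices.
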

\begin{proof}
The key lemma in the proof is the following 
\begin{lemma}[Composition Lemma]\label{l9}
Let $\mathcal S$ be a finite proper tiling 
that is equal to $i$th $\sigma$-macrotile 
$\mathcal M_i$  provided we ignore red, blue and green indices. 
Assume that all tiles in $\mathcal S$ are legal, except possibly for the  central tile $A$. 
    Then the following hold: 
    \begin{enumerate}
    \item[(a)] All border tiles of  $\mathcal S$ have the same parent index $t$, which is of the form $\alpha_i$.
    \item[(b)] The tiling $\mathcal S$
      has the unique $\sigma'$-composition $D$.
      The tile $D$ has type $t$ and is normal.
    \item[(c)] 
For all  $z\in\{\text{north, east, south, west}\}$
the blue-green index on $z$th side of $D$
is equal to that of some legal tile $C_z$ of type $t$.
  
    \item[(d)] If the central tile $A$ is legal and  $t$ is a non-central or central cyclic type, then $D$ is legal as well.
    \end{enumerate}
  \end{lemma}
  \begin{proof}
    (a) Due to property~S\ref{s2}, all the border tiles of
$\mathcal{S}$ have the same parent index $t$. To prove that $t$ is of the form $\alpha_i$, 
it suffices to find a legal type-$t$ tile of the form $\alpha_i$.
   Such a tile is any legal parent $C$ of any border tile $B$ in $\mathcal S$.
   The form of $C$ is $\alpha_i$, since $C$ has a child in $\mathcal S$ and $\pi(\mathcal S)=\mathcal M_i$. And the type of $C$ is $t$, since so is
 the parent index of $B$.

(b) The uniqueness of such a tile $D$ is obvious:
 its type is uniquely determined by the parent index of tiles in $\mathcal S$ and
 its blue-green contour is uniquely determined by blue-green indices of main ports of $\mathcal S$.
 Finally, its red contour  is determined by green indices of secondary ports in $\mathcal S$.
 More specifically, in theory it could happen that different secondary ports 
 on the same macroside of $\mathcal S$ have different green indices.
 It will follow from our arguments that this cannot happen. But for the time being 
let us choose for each side $d$ of $D$ any tile  $E_d\in\mathcal S$ that has a secondary 
port $p$ on the macroside $\sigma d$ and let the red index of $D$ on $d$ be equal to the green
index of $E_d$ on the side $p$. 

Let us show that this tile $D$ is normal. We have to find, for each its side $d$, 
a normal tile of type $t$ with the same red index  on $d$.
This is any legal parent $F_d$ of the chosen tile $E_d$. Indeed, 
 its  type is $t$, since so is the parent index of $E_d$. 
Besides,
\begin{align*}
&\text{red index of }F_d \text{ on }d 
= \text{ green index of  }E_d \text{ 
   on  }p\\ 
=&\text{ red index of }D \text{ on }d.
\end{align*}

Let us show 
that $\mathcal S$ is a decomposition of $D$,
that is, for all types $s$ in $\mathcal M_i$, the tile $B$ of type $s$ in $\mathcal{S}$ 
is in $\cld_s(D)$.

Assume first that $s$ is a non-central type. We know that $B$ is legal, and hence for some tile $C$ of the form 
$\alpha_i$ it is in $\cld_s(C)$. We claim that $\cld_s(D)=\cld_s(C)$.
By Observation~\ref{rem2}, to prove the claim, it suffices to prove two statements:\\
    (1) $C$ and $D$ have the same blue-green index on $z$th side if the path 
    $P_z$ contains $s$, and\\
    (2) $C$ and $D$ have the same type and the same red indices, if $s$ is a border type.\\
    The  statement (1) follows from the following chain of 
equalities for blue-green indices:
\begin{align*}
&\text{the index of }C \text{ on }z\text{th side} \\
= &\text{ the index on any side in the path  }P_z \text{ 
   in the decomposition of }C \\
= &\text{ the index of }B \text{ on any side of the path }P_z \text{ in }\mathcal S\\
=&\text{ the  index of }D \text{ on }z\text{th side}.
\end{align*}
Here the first equality holds by the definition of the substitution $\sigma'$,
the second one holds since $B$ belongs to the decomposition of $C$,
and the last one holds by construction of $D$.

 The first part of statement (2) follows from the fact that the type of $D$ is $t$ by construction,
 and the type of $C$ is $t$ since it is a parent of $B$, which has parent index $t$. 
The second part follows from the fact that both $C$ and $D$ are normal. 
    
   It remains to show that $A$, the central tile in $\mathcal S$, is in  $\cld_s(D)$, where
   $s$ is the central type in $\mathcal S$. 
     We have to prove that on each side $a$ of $A$ all
    three indices are as required by the definition of $\cld_s(D)$. Choose any such side $a$.
    On this side, $A$ has the same indices as its  neighbor $B$ on side $a$,
    since the tiling $\mathcal{S}$ is proper.
    And $B$  has the indices required for the child of $D$ of its type.
    By construction,  the indices of adjacent children on shared sides coincide.
    Therefore, the tile $A$ on the side $a$ has the required indices.

    (c) 
    On the path $P_{z}$ in macrotile $\mathcal M_i$, there is a border tile $B$,
    see Fig.~\ref{f8}.
    \begin{figure}
      \begin{center}
        \includegraphics[scale=.7]{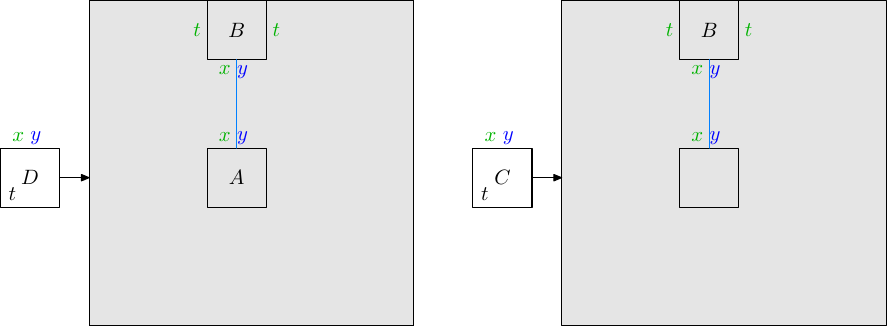}
      \end{center}
      \caption{On the left is a proper tiling $\mathcal S$ with $\pi(\mathcal S)=\mathcal M_i$ and with the  central tile $A$. Its composition is $D$.
      The net path in $\mathcal S$ containing $z$th side of $A$ (the north side in the figure)
        is shown in blue. 
        On the right is shown the legal parent $C$ of $B$ and the decomposition of $C$. 
        The type of $C$
        and the parent indices in its decomposition and in the tiling $\mathcal S$ are  denoted
        by $t$. The figure shows that tile $D$ has the same blue-green index on $z$th side 
as tile $C$ on $z$th side.}\label{f8}
    \end{figure}
    Consider any its legal parent $C$, it has type $t$, since so is the parent index of $B$. We claim that the blue-green index on $z$th side 
of $C$ is the same as that on $z$th side of $D$. This follows from the chain of equalities for blue-green indices:
    \begin{align*}
      &\text{the  index of }C \text{ on $z$th side }\\
      =&
      \text{ the index of }B \text{ on the sides of the path } P_z\\ =
      &\text{ the  index of }D \text{ on $z$th side } 
    \end{align*}
    Both equalities holds by the definition of substitution on decorated tiles.

    (d) Assume now that the central tile $A$ is legal and $t$ is either a non-central, or a central cyclic type. 
We have already shown that $D$ is normal. It remains to prove that 
its blue-green contour is good, that is, that $D$ is similar to a legal tile of type $t$. 
Informally, the central tile $A$ has verified that the  blue-green contour of $D$
is legal, that is, $D$ is similar to some legal tile. On the other hand,
border tiles from the net have verified that each blue-green index of $D$ is equal to that
of some legal type-$t$ tile. And Dichotomy Lemma guarantees
that in such circumstances  $D$ is similar to a legal tile of type $t$.

More specifically,
let $D'$ denote any legal parent of $A$. Since $D$ and $D'$ are similar, 
it suffices to prove that $D'$ is similar to some legal tile of type $t$. 
As $D'$ is a parent of $A$, for each $z$
the  blue-green index on $z$th side of   $D'$ is equal to that of $A$
and hence of $D$.
And by item (c) the latter equals to that of some legal tile $C_z$ of type $t$.
By Dichotomy Lemma
$D'$ is similar to a legal  type-$t$ tile.  
  \end{proof}

Let us continue the proof of Proposition~\ref{th2}.
Let a proper tiling $\mathcal T$ with legal tiles be given. Red indices guarantee that it can be partitioned into macrotiles,
and in the unique way. For each of these macrotiles $\mathcal{S}$ there is $i$ with $\pi(\mathcal{S})=\mathcal M_i$.
Replace each macrotile $\mathcal S$ of the original tiling by the tile $D$ existing by item (b)
of the Composition Lemma.
  We obtain a composition of $\mathcal T$.
 Red indices on outer sides ensure that the composed tiling is  side-to-side.
Each main port in $\mathcal T$ is adjacent to a main port and each secondary 
port is adjacent to a secondary port.
Since $\mathcal T$ is proper, all indices of its tiles on adjacent ports match.
And since all indices on sides of composed tiles are borrowed from ports of $\mathcal T$,
the composed tiling $\mathcal T'$ is proper. 

Let us show the tiling  $\mathcal T'$ has the required feature:
all tilings $$\tau^{1} \pi(\mathcal T'), \dots, \tau^{m-1} \pi(\mathcal T')$$
are side-to-side. 
Let $D$ be a tile from  $\mathcal T'$.  Since all $\tau$-supertiles are side-to-side tilings,
all supertiles $\tau^{m-1}\pi(D),\dots,\tau \pi(D)$ are side-to-side. So the problem may occur
only if for adjacent tiles $D,E\in  \mathcal T'$
there is $i<m$ such that some tiles $D'\in\tau^i \pi(D)$ and $E'\in\tau^i \pi(E)$
have sides $a',b'$ that share a segment of positive length but do not coincide.
This contradicts the fact that the supertiles $\tau^m D,\tau^m E \in \mathcal T$
have the same red indices on the shared superside. Indeed, this implies that $a',b'$ are obtained
from pairs of adjacent  sides of tiles from $\tau^m D,\tau^m E$ and hence coincide.

  It remains to prove that each tile $D\in\mathcal T'$  is legal.
  By item (d) of the Composition Lemma, $D$ is legal unless $D$ is of central
  acyclic type.
  We have to prove that  $D$ is legal even in this case.
  
Since all composed tiles are normal, $\mathcal T'$ can be again split into macrotiles.
Consider a macrotile $\mathcal M'\subset\mathcal T'$  containing a tile $D$ of a central acyclic  type $t$.
  Denote by $r$ the parent indices of tiles in $\mathcal M'$. By the Composition Lemma (b), the tiling $\mathcal M'$ has a composition $C$ of type $r$,
see Fig.~\ref{f40}.
  \begin{figure}
    \begin{center}
      \includegraphics[scale=1]{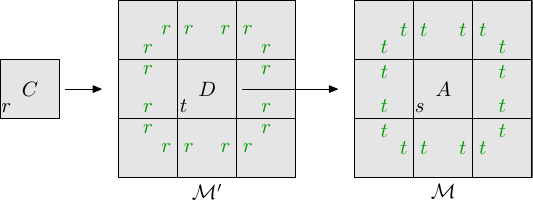}
    \end{center}
    \caption{Tiles $C,D,A$ have  types are $r,t,s$, respectively, where $r$ is a non-central type and $t$
is a central acyclic type.
      The macrotile $\mathcal M'$ is a decomposition of $C$, and the macrotile $\mathcal M$ is a decomposition of $D$.
      The figure illustrates the proof that  $D$ is legal.}\label{f40}
  \end{figure}
   
Since the central child of $r$ is an acyclic type $t$,
by Cyclicity Requirement $r$ itself is not central.
By item (c) of the Composition Lemma, for each $z\in\{\text{north, east, south, west}\}$  
there is a legal tile $F_z$ of type $r$ whose  
blue-green index on $z$th side coincides with that of $D$.
 Hence the same holds for $C$:   for each side $C$  is similar to 
a legal tile of type $r$ on that side.

We would like to apply Dichotomy Lemma to $C$,
but we may not do that, as we have not proved yet that $C$ is legal.
Therefore let  $D'$ denote any legal parent of $A$, and $C'$ any legal parent of $D'$. 
Then $D'$ 
and $D$ have the same blue-green contour, thus so have $C$ and $C'$. 
Hence, like $C$,  for each $z$ the tile
$C'$ is similar to a legal tile $F_z$ of type $r$ on $z$th side.
By the Dichotomy Lemma, $C'$ is similar to a legal tile $F$ of type $r$.
So is $C$, since it has the same blue-green contour as $C'$.
This implies that the tile $C$ is legal.
Therefore, $D$ is legal, being a child of a legal tile $C$.
\end{proof}

 Theorems~\ref{th0}(a) and~\ref{th22}(a) are proved.
 
\subsection{Proof of Theorem~\ref{th0}(b)}
We first show that every substitution tiling is hierarchical. In this proof 
we will not use the assumption that all tiles are 
squares. Instead we will assume that 
all supertiles are side-to-side tilings.

\begin{lemma}\label{th11}
Assume that 
all supertiles are side-to-side tilings. Then
every substitution tiling $\mathcal{T}$ is hierarchical.
Moreover, every substitution tiling has a composition that is
also a substitution tiling.
\end{lemma}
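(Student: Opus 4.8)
The plan is to establish the ``moreover'' part first --- that every substitution tiling $\mathcal T$ admits a composition that is again a substitution tiling --- and then deduce the hierarchical conclusion by iterating it. Once the ``moreover'' is in hand, put $\mathcal T_0=\mathcal T$ and let $\mathcal T_{i+1}$ be a composition of $\mathcal T_i$ that is again a substitution tiling. Every substitution tiling is side-to-side: any two of its tiles lie together in a finite part, which lies in some supertile, and all supertiles are side-to-side by hypothesis; since ``side-to-side'' is a condition on pairs of tiles, the whole tiling inherits it. Hence the chain $\mathcal T_0,\mathcal T_1,\mathcal T_2,\dots$ consists of side-to-side tilings, each a composition of the previous one, so it witnesses that $\mathcal T$ is hierarchical.

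For the ``moreover'', fix a substitution tiling $\mathcal T$ and build a composition of it by a compactness argument. For each $n$, the tiles of $\mathcal T$ meeting the ball $B_n$ of radius $n$ about the origin form a finite family contained in some supertile $S_n$; since these families exhaust $\mathcal T$ and every supertile is finite, the orders $k_n$ of the $S_n$ may be taken to satisfy $k_n\ge 1$ (and, after passing to a subsequence, $k_n\to\infty$). A supertile of order $k\ge 1$ is the decomposition of a supertile of order $k-1$, hence carries a canonical partition into macrotiles; call it $\mathcal Q_n$ for $S_n$. Macrotiles have bounded diameter, and for $n$ much larger than $r$ every tile of $\mathcal T$ lying in $B_r$ has its whole $\mathcal Q_n$-macrotile among the tiles of $\mathcal T$ (this uses $\mathcal T\cap B_n\subseteq S_n$ and that $\mathcal T$ tiles the plane). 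Thus $\mathcal Q_n$ restricts to a partition of the tiles of $\mathcal T$ inside $B_r$ into macrotiles. For a fixed $r$ there are only finitely many partitions of the boundedly many tiles of $\mathcal T$ near $B_r$ into shifts of $\mathcal M_1,\dots,\mathcal M_M$, so by K\"onig's lemma there is a single partition $\mathcal Q$ of all of $\mathcal T$ into macrotiles such that for every $r$ the restriction of $\mathcal Q$ to $B_r$ agrees with that of $\mathcal Q_n$ for infinitely many $n$.

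Let $\mathcal T'$ be the composition of $\mathcal T$ determined by $\mathcal Q$, obtained by replacing each macrotile $M\in\mathcal Q$ by the tile of which $M$ is the decomposition. I claim $\mathcal T'$ is a substitution tiling. Any finite part $U'$ of $\mathcal T'$ is the composition of the finite part $U$ of $\mathcal T$ consisting of the $\mathcal Q$-macrotiles mapped into $U'$. Pick $r$ so large that $B_r$ contains $U$ together with all $\mathcal Q$-macrotiles meeting it; then for some $n$ the restriction of $\mathcal Q$ to $B_r$ coincides with that of $\mathcal Q_n$, so on $U$ the partition $\mathcal Q$ is exactly the canonical macrotile partition of $S_n$. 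Composing $S_n$, a supertile of order $k_n$, produces a supertile of order $k_n-1$ that contains $U'$. Hence every finite part of $\mathcal T'$ lies in a supertile, i.e.\ $\mathcal T'$ is a substitution tiling, and the induction closes.

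The step I expect to be the real point is not the compactness extraction, which is routine, but the need for the macrotile partition $\mathcal Q$ to be \emph{coherent with the supertile hierarchy}, so that composing by it yields a substitution tiling and not merely a side-to-side tiling. This is exactly why $\mathcal Q$ is taken as a K\"onig-lemma limit of the canonical partitions $\mathcal Q_n$ of the approximating supertiles $S_n$ rather than as some arbitrary macrotile partition of $\mathcal T$: an arbitrary such partition carries no information about supertiles and need not descend under composition. A minor technical point to handle carefully is the assertion that tiles of $S_n$ lying well inside $B_n$ are genuine tiles of $\mathcal T$, which follows from $\mathcal T$ and $S_n$ having the same tiles on $B_n$, those tiles covering $B_n$, and the tiles of $S_n$ being pairwise non-overlapping.
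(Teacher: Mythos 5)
Your proof is correct and follows essentially the same route as the paper's: both extract, by a compactness/diagonalization argument over supertiles covering larger and larger finite parts of $\mathcal T$, a coherent assignment of parents (your limit partition $\mathcal Q$ is exactly the paper's thinned-out consistent parent choice), and then verify that every finite part of the composed tiling lies in a supertile obtained by composing one of the covering supertiles. The only cosmetic differences are that you index by balls and invoke K\"onig's lemma where the paper enumerates tiles and repeatedly thins the sequence of covers, and that you make explicit the (implicitly assumed) fact that $\mathcal T$ tiles the plane.
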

The converse is not true in general.
\begin{proof}
As  all supertiles are side-to-side tilings,
all substitution tilings are side-to-side.
Thus the first statement of the theorem follows from the second one.

Let us prove the second statement.
Let $\sigma$ be a substitution and $\mathcal{T}$ a substitution tiling.
Call any supertile $S$ for which
$F\subset\sigma S$
a \emph{cover} of a tiling $F$.
Any finite $F\subset \mathcal T$ has a cover. Indeed,
$F$ is included in some supertile and
the composition of that supertile is a cover of $F$.

Let
$A_1,A_2,\dots$  be an enumeration of all tiles from $\mathcal T$.  For each $n$
let $S_n$ denote  any cover  of the set $\{A_1,\dots, A_n\}$.
For $i\le n$, we call the tile $B\in S_n$ for which $A_i\in \sigma B$  the \emph{parent of $A_i$ in $S_n$}. For $m>n\ge i$, the tile
$A_i$ may have different parents in $S_m$ and in $S_n$.
However, by removing some terms from the sequence $S_1,S_2,\dots$, we can
ensure that this does not happen, namely, that for all $m> n$
the parents of $A_n$ in $S_m$ and $S_n$ coincide.

This is done using a diagonal construction. First, note that
for any tile $A_i$, the set of all possible parents of $A_i$ is finite. Indeed, a parent of $A_i$
can be identified by its form and the location of  $A_i$ in its decomposition.

Now we choose any tile $B_1$ such that for 
infinitely many $i$ the tile $B_1$ is the parent  of $A_1$  in
$S_i$. Remove from the sequence
$S_ 1,S_2,\dots$ all tilings $S_i$ for which the parent of $A_1$ in
$S_i$ is different from $B_1$. Denote by
$S'_1,S'_2,\dots$
the resulting infinite sequence of tilings.
Fix the first member $S_1'$ in it, and
thin out the sequence $S'_2,S'_3,\dots$ so that $A_2$ has
the same parent in all its tilings.
Denote by
$S''_2,S''_3,\dots$
the resulting infinite sequence. And so on. The sought sequence is $S'_1,S''_2,S'''_3,\dots$

So, we can
assume that for all $m> n$
the parents of $A_n$ in $S_m$ and $S_n$ coincide.
Now we can construct a composition of the tiling $\mathcal T$.
This is the set
$$
\mathcal T'=\{\text{the parent of }A_n\text{ in }S_n\mid n\in\N\}.
$$
By construction, the decomposition of this set contains
all the tiles $A_1,A_2,\dots$. It remains for us to prove
that $\mathcal T'$ is a substitution tiling,
in particular, different tiles from $\mathcal T'$ do not overlap.

It suffices to prove that for all $n$ the set
of tiles
$$
\{\text{the parent of }A_1 \text{ in }S_1,
\text{the parent of }A_2 \text{ in }S_2,\dots,
\text{the parent of }A_n \text{ in }S_n\}
$$
is included in some supertile.
By construction, this set
coincides with the set
$$
\{\text{the parent of }A_1 \text{ in }S_n,
\text{the parent of }A_2 \text{ in }S_n,\dots,
\text{the parent of }A_n \text{ in }S_n\},
$$
which is included in the supertile $S_n$.
\end{proof}

Let us prove now Theorem~\ref{th0}(b). Let $\tau$ denote the given substitution.
We first introduce the notion of a crown.
\begin{definition} 
Let
$\mathcal{T}$ be a tiling  with $\tau$-tiles.
  \emph{A crown} of  $\mathcal{T}$  at a vertex $V$ of its tile is the fragment
  of $\mathcal{T}$ consisting of all tiles
  from $\mathcal{T}$ that include $V$. 
  A crown is called \emph{allowed} if it is a crown at some interior vertex of some $\tau$-supertile.
 \end{definition}

Assume that for the given substitution $\tau$ 
all prototiles are squares of the same size.
We have to show that
the family of \emph{substitution} tilings for $\tau$ is sofic.
Again, we first choose a power $\sigma=\tau^m$ satisfying Lemma~\ref{l6}.

Obviously, all crowns in any $\tau$-substitution  tiling are allowed.
It turns out that for hierarchical tilings the converse is also true in a sense:

\begin{lemma}\label{l22}
  Assume that a sequence of tilings
  $\mathcal{T}_0=\mathcal{T}, \mathcal{T}_m, \mathcal{T}_{2m}, \dots$
  witnesses that the tiling   $\mathcal{T}$ is $\tau^m$-hierarchical. 
  Assume further that 
  all crowns in all tilings
  $\mathcal{T}_{im}$ are allowed. 
  Then $\mathcal{T}$ is a $\tau$-substitution tiling. 
\end{lemma}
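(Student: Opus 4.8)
The plan is to build, from the hierarchical sequence $\mathcal{T}_0,\mathcal{T}_m,\mathcal{T}_{2m},\dots$, a system of supertiles covering every finite piece of $\mathcal T$, and then invoke Lemma~\ref{th11} in reverse: a tiling all of whose finite fragments sit inside genuine supertiles is a substitution tiling. So the real content is to show that the pieces of $\mathcal T$ produced by the inverse substitutions $\sigma^{i}=\tau^{im}$ applied to $\mathcal{T}_{im}$ — which are \emph{a priori} only guaranteed to be locally correct (each $\sigma$-macrotile is a genuine macrotile, and adjacent macrotiles meet macroside-to-macroside) — are in fact globally correct supertiles, i.e.\ isomorphic as patches to honest supertiles of the substitution. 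The hypothesis that all crowns are allowed is exactly what converts "locally a supertile" into "globally a supertile".

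The key steps, in order. First I would fix a tile $A_n$ of $\mathcal T$ and a large ball $B$ around it; by hierarchy $A_n$ lies inside the $\sigma^i$-preimage structure for every $i$, so for $i$ large enough the fragment $\mathcal T\cap B$ is contained in a single decomposed $\sigma^i$-macrotile $\sigma^i C$ with $C$ a tile of $\mathcal{T}_{im}$. The catch is that $\sigma^i C$ is the image of $C$ under the substitution applied inside $\mathcal{T}_{im}$, which is itself only locally-correct at the previous level; so I would set up an induction on $i$. The inductive claim is: for each $i$, each tile $C$ of $\mathcal{T}_{im}$ has a $\sigma^i$-decomposition inside $\mathcal T$ which agrees, as a finite tiling, with a genuine order-$im$ supertile of $\tau$. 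The base case $i=0$ is trivial. For the inductive step, $\sigma^{i+1}C$ is obtained from $\sigma\,C'$ for the various $\sigma$-macrotiles $C'$ making up $C$ (here using that $\mathcal{T}_{im}$ composes to $\mathcal{T}_{(i+1)m}$), and each such $\sigma C'$ is by induction a genuine order-$im$ supertile. These patches already fit together macroside-to-macroside by the hierarchy hypothesis; I must upgrade "macroside-to-macroside" to "side-to-side and combinatorially a genuine supertile" at all the internal vertices where two or more of these order-$im$ supertiles meet. That is precisely where the crown hypothesis enters: each such internal vertex $V$ of $\sigma^{i+1}C$ is an interior vertex of some $\mathcal{T}_{jm}$ (for an appropriate $j\le i$, after pushing down through the substitution), hence its crown in that tiling is allowed, hence it coincides with a crown at an interior vertex of some genuine supertile. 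Matching up these local pictures around every internal vertex forces the whole patch $\sigma^{i+1}C$ to be a genuine order-$(i+1)m$ supertile. (One must also check that crowns at vertices lying on the boundary of lower-level macrotiles but in the interior of $\sigma^{i+1}C$ are handled — these sit on macrosides of lower-level supertiles, and the macroside-to-macroside matching plus the allowedness of the crown at that vertex pins them down; the side-to-side hypothesis on supertiles is automatic here because prototiles are squares, but in fact follows more cheaply since genuine supertiles are side-to-side.)

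Once the induction is in place, for every $n$ the fragment $\{A_1,\dots,A_n\}\subset\mathcal T$ lies, for $i$ large, inside $\sigma^i C$ for a single tile $C$ of $\mathcal{T}_{im}$, and that patch is a genuine order-$im$ supertile of $\tau$. Hence every finite fragment of $\mathcal T$ is contained in a supertile, which by definition makes $\mathcal T$ a $\tau$-substitution tiling, completing the proof.

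The main obstacle I anticipate is the inductive step, specifically the gluing at internal vertices: one has several genuine supertiles meeting around a vertex $V$, each locally correct, and each pair meeting correctly along an edge, and one needs that the full cyclic arrangement around $V$ is the arrangement one sees in an honest supertile. This does not follow from pairwise edge-matching alone — it is a genuinely global local constraint — and the allowed-crown hypothesis is the tool that supplies it; the bookkeeping of which level $jm$ a given vertex "belongs to" (i.e.\ at which stage it first appears as an interior vertex of a macrotile decomposition) is the fiddly part that must be done carefully.
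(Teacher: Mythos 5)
There is a genuine gap, and it sits exactly where the allowed-crown hypothesis has to do its work. Your argument hinges on the claim that for a fixed ball $B$ around a tile, ``for $i$ large enough the fragment $\mathcal T\cap B$ is contained in a single decomposed $\sigma^i$-macrotile $\sigma^i C$ with $C$ a tile of $\mathcal T_{im}$.'' This is false in general: a vertex of $\mathcal T$ can lie on the boundary of tiles of $\mathcal T_{im}$ for \emph{every} $i$ (a persistent corner where several infinite-order supertiles meet), and then any fragment containing the tiles around that vertex meets the decompositions of several tiles of $\mathcal T_{im}$ at every level. If your claim were true, the allowed-crown hypothesis would never be needed and every hierarchical tiling would automatically be a substitution tiling, contradicting the remark after Lemma~\ref{th11} that the converse is not true in general. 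The correct statement, and the heart of the paper's proof, is weaker: define $F_{k+1}$ to be the set of tiles of $\mathcal T_{(k+1)m}$ whose decompositions meet $F_k$; since tile diameters grow geometrically while $F_0$ is fixed, for large $k$ the set $F_k$ is contained in a single \emph{crown} of $\mathcal T_{km}$ (tiles sharing one vertex), not in a single tile. The hypothesis that this crown is allowed then places it inside a genuine supertile $S$, and $F_0$ lies in the $km$-fold decomposition of $S$, which is again a supertile. Your proposal never treats fragments straddling such persistent boundaries, which is precisely the case the lemma is about.

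Conversely, the induction you spend most of your effort on addresses a non-problem. Since $\mathcal T_{im}$ is, by the definition of composition, obtained from $\mathcal T_{(i+1)m}$ by literally applying the substitution to every tile, the patch of $\mathcal T$ descending from a tile $C\in\mathcal T_{im}$ is exactly $\tau^{im}C$, hence a genuine order-$im$ supertile by definition; no gluing of locally correct pieces and no crown argument at interior vertices is required there (crowns at interior vertices of a supertile are allowed tautologically). So the crowns are invoked where they are not needed and omitted where they are: the step from ``fragment contained in the decomposition of a crown of $\mathcal T_{km}$'' to ``fragment contained in a supertile'' via allowedness is the missing idea.
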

\begin{proof}
  Let $F_0$ be an arbitrary finite fragment of $\mathcal{T}$. Let us prove that it is included in some supertile.
  Consider all tiles in $\mathcal{T}_m$ whose $\tau^m$-decompositions  intersect $F_0$. Denote this fragment by $F_1$.
  Consider all tiles in $\mathcal{T}_{2m}$ whose decompositions intersect $F_1$.
  Denote this fragment by $F_2$. And so on.
  For sufficiently large $k$, the fragment $F_k$ consists of only one tile,
  or two tiles with a common vertex, or three tiles with a common vertex, and so on.
  That is, $F_k$ is covered by one crown of the tiling $\mathcal{T}_{km}$ 
  and by assumption
this crown is allowed, that is, it is contained in some supertile $S$.
  It follows that $F_0$ is contained in the $km$-fold decomposition of the supertile $S$,
  which is also a supertile.
\end{proof}
We will rely on this lemma when constructing our set of tiles.
Note that the total number of crowns that are side-to-side tilings is finite.
Let us number them.

We then construct the set of legal tiles as before so that Propositions~\ref{th1} and~\ref{th2} hold, 
but this time we add some extra information to blue indices.
As a result, tiling the plane will become more difficult,
so the family of proper tilings will decrease or remain the same.

The definition of the set $\cld_s(A)$ does not change,
except for the definition of blue indices on non-inner non-net sides.
Namely, let $s$ be a type from $i$th macrotile $\mathcal M_i$ for $\sigma=\tau^m$ and $a$ its non-net side that   has an endpoint $V$
lying on the boundary of $\mathcal M_i$. Then the blue index on $a$ of every tile $B\in\cld_s(A)$
is supplemented with the number of any allowed crown 
that is consistent with $s$, that is, containing tile $s$ in the corresponding place.
Moreover, if it happens that the type $s$ has two non-net sides $a,b$ with a common vertex $V$
lying on the boundary of the macrotile, then the numbers of the crowns added to the blue index on sides $a$ and $b$ must coincide.
If $a$ is secondary port,
then the numbers of two allowed crowns are added to the blue index of $a$,
first for the left end, then for the right end. The extra information in native blue indices 
may increase the size of the set $\cld_s(A)$,
since there can be several crowns satisfying these restrictions. In particular, even for types $s$ without 
secondary ports,
it might happen that  $|\cld_s(A)|>1$.

To distinguish decorated tiles in this section from the decorated tiles
from the previous section, we will call them \emph{enriched}. The substitution on enriched tiles defined above
will be denoted by $\sigma''$. 
The definition of a legal enriched tile is not changed, but this time we use $\sigma''$ in place of $\sigma'$ 
in that definition. 

We claim that all the lemmas proved so far, and Proposition~\ref{th2}, remain true for substitution $\sigma''$ on 
enriched tiles. Let us check this.
\begin{itemize}
\item Observation~\ref{rem2}.
  For this observation, adding information to the native blue indices  does not matter.
  It only matters how the borrowed indices 
  are defined,
  and we have not changed that.
\item Observations~\ref{o-diff1} and~\ref{o-diff2} in the proof of Dichotomy Lemma.
  They  assert that some blue indices are different.
  Adding information to indices cannot make different indices coincide.
\item Lemma~\ref{l10} (Dichotomy Lemma).
  In cases where the second alternative was true (the tiles are not similar along some side),
  the dissimilarity is preserved when adding information to the indices.
  In cases where the first alternative was true (the tiles are similar),
  the proof of similarity did not use the definition of native blue indices.
\item Lemma~\ref{l9} (Composition Lemma).
  Item (a) does not depend at all on how the blue indices are defined.
  Item (b) could be spoiled by changing blue indices.
  But since Observation~\ref{rem2} is preserved, tiles of all types $s$ except the central one still belong to $\cld_s(D)$.
  And for the central type, the assertion will remain true, since all indices on its sides have not been changed,
  as the central type has no non-inner sides.
  In item (c), it is only important how the borrowed indices on net sides are defined,
  and we did not change that. 
  In the proof of (d), we did not refer to the definition of blue indices,
  so this item remains valid.
\item Proposition~\ref{th2}. In the proof of the proposition, we did not refer to the way in which blue indices are defined at all,
  but only to the lemmas. Since the lemmas are preserved, the proof remains valid.
\end{itemize}

Thus we get the following analogue of Proposition~\ref{th2}
\begin{prop} \label{th2a}
  Every proper tiling $\mathcal T$ with legal enriched tiles has a unique proper $\sigma''$-composition $\mathcal T'$  consisting of legal tiles.
  The tiling  $\mathcal T'$ has the following feature:
all tilings $\tau^{1} \pi(\mathcal T'), \dots, \tau^{m-1} \pi(\mathcal T')$
are side-to-side.\footnote{Actually, this feature will not be used in the sequel. Thus we can
simplify red indices.}
\end{prop}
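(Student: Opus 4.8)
The plan is to reproduce the proof of Proposition~\ref{th2} essentially verbatim, with $\sigma'$ replaced by $\sigma''$ and ``legal'' read as ``legal enriched'' throughout. This is legitimate precisely because the discussion preceding the statement has already checked that every ingredient that proof uses --- Observation~\ref{rem2}, Observations~\ref{o-diff1} and~\ref{o-diff2}, the Dichotomy Lemma (Lemma~\ref{l10}), and all four items of the Composition Lemma (Lemma~\ref{l9}) --- survives the enrichment.

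Concretely, I would first note that every legal enriched tile is still normal, since the enrichment only appends crown numbers to native blue indices on non-inner non-net sides and touches neither red indices nor the borrowed indices. Hence the red contours of the tiles of $\mathcal T$ force a unique partition of $\mathcal T$ into $\sigma$-macrotiles, and each such macrotile $\mathcal S$ satisfies $\pi(\mathcal S)=\mathcal M_i$ for exactly one $i$. Applying item~(b) of the Composition Lemma to each $\mathcal S$ produces its unique composition tile; collecting these gives $\mathcal T'$, which is unique because the partition is unique and the composition tiles are. That $\mathcal T'$ is side-to-side follows from matching red indices on shared macrosides of adjacent macrotiles of $\mathcal T$, and that it is proper follows from the fact that every index on a side of a composed tile is borrowed from a port of the proper tiling $\mathcal T$ and adjacent ports there match. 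The side-to-side feature for $\tau^{1}\pi(\mathcal T'),\dots,\tau^{m-1}\pi(\mathcal T')$ is obtained exactly as before: a mismatch at some level $i<m$ between adjacent $D,E\in\mathcal T'$ would force adjacent non-coinciding sides inside $\tau^m D$ and $\tau^m E\in\mathcal T$, contradicting equality of red indices on their shared superside.

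Finally, legality of each $D\in\mathcal T'$: item~(d) of the Composition Lemma gives this in all cases except when $D$ has central acyclic type, and in that case I would copy the corresponding passage from the proof of Proposition~\ref{th2} --- pass to a legal parent $A$ of a child of $D$, then to legal ancestors $D'$ and $C'$ sharing blue--green contours with $D$ and with the composition $C$ of the macrotile around $D$; use the Cyclicity Requirement to see the parent type $r$ is non-central; use item~(c) of the Composition Lemma to produce legal type-$r$ tiles agreeing with $C'$ side by side; and apply the Dichotomy Lemma to conclude $C'$, hence $C$, is similar to a legal type-$r$ tile, so $C$ and its child $D$ are legal.

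The hard part will be convincing oneself that item~(b) of the Composition Lemma genuinely survives the enrichment, i.e.\ that the crown numbers now present in native blue indices do not keep some tile of $\mathcal S$ out of $\cld_s$ of the composition tile. For non-central types $s$ this is ensured by Observation~\ref{rem2}, which depends only on the (unchanged) borrowed indices and is already known to be preserved; for the central type the enrichment is vacuous, because the central type has no non-inner sides, so the argument placing the central tile into its $\cld_s$ is literally unchanged. Once this is pinned down, everything else is the same formal manipulation of the preserved lemmas as in Proposition~\ref{th2}.
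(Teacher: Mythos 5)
Your proposal is correct and follows the paper's own route: the paper likewise establishes Proposition~\ref{th2a} by checking that Observation~\ref{rem2}, Observations~\ref{o-diff1}--\ref{o-diff2}, the Dichotomy Lemma and all items of the Composition Lemma survive the enrichment (the central type having no non-inner sides being exactly the point for item~(b)), and then noting that the proof of Proposition~\ref{th2} uses only these lemmas. Your more detailed re-run of that proof, including the separate treatment of central acyclic types via the Cyclicity Requirement, item~(c) and the Dichotomy Lemma, matches the paper's argument.
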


We begin the proof of Theorem~\ref{th0}(b), as before,
by proving the implication in the easy direction.
\begin{prop}\label{th1a}
  The tiles of any \emph{$\tau$-substitution} tiling $\mathcal T$  can be decorated so as to obtain a proper tiling with
  legal
  enriched tiles.
\end{prop}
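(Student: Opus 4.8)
The plan is to mirror the proof of Proposition~\ref{th1} almost verbatim, since the only change in this section is that native blue indices on non-net non-inner sides now carry an extra ``crown number''. First I would invoke Lemma~\ref{th11}: since all prototiles are squares of the same size, all supertiles are side-to-side, so every $\tau$-substitution tiling $\mathcal T$ is $\tau$-hierarchical, and in fact $\mathcal T$ has a $\tau$-composition that is again a $\tau$-substitution tiling. Iterating, I fix an infinite sequence $\mathcal T_0=\mathcal T,\mathcal T_1,\mathcal T_2,\dots$ in which each $\mathcal T_{i}$ is a $\tau$-composition of $\mathcal T_{i-1}$ \emph{and} each $\mathcal T_i$ is itself a $\tau$-substitution tiling. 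Then I pass to the subsequence $\mathcal T_0,\mathcal T_m,\mathcal T_{2m},\dots$, and in each $\mathcal T_{im}$ group the tiles into $\sigma$-macrotiles according to $\mathcal T_{(i+1)m}$, exactly as in Proposition~\ref{th1}; side-to-sideness of $\mathcal T_{(i+1)m}$ gives that macrosides meet macrosides and main ports meet main ports.

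Next I decorate. I color each $\sigma$-macrotile $\mathcal M$ of $\mathcal T_{im}$ by the recipe of Section~\ref{s43}, \emph{with} the enrichment: on a non-net non-inner side $a$ of a type $s$ having an endpoint $V$ on the boundary of its macrotile, the native blue index is supplemented by the number of a crown. Here I use that $\mathcal T_{im}$ is a $\tau$-substitution tiling, hence so is $\mathcal T_{(i-1)m}$ and every tiling below it; in particular the crowns of $\mathcal T_{(i-\ell)m}$ are crowns at interior vertices of supertiles, i.e.\ allowed. The key point is consistency: the crown of $\mathcal T_{im}$ at the vertex $V$ (the image of $V$ under the appropriate decompositions) \emph{is} an allowed crown consistent with $s$, so I use \emph{that} crown's number; when $s$ has two such sides $a,b$ sharing the vertex $V$, both get the number of the same crown, so the ``numbers must coincide'' constraint is met; for a secondary port, the two endpoints get the numbers of the two corresponding allowed crowns. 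Undetermined components of blue indices on secondary ports I choose to agree on adjacent tiles, and borrowed indices I set to their origin if it exists, to zero otherwise — just as in Proposition~\ref{th1}.

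Finally I check the two required properties. Properness of the chain $\mathcal T'_0,\mathcal T'_m,\dots$: on non-outer sides adjacent tiles of a macrotile already agree by construction; on outer sides, red indices agree because $\mathcal T_{im+1},\dots,\mathcal T_{(i+1)m}$ are side-to-side, native blue indices agree by the choice of their undetermined parts \emph{and} by the choice of crown numbers — two adjacent outer sides from different macrosides correspond, under decomposition, to sides meeting at a common vertex $V$ in the finer tiling, and the crown of $\mathcal T_{im}$ at $V$ is a single crown, so both sides record the same crown number; borrowed indices with an origin equal that origin and the origins of adjacent sides are adjacent hence equal, and borrowed indices without origin are zero on both sides. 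Legality of all resulting tiles: by construction each macrotile is a $\sigma''$-decomposition of its parent, and along the ancestor chain $\dots\to A_{2m}\to A_m\to A_0=A$ every index without an origin is zero, so $A$ is legal. This is literally the argument of Proposition~\ref{th1}, now with $\sigma''$ in place of $\sigma'$.

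The main obstacle is the crown-consistency bookkeeping: one must be careful that the crown number recorded on a side of a type $s$ inside $\mathcal M\subset\mathcal T_{im}$ really is determined by a single allowed crown of $\mathcal T_{im}$ at a well-defined vertex, and that adjacent sides (within a macrotile, across macrosides, and at secondary ports) always reference \emph{that} crown, so that the enriched constraint ``numbers must coincide'' is automatically satisfied rather than an obstruction. Once one observes that the decomposition maps each relevant boundary vertex of a macrotile to a genuine vertex of the finer substitution tiling $\mathcal T_{(i-\ell)m}$, whose crown is allowed and consistent with the type at that position, everything goes through; the rest is the same coloring-of-hierarchical-tilings argument already carried out for Proposition~\ref{th1}.
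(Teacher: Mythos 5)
Your proposal is correct and follows essentially the same route as the paper: invoke Lemma~\ref{th11} to get a sequence of $\tau$-substitution tilings witnessing hierarchicality, pass to the subsequence for $\sigma=\tau^m$, decorate as in Proposition~\ref{th1}, and fill the crown numbers in the native blue indices with the actual crowns of $\mathcal T_{im}$, which are allowed precisely because $\mathcal T_{im}$ is a substitution tiling. The paper dismisses the remaining consistency checks as obvious, whereas you spell them out; no substantive difference.
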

\begin{proof}
  This is proved along the same lines as before: by Lemma~\ref{th11} 
  there is a sequence of $\tau$-substitution tilings 
  $$\mathcal T_0=\mathcal T,\mathcal T_1,\mathcal T_{2},\dots$$ witnessing
  that $\mathcal T$ is $\tau$-hierarchical.
  Then the sequence   $$\mathcal T_0,\mathcal T_m,\mathcal T_{2m},\dots$$ witnesses
  that $\mathcal T$ is hierarchical w.r.t. $\sigma=\tau^m$.
  We then decorate tilings $\mathcal T_0,\mathcal T_m,\mathcal T_{2m},\dots$ as before. When doing that, 
  we need to determine the numbers of crowns in native blue indices.
  We define them 
  to be the  numbers of the actual crowns 
in $\mathcal T_{im}$.
  This choice satisfies all the restrictions we imposed on the blue indices of tiles $B\in\cld_s(A)$.
  Indeed, since $\mathcal T_{im}$ is a substitution tiling, those crowns are allowed.
Other restrictions hold by obvious reasons.
\end{proof}

To prove Theorem~\ref{th0}(b), it remains to prove the converse:

\begin{prop}\label{th8}
  If $\mathcal T$ is a proper tiling with legal enriched tiles,
  then  $\pi(\mathcal T)$ is a $\tau$-substitution tiling.
\end{prop}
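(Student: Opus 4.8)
The plan is to derive Proposition~\ref{th8} from Lemma~\ref{l22}. Given a proper tiling $\mathcal T$ with legal enriched tiles, iterate Proposition~\ref{th2a}: it produces a sequence $\mathcal T_0=\mathcal T,\mathcal T_m,\mathcal T_{2m},\dots$ of proper tilings with legal enriched tiles, each a $\sigma''$-composition of the previous one. As in the non-enriched case one has $\pi(\sigma''(A))=\sigma(\pi(A))$, hence $\sigma\bigl(\pi(\mathcal T_{(i+1)m})\bigr)=\pi(\mathcal T_{im})$, so every $\pi(\mathcal T_{(i+1)m})$ is a $\tau^m$-composition of $\pi(\mathcal T_{im})$; moreover $\pi$ of a proper tiling is side-to-side. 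Thus the sequence $\pi(\mathcal T_0),\pi(\mathcal T_m),\pi(\mathcal T_{2m}),\dots$ witnesses that $\pi(\mathcal T)$ is $\tau^m$-hierarchical, and by Lemma~\ref{l22} it remains only to prove that \emph{every crown of every $\pi(\mathcal T_{im})$ is allowed}.

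To do this, fix $i$ and a vertex $V$ of $\pi(\mathcal T_{im})$. Since all prototiles are congruent axis-aligned squares, $\pi(\mathcal T_{im})$ is an axis-aligned grid of congruent squares; its unique partition into $\sigma$-macrotiles --- each a $k\times k$ block $\pi(\mathcal S)=\mathcal M_j$ --- is a macro-grid, and the crown at $V$ is simply the $2\times2$ block of cells around $V$. If $V$ lies in the interior of a single macrotile $\mathcal S$, then $V$ is an interior vertex of the supertile $\mathcal M_j$ and the crown at $V$ is allowed by the very definition of an allowed crown. The real work is the remaining case, where $V$ lies on the boundary of the macro-grid: either in the relative interior of a macro-edge (then two macrotiles meet $V$) or at a macro-corner (then four macrotiles meet $V$).

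In that case one exploits the extra data stored in enriched tiles: on a non-net side of a legal tile of type $s$ that has an endpoint on the macrotile boundary, the blue index carries the number of an allowed crown ``consistent with $s$'' at that endpoint, and the numbers attached to two non-net sides of $s$ meeting at such an endpoint agree. The key combinatorial fact is that, since $1<l<k$, the row and column of a macrotile adjacent to its boundary are never the central row or column; consequently every tile of a macrotile incident to $V$ has, at $V$, non-net sides --- a border side pointing into that macrotile, and/or secondary ports running along the boundary --- and not merely net sides. Walking around $V$ from one incident tile to the next --- through a shared border side inside a macrotile (the indices match because the macrotile is properly decomposed), through adjacent secondary ports across a macro-edge (the indices match because $\pi(\mathcal T_{im})$ is proper and secondary ports abut secondary ports), and between two non-net sides of one tile at $V$ (their crown numbers agree by the rule just quoted) --- one forces all tiles incident to $V$ to record one and the same crown number $n$. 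By legality, $n$ is the number of an allowed crown $K$ that is consistent with \emph{each} tile incident to $V$, placed in the slot matching its position relative to $V$; hence, after erasing colours, $K$ coincides with the crown at $V$, so that crown is allowed. Together with the easy case, this shows that every crown of every $\pi(\mathcal T_{im})$ is allowed, and Lemma~\ref{l22} yields that $\pi(\mathcal T)$ is a $\tau$-substitution tiling.

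The step I expect to be the main obstacle is exactly this ``walking around $V$'' argument: one must check, uniformly over all orientations of macro-edges and over edge-interior versus corner vertices, that the sides used are genuinely non-net (so that they do carry crown numbers) and that the resulting chain of equalities really closes up around $V$; the hypothesis $1<l<k$ is precisely what prevents net sides from breaking the chain.
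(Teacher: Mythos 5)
Your proposal is correct and follows essentially the same route as the paper: iterate Proposition~\ref{th2a} to obtain the sequence witnessing that $\pi(\mathcal T)$ is $\tau^m$-hierarchical, prove that every crown in every $\pi(\mathcal T_{im})$ is allowed by propagating the crown numbers stored in the blue indices around each boundary vertex $V$, and conclude with Lemma~\ref{l22}. The one point you flag as a possible obstacle (a main port incident to $V$, which carries no crown number) is resolved in the paper exactly as your setup anticipates: since main ports do not share vertices there is at most one such side at $V$, so the chain of equalities along the remaining non-net sides still links all tiles at $V$, and the two tiles flanking the main port are still forced to be consistent with the common crown via their other, non-net sides.
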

\begin{proof}
 We first show that all  crowns in the projection of any proper tiling are allowed.
Indeed,  let a vertex $V$ of a proper tiling $\mathcal T$ 
 be given.
 By Proposition~\ref{th2a} it has a  $\sigma''$-composition $\mathcal T' $. In particular, 
$\mathcal T$ 
can be partitioned into $\sigma''$-macrotiles.
  If $V$ is an interior vertex of a macrotile, then the crown in $V$ is contained in a macrotile and therefore is allowed.
  Otherwise, $V$ lies on the boundary of a macrotile. Let us first assume that all sides incident to $V$ do not belong to the net.
  On all such sides, the blue index contains the number of some allowed crown, and this number is the same for all sides.
  Moreover, for each tile with vertex $V$, this number is consistent with the form of this tile.
  Therefore, the crown in this vertex can only be the one whose number is specified in the
  blue indices, and therefore it is allowed.
  
Now  assume  that some net side is  incident to $V$. This side is then a main port. 
Since main ports do not share vertices, there is  only one such side. 
It remains to note that in the previous argument we could admit one non-net side,
  since the crown defined by all other sides is also consistent with the tiles that share this side.

Now we  show that, moreover, the projection of every proper tiling $\mathcal T$ is a $\tau$-substitution tiling.
 By Proposition~\ref{th2a} there exists a sequence $\mathcal T_0=\mathcal T, \mathcal T_m, \mathcal T_{2m},\dots$ of proper tilings   
 in which each tiling is a $\sigma''$-composition of the previous one.
 Then in the sequence $\pi(\mathcal T_0)=\pi(\mathcal T), \pi(\mathcal T_m), \pi(\mathcal T_{2m}),\dots$  each tiling is a composition of the previous 
 one w.r.t. $\sigma=\tau^m$.
As we have shown, all crowns in all these tilings are allowed.
 By Lemma~\ref{l22} the tiling $\pi(\mathcal T_0)$ is a substitution tiling.
Proposition~\ref{th8} is proved. 
\end{proof}
 
Theorems~\ref{th0}(b) and~\ref{th22}(b) are proved.




\section{The Assumptions on Substitution  for the Main Theorem}

The  assumptions for a substitution $\tau$ are obtained by stipulating two 
conditions under which the above technique works.
Now we do not assume that all prototiles are squares.
Instead we   assume that all $\tau$-supertiles are side-to-side tilings:
\renewcommand{\labelenumi}{R\theenumi:}
\begin{enumerate}\addtocounter{enumi}{-1}
\item\label{r0}
All $\tau$-supertiles are side-to-side tilings and hence
every $\tau$-substitution tiling
is side-to-side. 
\end{enumerate}
Under this condition, every
$\tau$-substitution tiling is $\tau$-hierarchical (Lemma~\ref{th11}).  
This is our first assumption.
 
The second  assumption is much more complex: 
we assume that for some $m$ in each $\tau^m$-macrotile one can choose a central type and a net,
similar to central types and nets used in the above arguments.
But this time net paths can bend. In order to make 
main ports  adjacent, we will stipulate certain
requirement called R\ref{r3}. Besides that
the naming of sides can be more complicated, we will stipulate that it is possible to choose
names for sides so that certain conditions are fulfilled.
 
More specifically, we will assume that there exists a ``markup'' of some power of the given substitution $\tau$
meeting certain requirements.
To mark up a substitution we have:
\begin{itemize}
\item 
To assign to each side of each prototile a name so that
different sides of the same prototile have different names.
We  call sides of different tiles with the same name
\emph{eponymous}. When we say \emph{$z$th side of $A$}, we mean  the side of $A$ with the name $z$. The macroside $\sigma(z$th side of  $\alpha_i$)
is called \emph{$z$th macroside of $\mathcal M_i$}.
\item 
To choose a central  type in each macrotile.
\item 
To choose  net paths in each  macrotile.
Each net path in $\mathcal{M}_i$ is a sequence of sides in $\mathcal{M}_i$
such that consecutive sides in this sequence belong to the same tile.
The number of net paths  in $\mathcal{M}_i$ is equal 
to the number of sides of $\alpha_i$, the path $P_z$ begins  with an  outer side 
on 
$z$th macroside of $\mathcal M_i$ and
  ends with a side of the central tile, which must have the same name $z$,
  see Fig.~\ref{namer}. Outer net sides are called \emph{main ports} and other outer sides are called \emph{secondary ports}.
\end{itemize}

\begin{figure}
\begin{center}
\raisebox{1.5cm}{Good:}   \includegraphics[scale= 1]{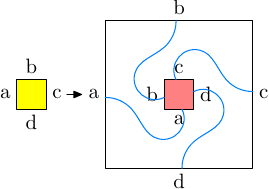}
\hskip 1cm
\raisebox{1.5cm}{ Bad:}  \includegraphics[scale= 1]{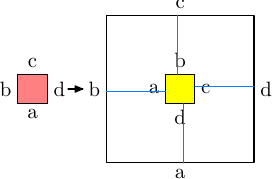}
  \end{center}\caption{The mark up of the second macrotile  
  is bad, as some path connects a side of the central tile with a non-eponymous macroside}\label{namer}
  \end{figure}

The markup must satisfy the following conditions. These conditions
ensure that all Observations, Lemmas and Propositions can be proven in the 
same way as before. After each condition we point to the proof where we need it.
\renewcommand{\labelenumi}{R\theenumi:}
\begin{enumerate}
\item
All sides of the central type are inner sides.
This property was used in the proof of item (b)
in the Composition Lemma for $\sigma''$.   \label{r1} 

\item Net paths cannot share sides. In particular, the central tile in $\mathcal M_i$
must have at least as many sides as the prototile $\alpha_i$. Without this requirement
the definition of blue-green indices on net sides would be incorrect.\label{r2}

\item There must be at least one secondary port on each macroside.
This condition is needed to match red indices of adjacent composed tiles.

\item 
Any  border side does not belong to the net. Otherwise the definition of green indices on border
sides would be incorrect.\label{r7}
Besides, this property is used in the proof of Proposition~\ref{th8}.

\item Different main ports cannot share  a vertex of a tile.
This property is used in the proof of Proposition~\ref{th8}.\label{r5}

\item
In any non-overlapping connection of two macrotiles (without rotation or reflection)
macroside-to-macroside and side-to-side, 
each main port must be adjacent to a main port, and therefore
each secondary port to a secondary port.
This property is used in the proofs of both Propositions~\ref{th1} and~\ref{th2}.
 \label{r3} 

\item Let $A,B$ be prototiles and $a,b$ names of sides.
Call a quadruple $\pair{A,a,B,b}$ \emph{admissible}
if there is a shift $A'$ of $A$ that does not overlap with $B$ and such that 
$a$th side of $A'$ coincides with $b$th side of $B$.

We require that \emph{there is no admissible quadruple of the form 
$\pair{A,z,B,z}$.}
\ie{See the picture:
\begin{center}
 \raisebox{.75cm}{Good:}   \includegraphics[scale= 1]{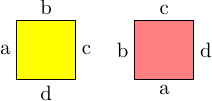}\hskip 1cm  \raisebox{.75cm}{Bad:}   \includegraphics[scale= 1]{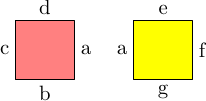}
  \end{center}
 This requirement is needed for Observation~\ref{o-diff1}.\footnote{Usually this requirement is satisfied because
the angle between inward-directed  normals to eponymous sides of different tiles 
is different from $180^\circ$.
(In an admissible connection of sides $a$ and $b$, the normals to the sides
look in opposite directions, so the angle between them is $180^\circ$.)}
}
\label{r4}

\item ``$2/3$ Requirement''.
In any non-central type
less than two-thirds of its sides
belong to the  net. In other words, if a non-central type has $n$ sides, then it is traversed by less than $n/3$ net paths.
For example, if the number of
sides of a tile is 3, then no net path traverses the tile,
if 4--6, then at most one path traverses the tile, if
7--9, then at most two paths, and so on.  See Fig.~\ref{fr6}. \label{r8}
\begin{figure}
 \begin{center}
  \raisebox{2cm}{Good:}  \includegraphics[scale=.5]{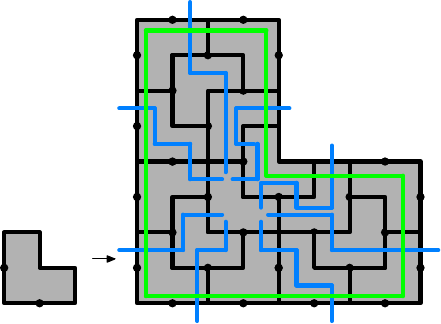}
  \raisebox{2cm}{Bad:}  \includegraphics[scale=.5]{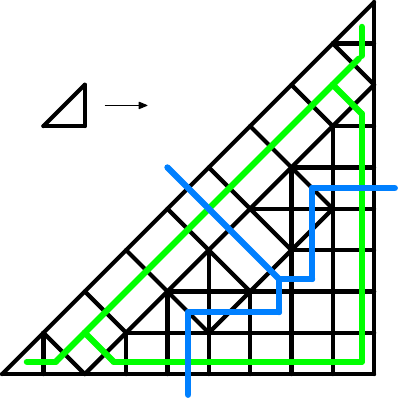}
  \end{center}
  \caption{On the left is a decomposition of a non-convex octagon 
  and its markup satisfying $2/3$ Requirement.
  On the right there is a decomposition of a triangle and its markup that do not satisfy this requirement.
  }\label{fr6}
\end{figure}
Without this condition Dichotomy Lemma might not hold.
\end{enumerate}

\begin{remark}
Without loss of generality we may assume that 
the name of each side of a prototile is 
equal to the equivalence class of this side for the following equivalence relation $R$: 
the relation $R$ is  the transitive closure of the graph of the function 
$$
a\mapsto\text{the side of the central tile
connected by a net path with }\sigma a.
$$
If this naming does not satisfy the requirements, then no naming does.
Therefore we do not specify names of sides in the examples.
\end{remark}

\begin{remark}
Requirements R1--R7 are similar 
 to the requirements for substitution from the paper
of Fernique --- Ollinger. 
However, as we will see below, these conditions are not enough.
\end{remark}

We provide two examples of markups of non-rectangular macrotiles  that meet
all the requirements.

\begin{ex}
The markup of the third substitution is shown in Fig.~\ref{f7}.
\begin{figure}
\begin{center}
\includegraphics[scale=.34]{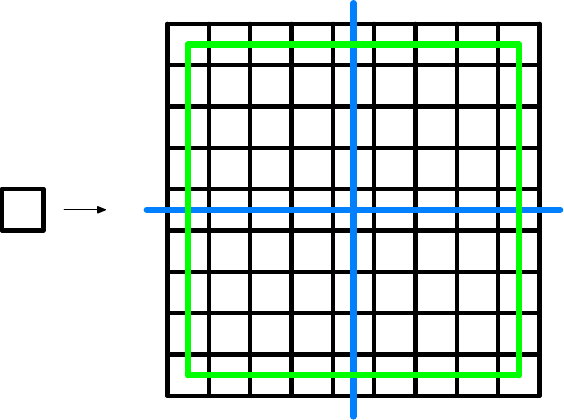}
\includegraphics[scale=.34]{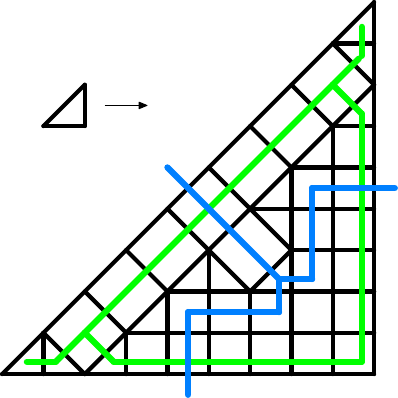}
\includegraphics[scale= .34]{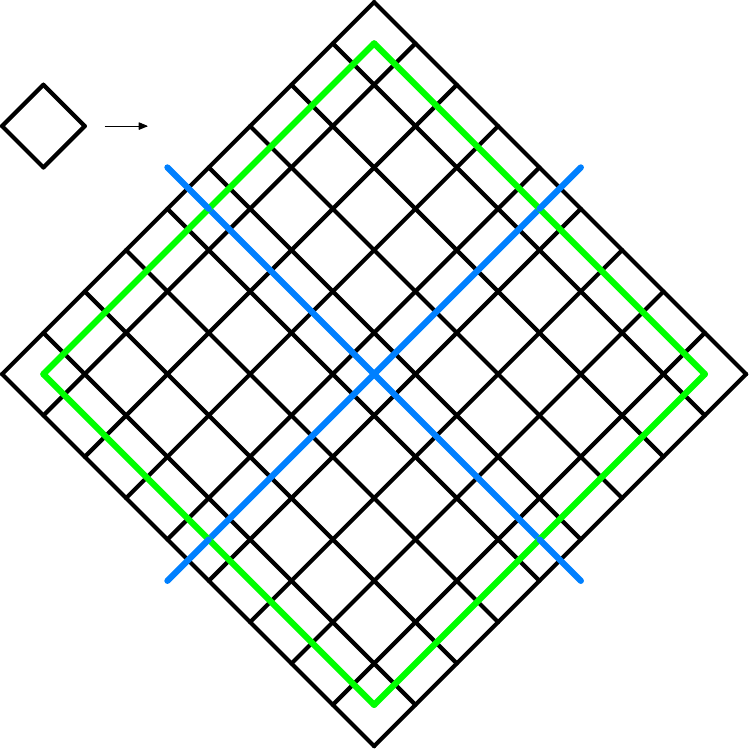}
\end{center}\caption{The markup for the macrotiles
of the third substitution.%
}\label{f7}
\end{figure}
For this markup, all sides
have different names. 
This example is interesting because some prototiles are triangular.
The net paths are forced to bypass triangular tiles to satisfy $2/3$ Requirement.
\end{ex}

\begin{ex}
Fig.~\ref{f55} shows a markup of another substitution,
\begin{figure}
\begin{center}
\includegraphics[scale=.4]{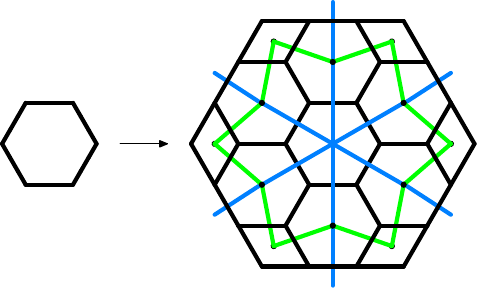}
\hskip .5cm
\includegraphics[scale=.4]{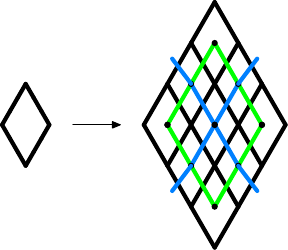}
\hskip .5cm
\includegraphics[scale= .4]{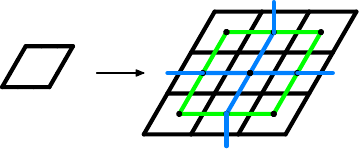}
\hskip.5cm
\includegraphics[scale=.4]{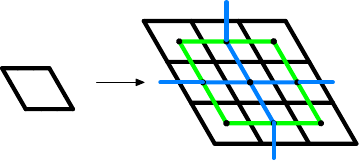}
\end{center}\caption{The fourth substitution and its markup.
}
\label{f55}
\end{figure}
satisfying all the requirements. 
\end{ex}

Yet another good markup is shown on the left in Fig.~\ref{fr6}. 


\section{Main Theorem}

\begin{theorem}\label{th-main}
Assume that a substitution $\tau$ 
satisfies R0 and some its power admits a markup satisfying conditions R1--R8. Then
(a) the 
family of \emph{$\tau$-hierarchical} tilings is sofic and 
(b) the 
family of \emph{$\tau$-substitution} tilings is sofic.
\end{theorem}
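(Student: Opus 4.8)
The strategy is to reduce the general case to the case already handled for square tiles in Section 2, by observing that every step of that argument was written so as to depend only on the structure provided by a good markup, not on the tiles being squares. Concretely, the plan is: (1) invoke the markup hypothesis to fix a power $\sigma=\tau^m$ and a markup of $\sigma$ satisfying R1--R8, and (2) repeat verbatim the construction of legal tiles, the substitution $\sigma'$ (and its enriched variant $\sigma''$ with crown labels), and the proofs of the Dichotomy Lemma, the Composition Lemma, and Propositions~\ref{th1}, \ref{th2}, \ref{th1a}, \ref{th2a}, \ref{th8}, checking at each place that only R0--R8 were used and never the squareness of tiles.

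\textbf{Part (a): hierarchical tilings.} First I would note that R0 guarantees (via Lemma~\ref{th11}) the basic side-to-side behaviour that replaces "all prototiles are squares" in the proof of Theorem~\ref{th0}(a); in particular every $\tau$-supertile is side-to-side, so red indices on outer sides make sense and composed tilings are side-to-side. Then I would re-run the decoration of Section~\ref{s43}: central index, red contour forcing macrotile assembly (using R3 for secondary ports and R6 for macroside-to-macroside adjacency of main ports), green indices carrying global information along net paths (well-defined by R2, R4 since net paths do not share sides and border sides are outside the net), and blue indices encoding admissible quadruples (R7 gives "no admissible quadruple $\pair{s,z,t,z}$", which is exactly what Observation~\ref{o-diff1} needs). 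The Dichotomy Lemma goes through as before: the central-cyclic case is unchanged; in Case 1a we apply Observation~\ref{o-diff1}, now justified by R7 in place of rectangularity; in Case 1b we use the "$2/3$ Requirement" R8 to guarantee that a non-central type with $n$ sides has $<n/3$ net paths, hence $s$ has two net sides whose eponymous sides in $t$ are non-net, and we apply Observation~\ref{o-diff2}, whose only use of rectangularity was the "Outer Sides Requirement", now supplied by R7/R4. The Composition Lemma is identical, with item (b) using R1 (all sides of the central type are inner), and item (d) using Cyclicity together with Observation~\ref{rem2}. Propositions~\ref{th1} and~\ref{th2} then follow word for word, and as in Section 2 we conclude that a tiling is $\tau$-hierarchical iff it is the projection of a proper tiling with legal tiles; hence that family is sofic.

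\textbf{Part (b): substitution tilings.} Here I would first apply Lemma~\ref{th11}: under R0 every $\tau$-substitution tiling is $\tau$-hierarchical and has a composition that is again a substitution tiling, so the easy direction (Proposition~\ref{th1a}) goes through with crown numbers chosen to be the actual crowns. For the converse I would use the enriched tiles and $\sigma''$ exactly as in Section 2: augment the blue index on every non-net side having an endpoint on the macrotile boundary with the number of a compatible allowed crown, with the consistency constraints across sides sharing such a vertex. The bullet-by-bullet verification that Observation~\ref{rem2}, the Dichotomy Lemma, the Composition Lemma and Proposition~\ref{th2} survive the enrichment is formally the same; then Proposition~\ref{th8} shows all crowns of $\pi(\mathcal T)$ are allowed (using R5 so that at a boundary vertex at most one net side---a main port---is incident, which is exactly the slack the crown argument needs) and Lemma~\ref{l22} upgrades "hierarchical with all crowns allowed" to "substitution tiling". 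This yields that $\tau$-substitution tilings form a sofic family.

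\textbf{Main obstacle.} The proof contains essentially no new idea; the real work---and the only place something could genuinely go wrong---is the verification that each of R1--R8 is \emph{sufficient} to replace the places in Section 2 where squareness was silently used. The two delicate points are (i) confirming that the only uses of rectangularity in Observations~\ref{o-diff1} and~\ref{o-diff2} are precisely the statements "no admissible quadruple $\pair{s,z,t,z}$" and the "Outer Sides Requirement", so that R7 (together with R4) covers them, and (ii) confirming in Case 1b of the Dichotomy Lemma that the "$2/3$ Requirement" R8 really does force the existence of two net sides of $s$ whose eponymous sides in $t$ are non-net (this is a counting argument: $t$ has $<n/3$ net sides, $s$ has $<n/3$ net sides, but if every net side of $s$ had a net eponymous side in $t$ the two net-path families would interleave, contradicting R8 for one of the tiles). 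Since the excerpt has already isolated these as the two conditions needing extra care, the proof amounts to stating the reduction, fixing $\sigma$ and the markup, and remarking that every lemma of Section 2 applies verbatim with the indicated requirement substituted for "squares".
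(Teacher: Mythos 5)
There is a genuine gap, and it is precisely the step the paper spends most of its effort on. Your plan fixes a power $\sigma=\tau^m$ with a markup satisfying R1--R8 and then "re-runs Section 2 verbatim", quoting the Cyclicity Requirement when you need it for item (d) of the Composition Lemma and for the treatment of central acyclic types in Proposition~\ref{th2}. But the Cyclicity Requirement is \emph{not} among R1--R8 and does not follow from them; in Section 2 it was obtained only by passing to a further power (Lemma~\ref{l6}), and in the general setting one must do the same while \emph{transporting the markup} to that power. This is the content of Lemma~\ref{l66} in the paper: one takes $\sigma=\tau^{nl}$ with $c^{2n}=c^n$, defines the central tiles and net paths of the power recursively (concatenating net paths of the constituent $\tau^l$-macrotiles, which uses R6 and R0 to know the paths match up), keeps the side names, and then re-verifies the requirements for the new markup. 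Two of these verifications are substantive: the $2/3$ Requirement for the power needs an induction that uses R2 (the central tile has at least as many sides as its prototile), and R6 itself need not survive the passage to the power --- it is replaced by the weaker R6$'$, which holds by construction and is what Propositions~\ref{th1} and~\ref{th2} actually use. None of this appears in your proposal, and without it the Cyclicity Requirement you invoke is simply unavailable; the Appendix (Section A.4) shows that without it item (d) of the Composition Lemma genuinely fails for central acyclic types, so this is not a removable convenience.

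A second, related error: you claim the Outer Sides Requirement needed in Observation~\ref{o-diff2} is "supplied by R7/R4". It is not. R7 is exactly what Observation~\ref{o-diff1} needs, but the Outer Sides Requirement (no type has parallel outer sides $a,b$ with the type to the left of $a$ and to the right of $b$) does not follow from R7, R4, or any of R1--R8 for the given markup; in the paper it is secured only by choosing the exponent $n$ in Lemma~\ref{l66} so large that every type's outer sides lie on one macroside or on two macrosides meeting at a vertex of that type. So both of the "delicate points" you isolate are resolved in the paper by the same missing ingredient: a further power of the substitution together with a lifted markup and a fresh verification of the requirements (and, on top of that, the general-case proof of the Dichotomy Lemma is not the Section 2 proof re-read --- it needs the extra case where $t$ has a side with no eponymous side in $D_k$, which cannot occur for squares, and a counting bound of the form $2l/3+n/3\le l$ rather than the symmetric count you sketch).
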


In the rest of this section we prove this theorem.

Let $\tau^l$ be the power of $\tau$ that  admits a markup satisfying conditions R1--R8.
We first  show that some power  $\tau^{nl}=\tau^m$ of $\tau^l$
has a markup satisfying 
Cyclicity Requirement (page~\pageref{r9}), Outer Sides Requirement  (page~\pageref{outersides}) and 
all requirements R1--R8 except for~R\ref{r3}. Requirement~R\ref{r3}
will be met in the following reduced form:
\begin{enumerate}
\item[R\ref{r3}':] 
If  tiles $A$ and $B$
share a side $a$ and \emph{for all $i=1,\dots, m$ the tiling $\tau^i \{A,B\}$ 
is side-to-side}, then the main port in $\tau^m A$  
on the superside  $\tau^m a$ matches  a main port of $\tau^m B$.\footnote{Actually, 
R\ref{r3}'  is a property of the pair $\pair{\tau,m}$ rather than 
a property of $\sigma=\tau^m$.}  
\end{enumerate}


\begin{lemma}\label{l66}
Some power $\sigma=\tau^m=\tau^{nl}$ of $\tau$ meets 
all requirements R1--R5,  R7, R8, 
Cyclicity Requirement, Outer Sides  Requirement and~R\ref{r3}'. 
\end{lemma}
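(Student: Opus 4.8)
Write-up plan for Lemma~\ref{l66}.

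\medskip

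The plan is to imitate the proof of Lemma~\ref{l6}, now for general (possibly non-rectangular) macrotiles. Fix the markup $\mathcal N$ of $\tau^l$ that satisfies R1--R8, and construct a markup of $\tau^{jl}$ for every $j\ge1$ by iterating $\mathcal N$ exactly as in Lemma~\ref{l6}: the $\tau^{(j+1)l}$-macrotile of $\alpha_i$ is obtained from its $\tau^{jl}$-macrotile by replacing each tile $s$ with the block $\tau^l s$; the new central type is the $\mathcal N$-central type of the block replacing the old central type; and each old net path $P_z$ is \emph{refined} into a new net path that threads, block by block, along the $\mathcal N$-net paths of the blocks it visits (passing through their $\mathcal N$-central types) and crosses from one block to the next through the matching main ports guaranteed by R\ref{r3} for $\tau^l$ --- this applies because two adjacent blocks sit inside one $\tau^{(j+1)l}$-supertile and hence meet macroside-to-macroside and side-to-side by~R0. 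The side-names of the prototiles are taken to be the canonical ones (the equivalence classes of the relation from the Remark after R8).

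\medskip

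The core of the argument is an induction on $j$ showing that this recursive markup still satisfies R1--R5, R7 and R8; each verification reduces block by block to the corresponding property of $\mathcal N$. For R1 the new central type is an $\mathcal N$-central type of a block whose boundary lies in the interior of the larger macrotile. For R2 distinct refined net paths stay disjoint: inside each block they follow the pairwise disjoint $\mathcal N$-net paths of that block (R2 for $\tau^l$), and they never cross between blocks through a common port, because that would require two old net paths to share a side; the bound ``the central type has at least as many sides as the prototile'' is likewise inherited from R2 for $\tau^l$. R3 holds since by R3 for $\tau^l$ every boundary block already contributes a secondary port to the macroside it borders; R4 and R5 hold because the refinement respects the partition of sides into inner, outer and border sides and because R4, R5 hold inside every block (the only delicate point is near corners of the big macrotile, where one checks that R5 for $\tau^l$ forbids two refined main ports to meet). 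R7 holds because the canonical $\tau^{(j+1)l}$-naming is no coarser than the $\tau^l$-naming, so an admissible quadruple $\pair{\alpha_a,z,\alpha_b,z}$ for the new naming would already be admissible for $\tau^l$. For R8, a non-central $\tau^{(j+1)l}$-type is a tile $w$ of some block $\tau^l s$, and the number of refined net paths through $w$ is at most the number of $\mathcal N$-net paths of $\tau^l s$ through $w$; when $w$ is the central type of its block, this number equals the number of old net paths through $s$, which is less than a third of the number of sides of $s$ by R8 at the previous level, hence less than a third of the number of sides of $w$. The Cyclicity Requirement then comes essentially for free, exactly as at the end of the proof of Lemma~\ref{l6}: writing $c_{\tau^{jl}}$ for the self-map of $\{1,\dots,M\}$ sending $i$ to the index of the form of the central type of the $\tau^{jl}$-macrotile of $\alpha_i$, one has $c_{\tau^{jl}}=(c_{\tau^l})^{j}$, and since there are only finitely many such self-maps, for a suitable, arbitrarily large $N$ one gets $(c_{\tau^l})^{2N}=(c_{\tau^l})^{N}$, i.e.\ $\tau^{Nl}$ satisfies Cyclicity.

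\medskip

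It remains to treat the two genuinely new points. For the Outer Sides Requirement: if some $\tau^m$-type $t$ had parallel outer sides $a,b$ with $t$ to the left of $a$ and to the right of $b$, then, since each outer side lies on a single macroside of $\tau^m\alpha_i$, the prototile $\alpha_i$ would have two parallel sides with opposite outward normals; such sides lie on distinct parallel lines at some distance $\delta_i>0$, while $t$, of diameter at most $\Delta:=\max_i\operatorname{diam}\alpha_i$, would have to meet both corresponding macrosides, forcing $\theta^m\delta_i\le\Delta$; hence the requirement holds as soon as $\theta^m>\Delta/\min_i\delta_i$, which we ensure by enlarging $N$. For R\ref{r3}' we argue by induction on $n$: given tiles $A,B$ sharing a side $a$ with $\tau^i\{A,B\}$ side-to-side for $i=1,\dots,nl$, the inductive hypothesis produces a side $p$ that is simultaneously the main port of $\tau^{(n-1)l}A$ and of $\tau^{(n-1)l}B$ on $\tau^{(n-1)l}a$; writing $s_0\in\tau^{(n-1)l}A$ and $s_0'\in\tau^{(n-1)l}B$ for the tiles sharing $p$ and using that $\tau^{nl}\{A,B\}$ is side-to-side (so $\tau^l\{s_0,s_0'\}$ is side-to-side), R\ref{r3} for $\tau^l$ makes the main ports of $\tau^l s_0$ and $\tau^l s_0'$ on $\theta^l p$ coincide; by the construction of the refined markup these are exactly the main ports of $\tau^{nl}A$ and $\tau^{nl}B$ on $\tau^{nl}a$. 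Taking $m=Nl$ large enough for the Cyclicity and Outer Sides bounds simultaneously completes the proof.

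\medskip

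I expect the main obstacle to be the bookkeeping in this inductive verification that the recursive refinement of net paths is well-defined and preserves all of R1--R8 --- in particular R2, R8, and the behaviour near macrotile corners for R5, together with the splicing of net paths at block boundaries through R\ref{r3} for $\tau^l$; by comparison, the geometric diameter estimate for the Outer Sides Requirement and the finite-semigroup argument for Cyclicity are routine.
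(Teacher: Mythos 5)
Your proposal follows essentially the same route as the paper: the same recursive refinement of the markup of $\tau^l$ into markups of $\tau^{jl}$ (new central type inside the block replacing the old one, new net paths spliced from block net paths through the matching main ports given by R6 for $\tau^l$), the same finite-semigroup argument for the Cyclicity Requirement, the same block-by-block induction for R1--R5, R7, R8 (including the two-case count for the $2/3$ Requirement), and a ``take $m$ large'' diameter argument for the Outer Sides Requirement, with R6$'$ checked by induction along the construction. One small repair is needed in your Outer Sides step: two parallel sides of a prototile with opposite outward normals need not lie on \emph{distinct} parallel lines (e.g.\ a Z-shaped polygon has two collinear sides with the interior on opposite sides), so $\delta_i$ should be taken as the positive distance between the two sides as point sets rather than between their supporting lines; with that change the bound $\theta^m\delta_i\le\Delta$ and the rest of the argument go through unchanged.
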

\begin{proof}
Let $c$ denote the function acting 
central types for substitution $\tau^l$.
That is, $c(t)$ is the 
central type in the macrotile $\tau^l t$.
As we know from the proof of Lemma~\ref{l6},
for some $n>0$ we have $c^{2n}(t)=c^n(t) $ for all $t$. 
In other words, $c^n(t)$ is a fixed point of the function $c^n$ for all $t$.

Consider the $n$-th power of the substitution $\tau^l$ for any $n>0$ with  $c^{2n}=c^n$.
Let us mark up $\tau^{jl}$-macrotiles  for $j=1,2,\dots,n$
recursively:\\ 
(1) The markup for  $\tau^l$ is the given markup.\\
(2) We keep the same names of sides for
 $\tau^{(j+1)l}$ as they are for $\tau^{jl}$. 
Recall that the $i$th $\tau^{(j+1)l}$-macrotile   $\mathcal M'_i$ 
  is obtained from $i$th  $\tau^{jl}$-macrotile $\mathcal M_i$ 
by replacing each tile by the corresponding macrotile for $\tau^l$, see Fig.~\ref{f4-9}.
\begin{figure}
  \begin{center}
    \includegraphics[scale=1.1]{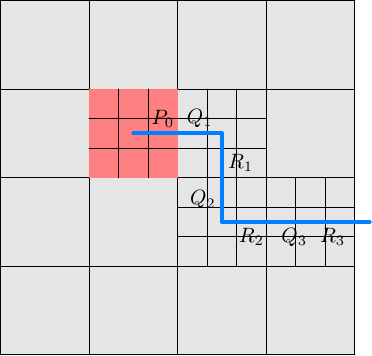}
  \end{center}
  \caption{The markup of substitution  $\tau^{(j+1)l}$.
The figure shows a part of a macrotile for $\tau^{(j+1)l}$.
Three by three squares represent macrotiles for the original substitution $\tau^{l}$.
The central macrotile is pink.}\label{f4-9}
\end{figure} 
As the central tile of $\mathcal M'_i$  choose
the central tile in the macrotile $\mathcal M$ 
which replaces the central tile of $\mathcal M_i$.
Then choose the net paths in $\mathcal M'_i$  as follows.
Let $a_0,a_1,\dots,a_k$ denote the sides of a net path in $\mathcal M_i$, where 
 $a_0$ is a side of the central tile and $a_k$ is a main port.
 Let $A_0,A_1,\dots,A_k$ denote the tiles from $\mathcal M_i$
 which these sides belong to.
Let $P_0$ denote the net
path in $\mathcal M=\sigma^l A_0$
 that ends on the macroside $\tau^l a_0$ at a main port $b$. 
Let $Q_1$ denote the net path in the macrotile $\tau^l A_1$
that connects the main port $b$ with its central tile.
Let $R_1$ denote the net path in  $\tau^l A_1$
from its central tile
to its macroside shared with $\tau^l A_2$.
And so on. Our path is the concatenation 
of paths $P_0,P_1,Q_1,\dots,P_k,Q_k$.
Recall that we assume that the markup for $\tau^l$ 
satisfies the Requirement R\ref{r3} and that all supertiles are 
side-to-side tilings. This implies that all these paths exist
and their concatenation is a path.

Let us verify all the requirements for this markup of $\tau^{nl}$.
\begin{itemize}
\item Cyclicity Requirement holds by the choice of $n$.
\item R1--R5  for $\tau^{nl}$ follows from that for $\tau^l$.
\item R\ref{r3}' holds by construction.

\item R\ref{r4} holds, as  
 sides of prototiles keep their names.
\item R8 ($2/3$  Requirement). 
Let us show by induction that all
substitutions $\tau^{jl}$ for $j\le n$ meet this requirement. 
Let $B$ be a non-central tile from a macrotile $\tau^{(j+1)l} C$. 
Denote by $A$ the tile in $\tau^{jl} C$ such that $B\in \tau^l A$.
Then $B$ is a non-central tile in $\tau^l A$ or $A$ is a non-central tile in 
$\tau^{jl} C$. 

In the first case, in the markup of $\tau^{(j+1)l} C$,  only those 
net paths can include $B$ whose subpaths lying in  $\tau^l A$ include $B$.
Those subpaths are pair wise different by Requirement R\ref{r4}, since they connect the central tile of $\tau^l A$ 
to different macrosides of $\tau^l A$.
The number of such subpaths 
is at most one third of $B$'s sides by $2/3$  Requirement for $\tau^l$.

Otherwise $B$  is 
the central tile in $\tau^l A$ and $A$ is a non-central tile in 
$\tau^{jl} C$. 
The number of net sides of $B$ is twice the 
number of net paths from $\tau^{jl} C$ which $A$ belongs to. The latter is at most one third of the number 
of sides of $A$, as $\tau^{jl}$ meets $2/3$  Requirement by Induction Hypothesis.
By the Requirement R\ref{r2} for $\tau^l$, the number of sides of $B$ is at least that of $A$ 
hence $2/3$  Requirement holds for $B$.

\item Outer Sides Requirement. 
Note that $n$ can be chosen arbitrary large.
Choose $n$ 
so large that the substitution $\sigma=\tau^{nl}$
has the following property: if a type $t$ 
from a macrotile   has an outer side $a$ on a macroside $U$,
then all its outer sides lie on $U$  or on the macroside $V$ 
that shares a vertex with $U$  and the tile itself. 
  \begin{center}
    \includegraphics[scale=1]{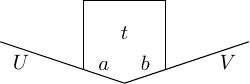}
  \end{center}
This property implies Outer Sides Requirement. Indeed, if $b$ is another outer side of $t$, then
$b$ lies on $U$ or on $V$. 
In the first case $t$
is either to the left of both sides $a,b$, or to the right. The same 
 holds if $b$ lies on $V$ and the angle between $U$ and $V$ is $180^\circ$.
Otherwise $a$ and $b$ are not parallel.
\qed
\end{itemize}
\renewcommand{\qed}{}
\end{proof}
  
We fix any $m$ satisfying this lemma 
and consider the substitution
$\sigma=\tau^m$. 
The second step is to define the set of decorated tiles. 
For both statements (a) and (b) the substitutions  $\sigma'$ and  $\sigma''$
are defined in the same way as before. 
\begin{remark}
The substitution $\sigma'$ on decorated tiles 
is similar to the Fernique --- Ollinger substitution from~\cite{fo}.
The main difference is in the blue indices, which in our construction
carry more information. Due to this, Observation~\ref{o-diff2} in the proof of the Dichotomy Lemma
below holds. That observation
may not hold for the Fernique --- Ollinger construction.
\end{remark}

Then we define legal tiles and enriched legal tiles
exactly as before.
All properties of legal tiles, including Dichotomy Lemma, remain valid
with a little modification of the statement of Observation~\ref{rem2}.
Now it reads:

\textbf{Observation~\ref{rem2}.}
 \emph{Let a type $s$ from a $\sigma$-macrotile $\mathcal{M}_i$ be fixed. 
Then the set $\cld_s(A)$ 
depends only on the tuple consisting  of blue-green indices
on $z$th sides of  $A$ such that  the net path $P_z$ passes through tile $s$
and on the type of 
$A$ and its red indices provided 
$s$ is a border type.}

All those properties are proved in a similar way except for Dichotomy Lemma, whose proof
is more complicated in general case.
 Requirements R7 and R8 are needed only in that proof. Let us remind its statement:

\smallskip
\textbf{Dichotomy Lemma.}
\emph{Let a legal tile $D$ and a central cyclic or non-central type $t$
of the same form as $D$ be given.
Then 
either $D$ is similar to some legal tile of type $t$,
or there is a name $z$ such that
$D$ is not similar to \emph{any} legal tile of type $t$ on $z$th side.}

\begin{proof}[Proof of the Dichotomy Lemma for $\tau^m$]
First note that Observations~\ref{o-diff1} and~\ref{o-diff2} 
remain valid, as their proofs are valid also
for non-rectangular tiles under R7 and Outer Sides Requirement.

Assume first that $t$ is a central cyclic type. 
In this case the first alternative holds, which is proved in  the same way, as before.

Otherwise $t$ is a non-central type.
Let 
$$
\dots\to D_2\to D_1\to D_0=D
$$ 
be a sequence of tiles that prove legality of $D$. 
Again we distinguish  two cases.  

\emph{Case 1:} for some $k=0,1,\dots$ the tile $D_k$ has a non-central type, say type $s$.
Consider the minimal  such $k$. 
If $s=t$, then the first alternative holds, since the blue-green
index of $D_k$
passes unchanged to $D$.
We claim  that otherwise the second option holds.

First note that for any side $a$ of the tile $D_k$, the tile $t$
has a side eponymous to $a$.
Indeed,
in the sequence $D_{k},D_{k-1},\dots,D_0=D$ each tile 
 is the central
child of the previous one and hence inherits from it names of sides. The tile $t$ also has such a side,
since by assumption  $D$ and $t$ are of the same form.   

Now we consider three cases
in which we can prove the required statement,
and then we establish that one of the cases always holds.

\begin{enumerate}
\item[(a)] There is a  \emph{non-net} side in type $t$ such that 
 $D_k$ has no eponymous side.
  Let $z$ denote the name of that side. Since $D_0=D$ and $t$ have the same form,
  the tile $D_0$ has a side named $z$.
  Consider the smallest $i\le k$ for which  $D_i$
  has no $z$th side.
   \begin{center}
    \includegraphics[scale=1]{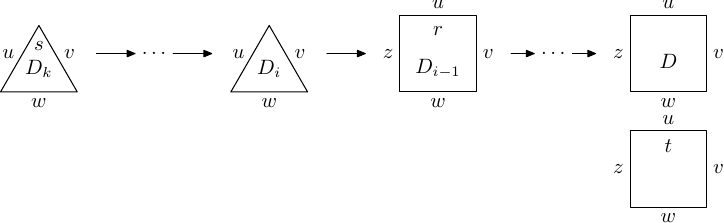}
  \end{center}
  Then  the tile $D_{i-1}$  has $z$th side. As $i\le k$ the tile $D_{i-1}$  is of some central type $r$.
  All net sides of $D_{i-1}$ 
borrow their names from $D_i$ hence $z$th side of $D_{i-1}$ does not belong to the net. 
 Being central, the type $r$  is different from $t$.
By Observation~\ref{o-diff1} 
the blue index on the $z$th side of $D_{i-1}$ (and hence on $z$th side of $D$)
is different from that of any legal type-$t$ tile.

\item[(b)] There is a \emph{non-net} side in type $t$
  which has eponymous  \emph{non-net} side in $D_k$. 
Then again by Observation~\ref{o-diff1}
the blue index on that  side  of $D_{k}$ (and hence of $D$)
is different from from that of any legal type-$t$ tile (recall that $s\ne t$).

\item[(c)] There are two different sides with names $a,b$ in the tile $D_k$ 
that belong to the same net path and
both $a$th and $b$th sides of the tile $t$ are \emph{non-net} sides.
 \begin{center}
    \includegraphics[scale=1]{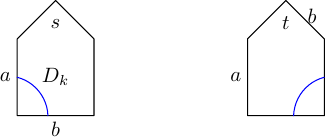}
  \end{center}
 Blue indices of $a$th and $b$th sides of $D_k$ coincide, denote them by $y$.
The same is true for tile $D$, since $D$ inherits these indices from $D_k$.
By Observation~\ref{o-diff2} there is $z\in\{a,b\}$ such that
$y$ is different from the blue index on $z$th side of any legal type-$t$ tile. 
\end{enumerate}

Why does one of  cases (a), (b) or (c) always happen? Let the number
of sides in tiles $D_k$ and $t$ be $n$ and $l$, respectively. As explained above, $n\le l$.
Suppose that (c) does not hold.
We claim that (a) or (b) then holds. To prove this, it suffices to establish that
the number of names $z$  for which $z$th side 
is a net side  in $D_k$ or in $t$ is less than $l$.

This number is equal to the sum of two numbers:
the number $T$ of names $z$ for which the $z$th side  belongs to
the net in tile $t$, plus the number $U$
of names $z$ for which the $z$th side  
is in the net in $D_k$ but not  in $t$.
The number $T$ is less than $2l/3$ by $2/3$ Requirement.
And the  number $U$ is less than $n/3$. Indeed, the net sides
in tile $D_k$ are grouped  into pairs, each pair of sides
belongs to one path of the net. By $2/3$ Requirement the number of pairs
is less than $n/3$. Moreover, in each pair at most one side
can contribute to $U$, since we assume the negation of (c).
In total, we get less than $2l/3+n/3\le l$.

\emph{Case 2:} for all $k=0,1,\dots$ the tile  $D_k$ is of central type. We claim that then the second alternative holds.

To prove the claim, we use a simple corollary of $2/3$ Requirement:
\emph{any non-central type has at least one non-net side.}
Therefore, the type $t$ has a side on which the blue index is
native. Choose any such side and denote by $z$ its name.
We claim that the blue index on $z$th side of $D$ is different from that of any legal type-$t$ tile.

To prove the claim, choose any legal type-$t$ tile $B$.
Since the blue index on $z$th side
of  $B$ is native, it is different from $0$. If
 $D$ has zero blue index on $z$th side, then we are done. Otherwise, it
originates in some tile $D_k$. In that tile, $z$th side does not belong
to the net. Since $D_k$ has a central type, its type is different from $t$. By Observation~\ref{o-diff1} 
the blue index of $B$ on $z$th side
is different from that of $D_k$ and hence of $D$.

The lemma is proved.
\end{proof}

\begin{remark}
It follows from the proof of the Dichotomy Lemma that $2/3$ Requirement
can  replaced by the following weaker condition:
\renewcommand{\labelenumi}{R\theenumi:}
\begin{enumerate}\addtocounter{enumi}{4}
\item[R8':] 
For any ordered pair $\pair{s,t}$ of  different non-central types
 at least one of the following three conditions holds:
\begin{itemize}
\item There is a side in $s$ with no eponymous side in $t$.
\item There is a non-net side in $t$ with no 
  eponymous \emph{net} side in $s$.
\item There are two sides in $s$ that belong to the same net path for which
  both eponymous sides in $t$ do not belong to the net.

\end{itemize}
\end{enumerate}
We first derived condition R8' from $2/3$ Requirement,
and then used it in the analysis of Case 1. The advantage of the stronger $2/3$ Requirement over R8'
is that it can be verified much faster.
\end{remark}

The rest of the proof of Theorem~\ref{th-main}
is as before. The only thing worth to notice is that in the very end 
of the proof of (b) we use R4  and R5.

\section{On the Applicability  of the Main Theorem}

It may seem that the requirements R1--R8 greatly limit the applicability of Theorem~\ref{th-main}.
But in fact this is not the case, as we will try to convince the reader.
The following two obstacles usually prevent the desired markup of macrotiles.
(1) The lack of space in macrotiles for placing net paths and (2) 
a large number of tiles $A$ such that there is  a tile $B$ which shares one third or more of its 
sides  with $A$, for instance, a large number of triangular tiles; this makes $2/3$ Requirement hard to satisfy.
Let us consider in turn how to overcome these obstacles.

\subsection{Increasing Space in Macrotiles}
The first obstacle is the lack of space in macrotiles. 
Usually
it can be overcome by considering a sufficiently large power of the original substitution.
%
We will demonstrate this using the Chair Substitution~\cite{ra,gs1} (see Fig.~\ref{f222}).
For this substitution, it is impossible to mark the macrotiles satisfying all the requirements.
\begin{figure}
  \begin{center}
    \includegraphics[scale=.5]{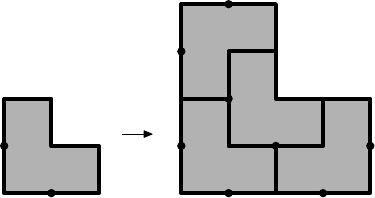}
  \end{center}
  \caption{Chair Substitution. The prototiles are four non-convex octagons.
    The figure shows one of them, and the other three are obtained by rotating  by 90, 180 and 270 degrees and
    are decomposed similarly.}
  \label{f222}
\end{figure}
However, for the square of this substitution this is already possible.
For one of the four macrotiles, the markup is shown in Fig.~\ref{fr6} on the left.
For the other three, one can proceed similarly, keeping in mind
that it is necessary to select as main port matching sides, for example,
the second side from the left
on the respective macroside (the second from the bottom --- for vertical macrosides).
This will ensure R6.
Requirement R\ref{r4} is satisfied, since the names of all sides are different.
$2/3$ Requirement is satisfied, since all tiles have eight sides and each non-central
type has at most $4<(2/3)\cdot 8$ net sides.

\subsection{Joining  tiles}

The second obstacle is the $2/3$ Requirement.
For example, if all prototiles are triangular, then it is impossible to fulfill.
This obstacle can be bypassed by joining  some tiles into larger ones.
We will demonstrate this with two examples.
As a first example, consider the substitution in Fig.~\ref{fer9}.
\begin{figure}
  \begin{center}
    \includegraphics[scale=.7]{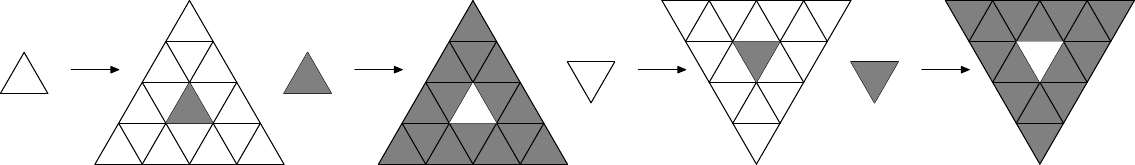}
  \end{center}
  \caption{Substitution for triangular tiles.}\label{fer9}
\end{figure}
In any side-to-side tiling of the plane, each tile facing up is adjacent to the right by a tile facing down.
We join them into a parallelogram.
The resulting tiling will be defined by a substitution with four rules,
one of which is shown in Fig.~\ref{fer10}, and the other three are similar.
\begin{figure}
  \begin{center}
    \includegraphics[scale=.7]{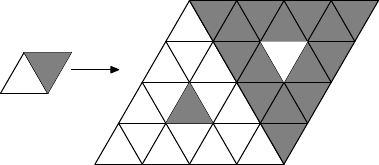}
  \end{center}
  \caption{One of the four rules for the substitution obtained by joining two triangular tiles into a parallelogram.}
  \label{fer10}
\end{figure}
The new substitution can be easily labeled so that all conditions R1--R8 are satisfied.
Therefore, both the family of hierarchical tilings and the family of substitution tilings are sofic for it.
This implies that both families are sofic for the original substitution as well.
Indeed, tilings with the original tiles are obtained by cutting each tile into two parts.
The cutting side must be labeled with some unique color, which will force triangles to assemble into parallelograms.

As a second example, consider Robinson's Stone Inflation \cite{gsh} (Fig.~\ref{prt11}).\footnote{For this substitution,
the families of substitution and hierarchical tilings coincide and can be defined
by well-known simple local rules: red and purple circles in adjacent tiles must continue each other.
We give this example not to derive new local rules for this family,
but only to demonstrate the applicability of the main theorem.}
The macrotiles for this substitution are too small and also consist of triangles.
For both reasons, they cannot be labeled to satisfy R1--R8.
\begin{figure}
  \begin{center}
    \includegraphics[scale=1]{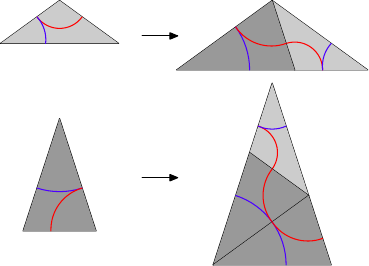}
  \end{center}
  \caption{Robinson's Stone Inflation. Two of the 40 prototiles in this set are shown on the left.
    The remaining 38 are obtained by flipping and rotating by angles that are multiples of 36$^\circ$.
    On the right are their decompositions under the substitution.
    The purple and red circles are drawn to make the tiles asymmetrical
    (one circle on each tile would be enough for this,
    but two cycles  will be more convenient).}\label{prt11}
\end{figure}
Let us consider, however, the sixth power of this substitution.
Two macrotiles for the resulting substitution are shown in Fig.~\ref{prt3}.
\begin{figure}
  \begin{center}
    \includegraphics[scale=.5]{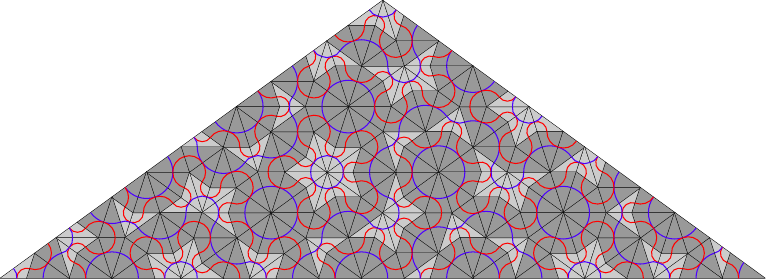}
    \includegraphics[scale=.5]{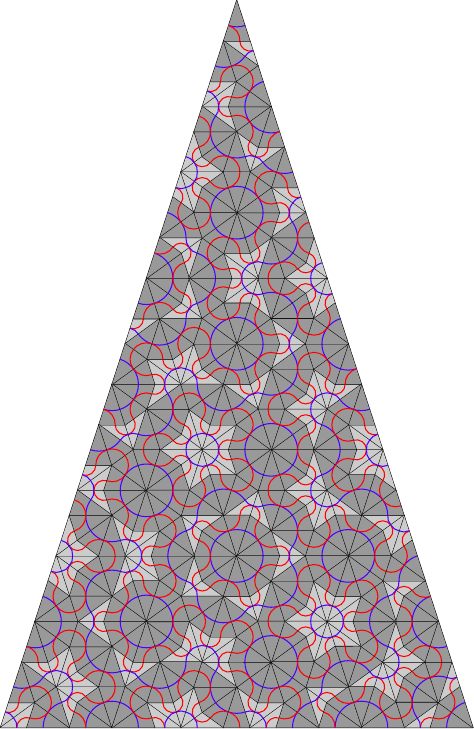}
  \end{center}
  \caption{Macrotiles for the sixth power of Robinson's Stone Inflation.}\label{prt3}
\end{figure}
We group the tiles in these macrotiles into purple and blue rhombuses, as shown in
Fig.~\ref{prt7}.\footnote{
If we tile the plane with them so that the red and purple circles are continuations of each other,
we obtain the family of 
Penrose tilings P2 \cite{gsh}.}
Some tiles will be outside the groups, they are painted white. As a result, we obtain tilings with 80 prototiles: 40 original tiles, 20 blue rhombuses and 20 purple rhombuses
(like triangles, rhombuses have two orientations).
We now extend the substitution to the rhombuses as follows:
first, small rhombuses are cut into two triangles and large ones into four,
then the original substitution is applied to each triangle and then in the resulting macrotile
triangles are again combined into rhombuses.
As a result, we obtain a substitution acting on 80 prototiles.
The resulting macrotiles can now be easily labeled,
satisfying   R1--R8. This labeling is shown in Figs.~\ref{prt7} and~\ref{prt15}.
Therefore, the families of hierarchical and substitution tilings  are sofic for the resulting substitution,
and so are the families of hierarchical and substitution tilings
associated with the original substitution.
\begin{figure}
  \begin{center}
    \includegraphics[scale=.7]{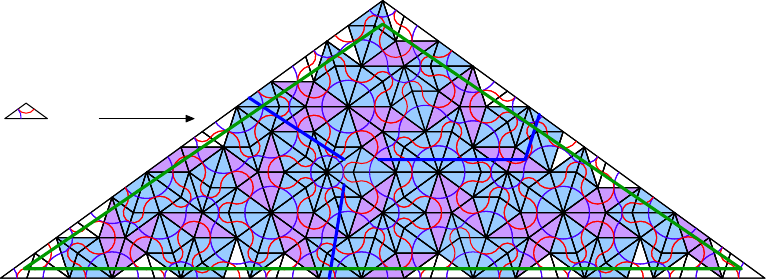}
    \includegraphics[scale=.7]{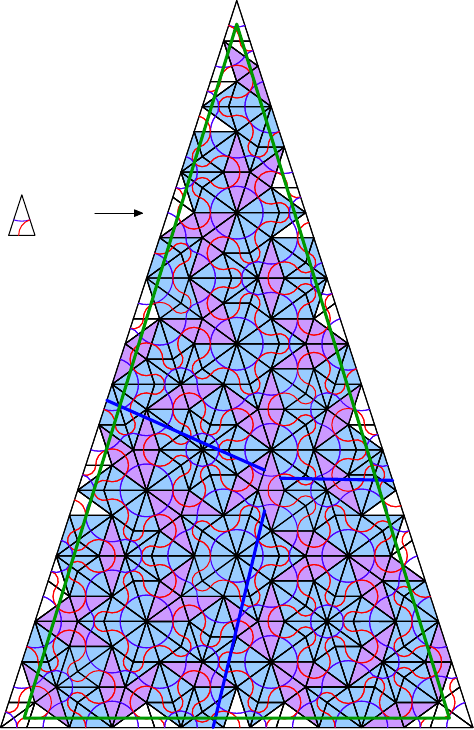}
  \end{center}
  \caption{Labeling of two macrotiles for the sixth power of Robinson's Stone Inflation. }\label{prt7}
\end{figure}
\begin{figure}
  \begin{center}
    \includegraphics[scale=.45]{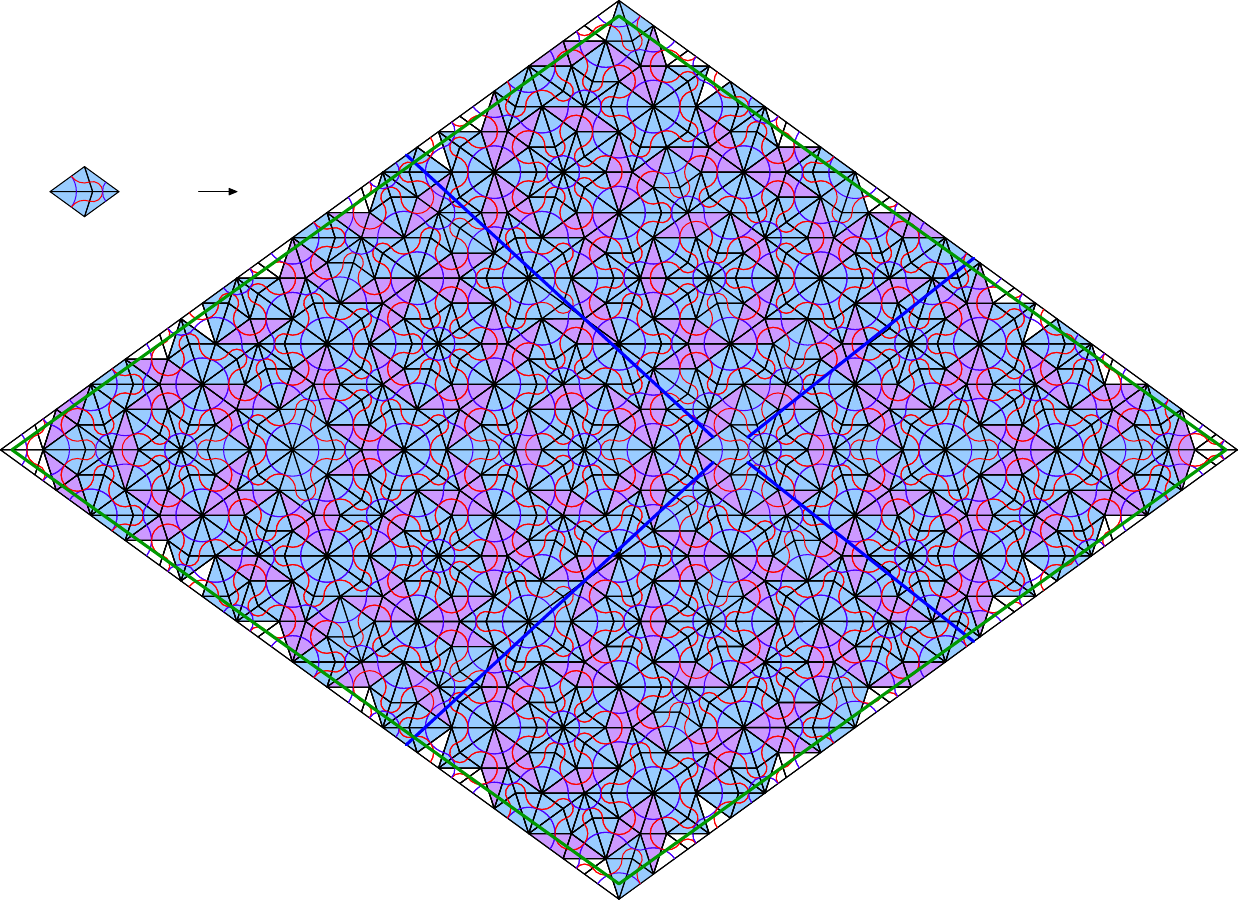}
    \includegraphics[scale=.45]{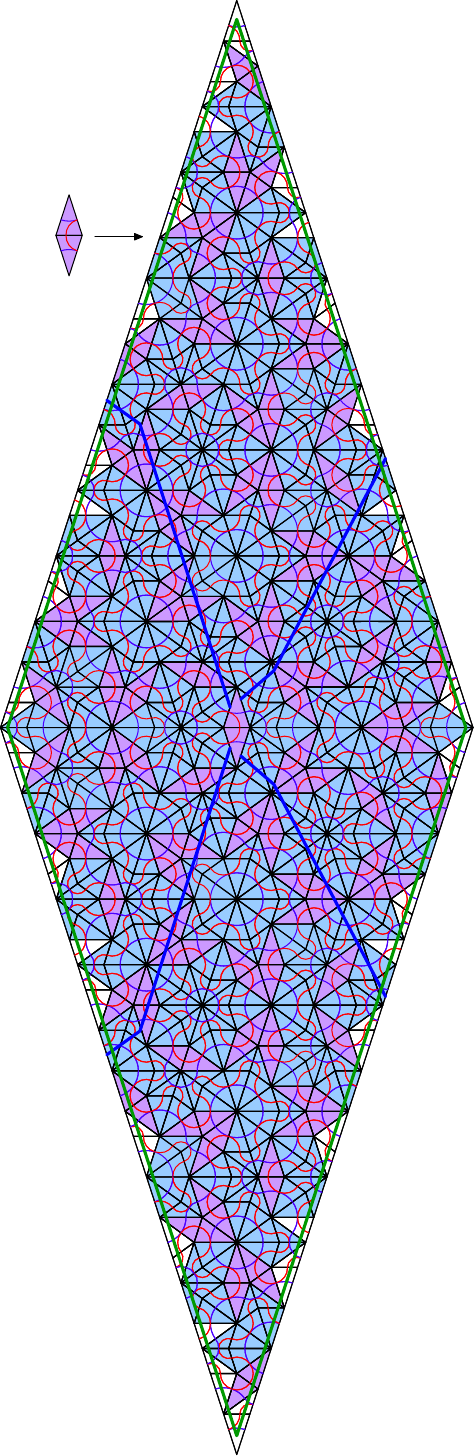}
  \end{center}
  \caption{Labeling of the other two macrotiles for the sixth power of Robinson's Stone Inflation.
    The names of the sides of all macrotiles are different with the following exceptions.
    The central tile in the fourth macrotile (the small rhombus) is oriented in the opposite direction than the parent tile.
    Therefore, the names of the sides of two small rhombuses with different orientations are the same;
    this could be avoided by choosing a different central tile, but then the picture would not be as nice.
    The second exception is that the sides of the triangular tiles have the same names as the sides of the central tiles
    in their decompositions under the substitution.
    Both exceptions do not violate the Names Requirement,
    since the angle between the eponymous sides is different from $180^{\circ}$. }\label{prt15}
\end{figure}

Thus from the main theorem  we obtain the following corollary.
\begin{theorem}
  The families of hierarchical and substitution tilings corresponding to the substitutions in Figs.~\ref{f13}, \ref{f14}, \ref{f55}, \ref{f222}, and~\ref{fer9}   are sofic.
\end{theorem}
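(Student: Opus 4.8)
The plan is to obtain the statement as a direct corollary of the Main Theorem (Theorem~\ref{th-main}): for each of the five substitutions listed, I would verify the hypothesis R0 and exhibit a markup --- of the substitution itself, or of a small power of it --- satisfying R1--R8, and then invoke Theorem~\ref{th-main}. In the triangular case one extra step is needed, transferring the conclusion from an auxiliary substitution via a ``cutting'' argument. Much of the required checking has effectively been prepared in Sections~3 and~5, so the proof is mostly a matter of assembling it.

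\emph{The substitutions of Figs.~\ref{f13},~\ref{f14} and~\ref{f55}.} Each of them satisfies R0: in every rule the children tile the stretched prototile side-to-side, and this property is inherited under further decompositions, so all $\tau$-supertiles are side-to-side tilings (hence, by Lemma~\ref{th11}, every $\tau$-substitution tiling is $\tau$-hierarchical). No power of the substitution is needed here: a markup of the substitution itself satisfying R1--R8 is available in each case --- the one drawn in Fig.~\ref{f7} for the third substitution, the one drawn in Fig.~\ref{f55} for the fourth, and the analogous choice for the second --- all obtained by the same recipe: the central tile is taken in the interior, the net paths are routed so as to touch neither triangular nor border sides, and on each macroside the main ports are chosen among the outer net sides so that two macrotiles joined macroside-to-macroside always meet main port to main port. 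For each such markup the verification of R1--R8 is finite and routine: R1, R2 and R\ref{r7} hold by the choice of the central tile and of the net paths; R3 because each macroside carries an outer non-net side; R\ref{r5} because the chosen main ports have no common vertex; R\ref{r3} by running through the finitely many side-to-side, macroside-to-macroside connections of pairs of macrotiles; R\ref{r4} because the relevant side names are distinct (equivalently, the inward normals to eponymous sides never form an angle of $180^\circ$); and $2/3$ Requirement (R\ref{r8}) by a count of net sides. Theorem~\ref{th-main} then yields soficity of the hierarchical and substitution families for each of these three substitutions.

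\emph{The Chair Substitution (Fig.~\ref{f222}) and the triangular substitution (Fig.~\ref{fer9}).} Both satisfy R0 for the same reason. The Chair Substitution itself admits no good markup, but its square does, as in Section~5.1: one of the four $\tau^2$-macrotiles is marked up as in Fig.~\ref{fr6} (left), and the other three by rotating this markup through $90^\circ$, $180^\circ$, $270^\circ$, with the main ports chosen consistently (for instance the second side from the appropriate end of each macroside) so that R\ref{r3} holds. Since all prototiles are octagons and each non-central type has at most $4<\tfrac23\cdot 8$ net sides, $2/3$ Requirement holds; R\ref{r4} holds because all side names are distinct; the other conditions are checked as above. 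So Theorem~\ref{th-main}, applied to $\sigma=\tau^2$, gives soficity of both families for the Chair Substitution. The triangular substitution cannot satisfy $2/3$ Requirement in any power (in a side-to-side tiling by triangles every tile shares a third of its sides with a neighbour), so here, following Section~5.2, I would first join each upward triangle with the downward triangle to its right into a parallelogram, obtaining a four-rule substitution $\hat\tau$ on parallelogram prototiles, one rule of which appears in Fig.~\ref{fer10}. The macrotiles of $\hat\tau$ are easily marked up to satisfy R1--R8 (they are parallelograms, there is ample room for the net, $2/3$ Requirement now holds, and the main ports can be matched), so Theorem~\ref{th-main} makes the hierarchical and substitution families of $\hat\tau$ sofic. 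Finally I would push this back to $\tau$: a side-to-side tiling by the original triangles is precisely what one obtains from a $\hat\tau$-tiling by cutting each parallelogram along the diagonal separating its two constituent triangles, and, conversely, decorating that diagonal side of each triangular prototile with one fixed fresh color (the same on both triangles of a parallelogram) forces the triangles of any proper tiling to reassemble into parallelograms; superimposing this cut on a soficity-witnessing decoration of $\hat\tau$ produces one for $\tau$.

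\emph{Where the difficulty lies.} All the conceptual content is already contained in Theorem~\ref{th-main} and in the two tricks (taking a power; joining tiles), so what remains is bookkeeping. The most laborious part is the per-substitution verification of R1--R8, and above all the matching-main-ports condition R\ref{r3}, which forces one to inspect every side-to-side connection of two macrotiles along a shared macroside and confirm that main ports there always meet main ports. The only step requiring genuine care beyond such bookkeeping is the last one: checking that cutting each parallelogram into two triangles --- that is, refining the tile set and adding a single color along the cut --- really does preserve soficity of both families, so that the conclusion for $\hat\tau$ transfers to the substitution of Fig.~\ref{fer9}.
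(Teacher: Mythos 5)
Your proposal is correct and follows essentially the same route as the paper: Section~5 establishes the theorem exactly by exhibiting markups satisfying R1--R8 (directly for the substitutions of Figs.~\ref{f13}, \ref{f14}, \ref{f55}, via the square for the Chair Substitution with the markup of Fig.~\ref{fr6}, and via joining triangles into parallelograms for Fig.~\ref{fer9}), invoking Theorem~\ref{th-main}, and transferring soficity back to the triangular substitution by cutting each parallelogram and marking the cut side with a unique color. Your write-up just makes the R0 check and the final cut-and-transfer step slightly more explicit than the paper does.
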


\appendix
\section{Appendix}
We discuss here this technique  in more detail. Namely, we explain why some of the assumptions on substitution
cannot be omitted.
In more detail, we prove the following statements:
\begin{itemize}
\item The statement of Dichotomy Lemma can be weakened in the following sense:
if a substitution $\tau$ satisfies R0 and some its power $\sigma$  has a markup satisfying R1--R6 and  the
substitution $\sigma'$ defined above satisfies   the Weak Dichotomy Lemma,
then both families of $\tau$-hierarchical and $\tau$-substitution tilings are sofic.

\item   This technique cannot work without  the Weak Dichotomy Lemma: if the lemma does not hold,
 then item  (d) of the Composition Lemma becomes false: there is an illegal tile 
 whose decomposition consists entirely of legal tiles.
\item For the Weak Dichotomy Lemma to hold, $2/3$ Requirement cannot be omitted.
\item The Weak Dichotomy Lemma cannot be replaced by the following simpler statement:
  \emph{ any two legal tiles of different non-central types have distinct blue contours}.
\item For central acyclic types the item (d)  in the Composition Lemma can
be false.
\end{itemize}

\subsection{Weak Dichotomy Lemma}


\begin{itemize}\item[]
  \textbf{Weak Dichotomy Lemma.}
  \emph{Let a legal tile $D$ and a non-central or central cyclic type $t$
of the same form as $D$ be given. Assume that 
 for every border type $s$  in $\sigma D$
 there exists a legal tile of type $t$ similar to $D$ on $z$th  side for all $z$ such that  the path $P_{z}$
  contains $s$. Then $D$ is similar to some legal tile of type $t$. }
\end{itemize}
Together with Conditions R1--R6 this lemma is sufficient for Theorem~\ref{th-main}.
This is proved in the same way as before, only item (c) of the Composition Lemma should be reformulated as follows:\\
\emph{For any border type $s$ there is a legal tile of type $t$ similar to $D$ on
 $z$th side for all $z$
such that the path $P_{z}$ contains $s$}.\\
This statement is proved in the same way as before.

\subsection{Necessity of the Weak Dichotomy Lemma}
We now show that if the Weak Dichotomy Lemma fails, then item (d) of the Composition Lemma fails.
Indeed, assume that the Weak Dichotomy Lemma fails. That is, there exists a legal tile $D$ and a non-central or central acyclic type $t$
such that\\ 
(1) $t$ and $D$ are of the same form,\\ 
(2) For any border type $s$ there is a legal tile of type $t$ similar to $D$
 on $z$th side for all $z$
such that the path $P_{z}$ contains $s$, and\\ 
(3) $D$ is not similar to any legal tile of type $t$.\\
Then consider the following normal tile $D'$:
its central index is $t$ and it has the same blue-green contour as $D$.

By item (3) this tile $D'$ is illegal. However,
the decomposition of $D'$ consists of legal tiles.
Indeed, by Observation~\ref{rem2},  all non-border tiles in the decomposition of
$D'$ are children of $D$,
and so are legal. Now let $B$ be a tile in the decomposition of $D'$ of a border type $s$.
By (2) there exists a legal tile $C$ of type $t$ that is similar to $D$ and hence to $D'$
on $z$th side for all $z$
such that the path $P_{z}$ contains $s$.
By Observation~\ref{rem2}, $B$ is the child of $C$ and hence is legal.

\subsection{Necessity of $2/3$ Requirement}\label{sa2}

One of the key assumptions required by the Dichotomy Lemma is $2/3$ Requirement or its weak version~R8'.
Here is an example of a substitution $\sigma$ and its labeling for which all requirements except these two are satisfied,
but the Weak Dichotomy Lemma is not (for some $D,t$). As shown in Section A.2, for such $\sigma$
there exists an illegal normal tile $D'$ of type $t$ in whose $\sigma'$-decomposition all tiles are legal.

The substitution $\sigma$ is similar to the substitution in Fig.~\ref{f7}.
The difference is that four more squares in the decomposition of the triangular tile are cut into triangles;
the network paths pass through triangles eight times (see Fig.~\ref{fi7}).
\begin{figure}
  \begin{center}
    \includegraphics[scale=.8]{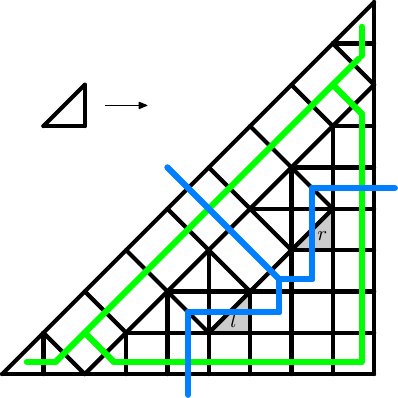}
  \end{center}
  \caption{The markup  of the triangular macrotile that does not satisfy the Weak Dichotomy Lemma,
    but satisfies R1--R7.
    The pair of triangular types $\pair{r,l}$ for which R8'
    is not satisfied is highlighted in gray5.} \label{fi7}
\end{figure}
Namely, condition~R8' is not satisfied for the pair of types $\pair{ r,l}$ highlighted in gray in Fig.~\ref{fi7}. The first condition in requirement R8' is not satisfied, as 
these types are of the same form.
The only non-net side of tile $r$ is the vertical leg, and it belongs to the net in tile $l$.
Therefore, the second condition in requirement R8' is not satisfied.
On the other hand, tile $l$ has only two net sides, of which the hypotenuse belongs to the net in tile $r$ as well,
so the third condition in requirement R8' is not satisfied either.

Let us now show that the Weak Dichotomy Lemma is false for this substitution.
Let $\alpha$ denote the prototile on the left in Fig.~\ref{f35}.
Let $\pair{a,0}$ denote the native blue-green index on horizontal leg of legal tiles of type $l$, and
$\pair{b,0}$  the native blue-green index on vertical  leg of legal tiles of type $r$.
Let us prove that both blue-green indices $\pair{a,0}$, $\pair{b,0}$ occur on both legs of legal triangular tiles of all four forms (but not all types).
When applying the substitution, the blue-green index from the vertical leg of a triangular tile goes to the vertical leg of some
triangular tile rotated by $90^\circ$ (counterclockwise).
Thus, the same blue-green indices occur on the vertical legs of triangular tiles of all four forms.
Similarly, the same is true for horizontal legs.
In addition, when applying the substitution, the blue-green index from the vertical leg goes to the horizontal leg of some
tile and vice versa. Consequently, both blue-green  indices $\pair{a,0}$, $\pair{b,0}$ occur on all legs of legal triangular tiles of all four forms.

Consider the tiles shown in Fig.~\ref{f35}.
\begin{figure}
  \begin{center}
    \includegraphics[scale=1]{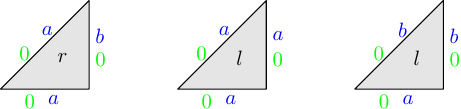}
  \end{center}
  \caption{The legal  tile $D$ of type $r$ (on the left) is similar on each side to one of two legal
  tiles of type $l$.}
  \label{f35}
\end{figure}
All three tiles are legal: the first one is the child of type $r$ of any legal
tile of the form $\alpha$ with blue-green index $\pair{a,0}$ on the vertical leg,
the second one is the child of type $l$ of any legal
tile of the form $\alpha$ with blue-green index $\pair{a,0}$  on the horizontal leg,
and the third one is the child of type $l$ of any legal
tile of the form $\alpha$ with blue-green index $\pair{b,0}$ on the horizontal leg.

Let us take the first tile in Fig.~\ref{f35} as the tile $D$ and $l$ as the type $t$.
Then $D$ is similar on each side to some tile of type $l$.
More precisely, it is similar on the hypotenuse to the second tile,
on the vertical leg to the third tile, and on the horizontal leg to both tiles.
On the other hand, no legal tile of type $l$ can have the same blue-green contour as $D$,
since the blue indices on the hypotenuse and on the vertical leg are different.

This example also shows that the Weak Dichotomy Lemma is not implied  by the following simpler statement:
\emph{any two legal tiles of different non-central types have distinct blue contours}.
Indeed, for this substitution this simpler statement is true.
We will prove this, say, for types $l$ and $r$; the general case is similar to this one. 

Indeed, assume that the blue contours of a
tile of type $l$ and of a tile of type $r$ coincide.
Tiles of type $l$  have identical blue indices on the hypotenuse and on the vertical leg,
and tiles of type $r$ have identical blue indices on the hypotenuse and on the horizontal leg.
Therefore, all three indices are the same for both tiles. However, this is impossible because the first and second tiles
have one native index each and they are different.

\subsection{Item (d) of Composition Lemma 
May Be False for  Central Acyclic Types}\label{sa3}

Consider the substitution in Fig.~\ref{f355}, the central types are located in the middle of the macrotiles.
Let $t$ be the central type of the second macrotile
and $D$ any legal tile in the upper left corner of the second macrotile
(see Fig.~\ref{f355}).
Note that $t$ is an acyclic type.
\begin{figure}
  \begin{center}
    \includegraphics[scale= 1]{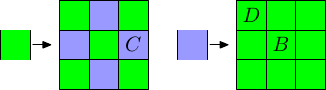}
  \end{center}
  \caption{It can be proved that $B$ cannot have the same blue-green contour as tile $D$.
    However, knowing only $D$ but not knowing the contour of $B$, we cannot find the side on which their blue-green indices differ.}
  \label{f355}
\end{figure}
Let $B$ be a  legal tile of type $t$.
Given the tile $D$ but not knowing the type of the tile from which $B$ inherits its blue-green contour,
we cannot find a side on which tiles $D$ and $B$ have different blue-green index.

We claim that for each side of $D$ there is a legal  type-$t$ tile which is similar to $D$ on that side.
Indeed, each of the four blue-green indices of $D$ can transferred to the eponymous side of a legal type-$t$
tile via one of the blue tiles.
For example, when the substitution is applied, the blue-green index on the east side of $D$ is inherited by the tile $C$,
as its west and east indices. When the substitution is applied again,
the east index of $C$ becomes the east index of $B$.
Thus, on each side the tile $D$ is similar to a  legal tile of type $t$.

However, no legal tile of type $t$ is similar to $D$,
since every legal tile of type $t$ inherits its blue-green contour from some legal blue tile,
and every legal blue tile is not similar to $D$.

Therefore, the Weak Dichotomy Lemma is false for type $t$ and tile $D$.
As shown in Section A.2,
there exists an illegal normal tile $D'$ of type $t$ in whose decomposition all tiles are legal.
Thus item (d) of the Composition Lemma is false for $\sigma D'$.
For this reason, we had to consider this case separately.
The sketch of proof presented in~\cite{fo} ignores this problem. 

\end{document}